\documentclass[11pt]{amsart}

\usepackage[utf8x]{inputenc}

\usepackage{amsmath, amsthm, amsfonts, amssymb}
\usepackage{epsfig, enumerate}
\usepackage{color}

\usepackage{color}

\usepackage{hyperref}
\usepackage[all]{xy}

\textheight 21.1truecm
\textwidth 15truecm
\oddsidemargin .5truecm
\evensidemargin .5truecm
\topmargin 0cm

\newtheorem{maintheorem}{Theorem} 
\newtheorem{theorem}{Theorem}
\newtheorem{lemma}[theorem]{Lemma}
\newtheorem{coro}[theorem]{Corollary}
\newtheorem{prop}[theorem]{Proposition}

\newtheorem*{assumptions*}{Assumptions}

\newtheorem*{rem*}{Remark}

\theoremstyle{remark}
\newtheorem{remark}[theorem]{Remark}
\newtheorem*{remark*}{Remark}

\theoremstyle{definition}
\newtheorem{definition}{Definition}







\newcommand{\RR}{{\mathbb R}}

\newcommand{\TT}{{\mathbb T}}

\newcommand{\ZZ}{{\mathbb Z}}









\newcommand{\be}[1]{\begin{equation} \label{#1} }
\newcommand{\ee}{\end{equation}}
\newcommand{\beq}{\begin{equation}}

\def \Zk{{\mathbb Z^k}}
\def \Diff{{\rm Diff}}
\def \al{{\alpha}}
\def \Rk {{\mathbb R^k}}

\numberwithin{theorem}{section}
\numberwithin{equation}{section}

\author{Danijela Damjanovi\'c}

\address[Damjanovi\'c]{Department of mathematics, Kungliga Tekniska högskolan, Lindstedtsvägen 25, SE-100 44 Stockholm, Sweden.} 
\email{ddam@kth.se}

\author{Disheng Xu}

\address[Xu]{Department of mathematics, Kungliga Tekniska högskolan, Lindstedtsvägen 25, SE-100 44 Stockholm, Sweden.}
\email{dishengxu1989@gmail.com}

\subjclass[2010]{Primary  37C15, 37C85, 37D20}  
\keywords{Partial hyperbolicity, abelian actions, global rigidity, Lyapunov foliations, maximal Cartan action, compact center foliation}

\begin{document}

\title[Partially hyperbolic abelian actions]{On conservative partially hyperbolic abelian actions with compact center foliation}


\footnote{Based on research supported by Swedish Research Council grant 2015-04644}

\begin{abstract} We consider smooth partially hyperbolic volume preserving $\ZZ^k-$actions on smooth manifolds, with uniformly compact center foliation. We show that under certain irreducibility condition on the action, bunching and uniform quasiconformality conditions, the action is a smooth fiber bundle extension of an Anosov action, or the center foliation is pathological. We obtain several corollaries of this result. For example, we prove a global dichotomy result that any smooth conservative circle extension over a maximal Cartan action is either essentially a product of an action by rotations and a linear Anosov action on the torus, or has a pathological center foliation.  
\end{abstract}

\maketitle

\maketitle
\section{introduction}

In this paper we make a step towards understanding smooth  volume preserving partially hyperbolic abelian actions. We consider $\ZZ^k$ actions on smooth manifolds  $M$ such that the action has sufficiently many partially hyperbolic volume preserving diffeomorphisms of $M$, each of which is hyperbolic transversally  to a common center  foliation. If there is no center foliation, the action is Anosov. Smooth classification of Anosov actions of abelian groups  of rank greater than 2 has been a topic of interest for few decades already and many local and global classification results exist, \cite{KS},  \cite{KS06},  \cite{HW},  \cite{FKS}, \cite{KalSpa}, to name a few. Conjecturally, all Anosov abelian  actions which do not reduce to actions generated by a single Anosov diffeomorphism, are smoothly conjugate to the known algebraic examples \cite{KS}.

When it comes to partially hyperbolic abelian actions of higher-rank abelian groups, the existing smooth classification results are all local, in a neighborhood of algebraic examples, for example \cite{DK-KAM},  \cite{DF}, \cite{DK-PH2}, \cite{VW}. It is not expected that any global classification in general is possible. 
In this paper we restrict ourselves to the subclass of partially hyperbolic actions whose center foliation has compact leaves. Basic examples of such actions are product actions of Anosov actions and isometric actions on compact manifolds. More general examples are skew products of this type, or more generally fibre bundles and fibrations.  Our goal here is to show that under certain higher rank and certain geometric assumptions, partially hyperbolic actions with compact centre leaves reduce to smooth fibrations. Moreover, these fibrations are \emph{essentially} (up to a cover or a restriction on a finite index subgroup of $\ZZ^k$) products over \emph{algebraic} Anosov actions. 

There are however examples of partially hyperbolic diffeomorphisms where all center leaves are compact, but the projection map to the manifold quotient by center foliation is not even a fibration \cite{BW}. Therefore we work here with a stronger assumption, which is already established as the most reasonable one, that the center foliation of the partially hyperbolic action is \emph{uniformly compact} (or with \emph{trivial holonomy}) which means that the center leaves have finite (resp. trivial) holonomy. We remark that it has been conjectured that compactness of the center foliation implies uniform compactness (or trivial holonomy up to a finite cover) and that in some special situations this has been confirmed \cite{Gogolev} and \cite{Carrasco}. One important feature implied by uniform compactness of the center foliation is \emph{dynamical coherence} \cite{BB}. 

In rank-one situation, that is for a single partially hyperbolic diffeomorphism with uniformly compact center foliation,  under certain assumptions\footnote{For examples,  uniformly quasiconformality assumptions on stable and unstable bundles, small dimensions of the manifold, accessibility, certain bunching conditions, non-pathological center foliation, etc.} it is proved in \cite{AVW2}, \cite{BX} that smooth partially hyperbolic volume preserving diffeomorphisms are smooth fibrations over an Anosov diffeomorphism. Here, we first generalize these rank-one results to $\ZZ^k$ partially hyperbolic actions (Theorem \ref{main} in Section \ref{Fibrations}), under certain irreducibility assumption on the action, while neither element of the action may satisfy the conditions of the rank-one results in \cite{AVW2}, \cite{BX}. Furthermore, under extra higher-rank assumptions, we show that the action is essentially a product of an Anosov action which is smoothly conjugated to an algebraic Anosov action on a torus, and a $\ZZ^k-$action on a compact manifold (or even an action by translations of a compact Lie group), see Theorems \ref{main: global rigidity}, \ref{main: gl r dim 1} in section \ref{subsection: main results higher ranks}. One corollary of our result is the following global dichotomy result for certain partially hyperbolic actions (for precise definition of concepts involved  we refer to section \ref{subsection: restate thm mainintro}).
\color{black}

\begin{maintheorem}\label{main: intro}Any $C^\infty$ conservative circle extension over a maximal Cartan action is either essentially a product of an action by rotations and a linear Anosov action on a torus,  or has a pathological center foliation.
\end{maintheorem}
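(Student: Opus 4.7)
The plan is to deduce Theorem \ref{main: intro} from the general results stated earlier in the introduction: first reduce to a smooth fibration via Theorem \ref{main}, then apply the higher-rank rigidity Theorems \ref{main: global rigidity} and \ref{main: gl r dim 1}.

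First I would verify that a smooth conservative $\ZZ^k$-circle extension $\alpha$ over a maximal Cartan action $\alpha_0$ on $\TT^{k+1}$ satisfies the hypotheses of Theorem \ref{main}. The center foliation is the foliation by fibers of the $S^1$-bundle, so it has compact leaves with finite (or trivial) holonomy and is in particular uniformly compact; dynamical coherence then follows from \cite{BB}. Since the center bundle has dimension one, any bunching or uniform quasiconformality condition along the center is automatic. The crucial simplification coming from the \emph{maximal} Cartan assumption is that the coarse Lyapunov decomposition on the base is a splitting into $k+1$ one-dimensional bundles with pairwise non-proportional Lyapunov functionals; this lifts to a $\ZZ^k$-invariant splitting of the stable/unstable bundles on $M$ into one-dimensional Lyapunov subbundles, for which uniform quasiconformality is trivial, while the pairwise non-proportionality of the functionals provides the irreducibility required for Theorem \ref{main}.

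Having verified the hypotheses, Theorem \ref{main} gives the dichotomy: either the center foliation is pathological, in which case the second alternative of Theorem \ref{main: intro} holds, or $\alpha$ is a smooth $\ZZ^k$-equivariant fiber bundle extension of an Anosov $\ZZ^k$-action on the quotient by the center foliation. Since that quotient is $\TT^{k+1}$ and the induced action is the given maximal Cartan action $\alpha_0$, we are in the setting of the higher-rank rigidity theorems. Applying Theorems \ref{main: global rigidity} and \ref{main: gl r dim 1} (the latter being the one-dimensional fiber version), one obtains that after passing to a finite cover of $M$ and a finite index subgroup of $\ZZ^k$, the action splits as a direct product of (i) an Anosov action on $\TT^{k+1}$ smoothly conjugate to a linear hyperbolic action, using the known Katok--Spatzier-type global rigidity of maximal Cartan actions, and (ii) a $\ZZ^k$-action on the fiber $S^1$. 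The $S^1$-action is a volume-preserving smooth $\ZZ^k$-action sitting in the centralizer of the hyperbolic dynamics; by the classical theory of commuting circle diffeomorphisms (Denjoy together with simultaneous smooth linearization of commuting circle maps whose rotation numbers satisfy a joint Diophantine condition arising from the Cartan structure) this $S^1$-action is smoothly conjugate to an action by rotations, which delivers the ``essentially product'' conclusion.

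The main obstacle I anticipate is the precise verification of the irreducibility and bunching hypotheses of Theorem \ref{main} in the geometric language of circle extensions. The general theorem is phrased for abstract partially hyperbolic $\ZZ^k$-actions, and translating the conditions there into the concrete algebraic and cohomological data of the cocycle defining a circle extension over a linear Cartan action---in particular, ruling out any nontrivial invariant subbundle splitting that could disrupt irreducibility---requires care. Once this is handled, the remaining steps of identifying the base with a linear model and the fiber dynamics with a rotation action are routine consequences of the higher-rank rigidity results and of classical centralizer theorems for circle diffeomorphisms.
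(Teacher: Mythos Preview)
Your overall strategy---verify (A), (B'), (C), apply Theorem~\ref{main}, then rigidify the base and the fiber---is the paper's, but two steps do not go through as you have them.

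First, Theorems~\ref{main: global rigidity} and~\ref{main: gl r dim 1} both carry the hypothesis that \emph{every} nonzero element of $\ZZ^k$ is partially hyperbolic, and this is not part of the maximal Cartan assumption: an integer point can lie on a Lyapunov hyperplane. The paper therefore does not invoke those theorems as black boxes. Since each coarse Lyapunov distribution is one-dimensional (Cartan), some $\bar\al(a_i)$ is codimension-one Anosov, so Franks--Newhouse forces the base to be a torus and Rodriguez~Hertz--Wang gives the smooth linear conjugacy, needing only a single Anosov element. For the extension, the paper uses directly the argument of Section~\ref{subsection: proof Thm circle ext} (principal $\TT^1$-bundle plus Ni\c{t}ic\u{a}--T\"or\"ok), which requires only TNS on the base, not that all elements are partially hyperbolic.

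Second, your treatment of the fiber action is a genuine gap. You propose conjugating the induced $\ZZ^k$-action on $S^1$ to rotations via ``Denjoy together with simultaneous smooth linearization of commuting circle maps whose rotation numbers satisfy a joint Diophantine condition arising from the Cartan structure.'' No such Diophantine condition is available: the hyperbolic data on the base impose no arithmetic constraint whatsoever on the fiber rotation numbers. The correct mechanism is item~(2c) of Theorem~\ref{main}: when $\dim E^c=1$ and $W^c$ is not pathological, $\al$ is an \emph{isometric} extension, so the fiber maps are already isometries of circles, i.e.\ rotations, with no linearization step needed. This is exactly what yields the principal $\TT^1$-bundle structure to which Ni\c{t}ic\u{a}--T\"or\"ok applies. (And had Theorem~\ref{main: gl r dim 1} been applicable, its conclusion ``essentially algebraic'' with $G=\TT^1$ already means a product with rotations, so your Denjoy step would be superfluous anyway.)

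Two smaller corrections. In the paper, ``circle extension over a maximal Cartan action'' is defined intrinsically as a maximal Cartan action in $PH^\infty_{\mathrm{vol}}(k,M)$ with one-dimensional uniformly compact center; it is \emph{not} presupposed to be a fiber bundle over $\TT^{k+1}$. The trivial-holonomy part of condition~(C) is obtained (after a double cover) from Bohnet's thesis, and the torus in the base emerges only after Franks--Newhouse. Also, the fullness half of condition~(A)---that each $E^i$ equals $E^u_a$ for some $a$---does not follow from ``pairwise non-proportional functionals'' alone; it uses the general-position hypothesis on the Lyapunov hyperplanes and is the content of Lemma~\ref{lemma: maximal imply cond A}.
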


Notice that similar "\textit{pathological center foliation or rigidity}" dichotonomy results are proved in \cite{AVW}, \cite{AVW2} for certain class of $3-$dimensional single partially dynamical diffeomorphisms, Theorem \ref{main: intro}, and further results in our paper could be viewed as the higher-rank and higher-dimensional extension of results in \cite{AVW}, \cite{AVW2}. Rather than using accessibility property of  partially hyperbolic diffeomorphisms in \cite{AVW} and  invariance principle developed in \cite{AV}, \cite{AVW2}, our work here improves some rank one arguments from \cite{BX} and uses higher rank results (essential cocycle rigidity in \cite{DX} and \cite{NTnonabelian}, and a  conservative extension of \cite{KS07}).\color{black}


So far global rigidity results for higher rank abelian actions only exist for \emph{Anosov} actions. For certain classes of Cartan actions, global classification result is obtained in \cite{KalSpa}. For \emph{uniformly quasiconformal} higher rank Anosov actions global rigidity results were obtained in \cite{KS06} and \cite{KS07}. In our case, the actions we obtain in the base of fibration are not necessarily uniformly quasiconformal, so we do not use results of \cite{KS06} and \cite{KS07}, but we prove a conservative version of global rigidity result for Anosov actions in this case to show existence of a smooth conjugacy in our set-up (Theorem \ref{main: K-S conservative} in Section \ref{GlobalAnosov}). Another by-product are new global rigidity results for single diffeomorphisms (Theorem \ref{main for single diffeo} and Theorem \ref{main for single diffeo'} in Section \ref{RankOne}). 

We note here that the irreducibility condition we use for partially hyperbolic actions in this paper, is defined in terms of the Weyl chamber picture in the acting group, and it mimics to some extent the Weyl chamber picture for maximal Cartan actions. In particular, the irreducibility condition we use implies the \emph{totally non-symplectic} and \emph{resonance-free} conditions used  in  \cite{KS06} and \cite{KS07}. (Various irreducibility conditions for abelian actions are defined in Section \ref{subsection: def PH}).

\section{Setting and statements}

\subsection{Definitions of partially hyperbolic actions and $PH_{\mathrm{vol}}^\infty(k, M)$} \label{subsection: def PH}
An Anosov abelian action is a smooth $\ZZ^k-$action on a smooth manifold which contains an Anosov element.

In this paper we will study  \emph{partially hyperbolic (PH)} $\ZZ^k-$actions on  smooth manifolds. Recall that a $C^1-$diffeomorphism $f$ on a compact smooth manifold $M$ is called partially hyperbolic if there is a $Df-$invariant splitting $E^s\oplus E^c\oplus E^u $ of $TM$ such that there exists $k>1$ such that for any $x\in M$ and any choice of unit vectors $v^s\in E^s$, $v^c\in E^c$, $v^u\in E^u$, $$\|Df^k(v^s)\|<1<\|Df^k(v^u)\|$$
$$ \|Df^k(v^s)\|<\|Df^k(v^c)\|<\|Df^k(v^u)\|$$
Smooth action $\al:\ZZ^k\to \Diff^\infty(M)$ is called partially hyperbolic if there exists $a\in \ZZ^k$ such that $\al(a)$ is a partially hyperbolic diffeomorphism. By commutativity all the elements of $\al$ preserve $E^s_{a}\oplus E^u_{a}$ and $E^c_{a}$. Therefore without loss of generality we make the following \textbf{standing definition} for this paper:  
\begin{definition}\label{def: ph splt}An action $\al:\ZZ^k\to \Diff^\infty(M)$ is called \emph{partially hyperbolic} if there exist a $D\al-$invariant continuous splitting of $TM=E^c(\al)\oplus E_T(\al)$ and a \emph{partially hyperbolic element} $a\in \ZZ^k$ in the sense that $\al(a)$ is a partially hyperbolic diffeomorphism and associated $D\al(a)-$invariant splitting satisfies
\begin{equation}\label{eqn: def ET}
E_T(\al)=E^u_{a}\oplus E^s_{a}, E^c=E^c_{a}
\end{equation}
The splitting $E_T(\al)\oplus E^c(\al)$ is called \emph{partially hyperbolic splitting associated to $\al$}. Any elment $a\in \ZZ^k$ such that $\al(a)$ is a partially hyperbolic diffeomorphism and satisfies \eqref{eqn: def ET} is called a \emph{partially hyperbolic element (of $\al$}).


\end{definition}


In this paper we will study \textit{conservative} partially hyperbolic action, i.e. we always assume $\al$ is preserving an ergodic volume $\nu_M$ on $M$. Then there are finitely many linear functionals $\chi$ on $A$, called  {\em Lyapunov functionals}, a set of full measure $\Lambda$ and  a measurable splitting of the bundle $T_{\Lambda}(E_T)=\bigoplus \limits_{\chi}E^{\chi}$, 
such that for $v\in E^{\chi}$ and $b\in A$ the Lyapunov exponent of $v$ with respect to $\al(b)$ is $\chi(b)$.  Lyapunov functionals $\chi$ can easily be extended from $\ZZ^k$ to $\RR^k$. If $\chi$ is a non-zero Lyapunov exponent then we define its \emph{coarse Lyapunov subspace} by $$E_\chi:=\bigoplus\limits_{\{\lambda=c \chi:c>0\}} E^\lambda.$$
The connected components of the set $\bar A=\bigcap\limits_{\chi\neq 0}(\ker\chi)^c$ are called Weyl chambers for action $\alpha$.  See \cite[Section 5.2]{KalKat} for more details. The elements in $a\in \bar A$ are called \emph{regular}. If $E^c$ is trivial, $\alpha$ is Anosov.  

In the study of Anosov actions it is very natural to assume the presence of sufficiently many Anosov elements. For example in the proof of global classification of Anosov action on infranilmanifold \cite{FKS}, the authors assume that there exists an Anosov element in each Weyl chamber. In this paper we assume similarly that in any Weyl chamber $\mathcal{C}$ there is a partially hyperbolic element $b_\mathcal{C}\in \mathcal{C}$.
\begin{definition}\label{def: PHmathrmvolinfty(k, M)}We denote by $PH_{\mathrm{vol}}^\infty(k, M)$ the space of  conservative partially hyperbolic $\ZZ^k$ actions $\al: \ZZ^k\to \Diff^\infty_{\mathrm{vol}}(M)$ satifying that for any Weyl chamber there exists a partially hyperbolic element of $\al$ in it.
\end{definition}



We denote by $\{\chi_i,i=1,\dots\}$  a maximal collection of non-zero Lyapunov exponents that are not positive multiples of one another. 
The action $\al$ is called \emph{full}\footnote{In \cite{DX} we also define similar fullness assumption for Anosov actions which is basically the fullness condition here combined with a higher rank assumption.} if for every coarse Lyapunov distribution $E_\chi$, there exist a Weyl chamber $\mathcal{C}$ and a partially hyperbolic element of $\al$, $b_\mathcal{C}\in \mathcal{C}$ such that $$E^s_{b_{\mathcal{C}}}= \bigoplus_{\lambda\neq c\chi,c>0} E_\lambda,\quad  \nu_M-a.e.$$ and $$E^u_{b_{\mathcal{C}}}=E_\chi, ~~~\nu_M-a.e.$$
Under fullness assumption all coarse Lypunov distributions are not only measurable, but also globally defined H\"older continuous distributions which integrate to H\"older  foliations with smooth leaves. We denote the foliation associated to $E_\chi$ by $W^\chi$. Similar to the case of Anosov actions under the fullness assumption we have $$E_\chi=\bigcap_{b_{\mathcal{C}}, \chi(b_{\mathcal{C}})>0} E^u_{b_{\mathcal{C}}},\quad W^\chi:=\bigcap_{b_{\mathcal{C}}, \chi(b_{\mathcal{C}})>0} W^u_{b_{\mathcal{C}}}$$
where $W^u_{b_{\mathcal{C}}}$ is the unstable manifold for the partially hyperbolic diffeomorphism $\al(b_\mathcal{C})$. $W^\chi$ is called the \emph{coarse Lyapunov foliation} associated to  $E_\chi$.

For $\al\in PH_{\mathrm{vol}}^\infty(k, M)$ we will also use the following notions which all describe to what extent is $\al$ irreducible:

- $\al$ is \emph{maximal} if $k\geq 2$ and there are exactly $k+1$ coarse Lyapunov exponents which correspond to $k+1$ distinct Lyapunov hyperplanes, and if Lyapunov hyperspaces are in general position (namely, if no Lyapunov hyperspace contains a non-trivial intersection of two other Lyapunov hyperspaces). 
 
-  $\al$ is \emph{totally non-symlpectic} (TNS) if there are no negatively proportional Lyapunov exponents. 

- $\al$ is \emph{Cartan} if all coarse Lyapunov distributions are one-dimensional.

- $\al$ is \emph{resonance-free} for any Lyapunov functionals $\chi_i$ , $\chi_j$, the functional $(\chi_i − \chi_j )$ is not proportional to $\chi_l$.

We say a $\ZZ^k-$action is \emph{rank-one} if all the Lyapunov hyperplanes coincide and there are only two Weyl chambers.  
\subsection{Uniformly compact center foliation, pathological center foliation and precise statement of Theorem \ref{main: intro}}\label{subsection: restate thm mainintro}In general, the center distribution $E^c$ may not be integrable, and we are not aware of any classification results in such a general setting. In all the results below we will make extra assumptions: we will consider actions whose center distribution integrates to foliation $W^c$ with \emph{uniformly compact} leaves, for precise definition see section \ref{subsection: trivial holonomy for foliation}. Notice that this may not however even imply that the space of leaves is even Hausdorff. In most of the following cases we will assume in addition that all center leaves have \emph{trivial holonomy} which covers many cases of interests since this condition is equivalent to the condition that $M$ is a topological fibration over $M/W^c$ and $M/W^c$ is a topological manifold. For the definiton of holonomy of foliation see Section \ref{subsection: trivial holonomy for foliation}.

We will say that the central foliation $W^c$ is \emph{pathological} for action $\al$ if there exists a coarse Lyapunov distribution $E^i$ such that $\dim(E^i)=1$ and $E^i\oplus E^c$ are continuous bundles which integrates to H\"older foliation $W^{i,c}$, and either $\nu_M$ does not \textit{have Lebesgue desintegration along $W^c$} or $W^c$ is not \textit{strongly absolutely continuous} in $W^{i,c}$. Basically these two conditions are related to absolute continuity for $W^c$ foliation, for precise definitions see section \ref{subsection: abs cont}.

Suppose $\al\in PH_{\mathrm{vol}}^\infty(k, M)$ is a maximal Cartan partially hyperbolic action defined in the previous section, and the common center foliation $W^c$ for $\al$ is uniformly compact and one-dimensional. In this case we call $\al$ is a \emph{$C^\infty$ conservative circle extension over a maximal Cartan action}, which is mentioned in Theorem \ref{main: intro}. Then  Theorem \ref{main: intro} can be restated as the following: suppose $\al$ is defined above, then we have
\begin{prop}[Precise statement of  Theorem \ref{main: intro}]\label{prop: restate main intro}
If $\al\in PH_{\mathrm{vol}}^\infty(k, M)$ is a $C^\infty$ conservative circle extension over a maximal Cartan action, then exactly one of the following holds:
\begin{enumerate}\item $W^c$ is pathological.
\item There is a finite index subgroup $A\subset \ZZ^k$ and a finite cover $M^\ast$ of $M$ such that the lift $\al^\ast|_{A}$ of $\al_{A}$ on $M^\ast$ is  smoothly conjugated to a product of a rotation on the circle and a linear Anosov action on a torus.
\end{enumerate} 
\end{prop}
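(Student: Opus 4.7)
The plan is to chain together the three main technical results announced in the introduction, working under the assumption that case (1) fails; I then derive case (2).

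\textbf{Step 1 (Reduction to a smooth fibration over an Anosov quotient).} Because $\alpha$ is maximal Cartan, every coarse Lyapunov distribution is one-dimensional, so the uniform quasiconformality assumption along coarse Lyapunov bundles in Theorem \ref{main} is trivially met, and the Cartan plus general-position-of-Lyapunov-hyperplanes hypotheses deliver the TNS and resonance-free irreducibility built into Theorem \ref{main}. Since by assumption $W^c$ is uniformly compact and \emph{not} pathological, volume disintegrates into Lebesgue along $W^c$ and $W^c$ is strongly absolutely continuous inside each $W^{i,c}$. Applying Theorem \ref{main} concludes that $\alpha$ is a smooth fiber bundle extension of a $\ZZ^k$-Anosov action $\bar\alpha$ on the quotient $N := M/W^c$, with smooth circle fibers.

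\textbf{Step 2 (Global rigidity of the base).} The quotient action $\bar\alpha$ inherits the Lyapunov structure transverse to $E^c$, so it is itself a maximal Cartan Anosov $\ZZ^k$-action, in particular higher-rank, TNS, and resonance-free. Applying the conservative Katok--Spatzier type classification, Theorem \ref{main: K-S conservative}, yields a finite index subgroup $A \subset \ZZ^k$ and a finite cover $\bar N^{\ast}$ of $N$ so that the lift of $\bar\alpha|_{A}$ to $\bar N^{\ast}$ is $C^\infty$-conjugate to a linear Anosov $A$-action on a torus $\TT^{n}$.

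\textbf{Step 3 (Trivializing the circle extension).} Pulling the finite cover back through the fibration of Step~1 gives a finite cover $M^{\ast}$ of $M$ with a smooth circle bundle structure $\pi:M^{\ast}\to\TT^{n}$ and an $A$-action by bundle automorphisms over a linear Anosov action. Trivializing $\pi$ over each fundamental domain and reading off the action on $S^1$ produces a $C^\infty$ cocycle $\beta: A\times\TT^{n}\to\Diff^\infty(S^1)$ over the higher-rank linear Anosov base. Invoking the essential cocycle rigidity results of \cite{DX} and \cite{NTnonabelian}, after a further restriction to a finite index subgroup (to eliminate possible finite-group-valued periodic data) and, if necessary, one more finite cover of $M^{\ast}$ (to fix orientations on the circle fibers), $\beta$ becomes smoothly cohomologous to a constant cocycle taking values in the rotation group of $S^1$. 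The resulting trivialization identifies $M^{\ast}$ with $S^1\times\TT^{n}$ equivariantly, realizing the lifted restricted action as a product of a rotation action on $S^1$ with a linear Anosov action on $\TT^{n}$, which is exactly conclusion (2).

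\textbf{Main obstacle.} The delicate step is Step 3: a priori $\beta$ is only a $\Diff^\infty(S^1)$-valued cocycle over $\bar\alpha$, and one needs the conclusion that it is $C^\infty$-cohomologous to a cocycle with values in $\mathrm{SO}(2)$. This is precisely the point where the higher-rank hypothesis (maximality, Cartan, and the general-position condition), not just single-diffeomorphism accessibility arguments, is essential: it is the non-abelian essential cocycle rigidity of \cite{DX}, \cite{NTnonabelian} that forces the trivialization up to finite index and finite cover. Verifying that conservativity, ergodicity, and the Lyapunov data pass cleanly through the fibration and the finite cover is then routine bookkeeping.
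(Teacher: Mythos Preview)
Your Steps 1 and 2 are essentially the paper's route, though you omit one ingredient: the hypothesis on $\alpha$ gives only a \emph{uniformly compact} center foliation, not trivial holonomy, so condition (C) of Theorem \ref{main} must be checked. The paper does this by invoking Theorem~C of \cite{BoThesis} (since fullness produces a partially hyperbolic element of central codimension $k+1$ type), obtaining trivial holonomy after a double cover.

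The real divergence, and a genuine gap, is in Step 3. You treat the extension as a general $\Diff^\infty(S^1)$-valued cocycle and appeal to \cite{DX}, \cite{NTnonabelian} to reduce it to a constant cocycle in $SO(2)$. Two problems: first, the circle bundle $M^\ast\to\TT^n$ need not be trivial (its Euler class in $H^2(\TT^n;\ZZ)$ can be nonzero), so ``trivializing over a fundamental domain'' does not globally produce a cocycle $\beta:A\times\TT^n\to\Diff^\infty(S^1)$; second, even granting a reduction to a constant cocycle, you get only a commuting family of circle diffeomorphisms, and you give no argument that these are conjugate to rotations. The paper avoids both issues by exploiting a conclusion of Theorem \ref{main} that you did not use: since $\dim E^c=1$, part (c) says $\alpha$ is an \emph{isometric} extension over $\bar\alpha$. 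The invariant leafwise metric, after normalizing arclength and passing to an orientation double cover, yields a free transitive smooth $\TT^1$-action on $M$, so $\pi:M\to M/W^c$ is a \emph{principal} $\TT^1$-bundle and the structure group is $SO(2)$ from the outset. Then Theorem 7.1 of \cite{NTnonabelian} applies directly to principal extensions over TNS linear actions and gives essential algebraicity. Using the isometric-extension output of Theorem \ref{main} is the missing idea.
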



\subsection{Basic notions, main assumptions and examples}\label{subsection: def ABC} In fact Theorem \ref{main: intro} and Proposition \ref{prop: restate main intro} are corollaries of our more general global reduction results for a class of partially hyperbolic actions. We first define few notions which will be important in narrowing down our class of partially hyperbolic actions. 

A $C^1-$diffeomorphism $f:M\to M$ is \emph{uniformly quasiconformal (UQC)} on an $f-$invariant distribution $E\subset TM$ if for for $x\in M, n\in \ZZ$
$$K^E(x,n)=\frac{\sup\{\|Df^n(v)\|:v\in E, \|v\|=1\}}{\inf\{\|Df^n(v)\|:v\in E, \|v\|=1\}}$$
is uniformly bounded in $x,n$. The quantity $K^E$ measure the failure of iterates of $Df$ to be conformal on the bundle $E$. Notice that $f$ is automatically uniformly quasiconformal if $\dim E=1$.

We will also use the classical bunching conditions, described in detail in Section \ref{subsection: bunching cond}, which describe domination of  hyperbolicity of a partially hyperbolic diffeomorphism $f$ over non-conformality in the center direction. For example $r-$bunching and $\infty -$bunching both hold if there is a continuous Riemannian metric on $E^c$ with respect to which $Df|_{E^c}$  is an isometry.

For $\al\in PH_{\mathrm{vol}}^\infty(k, M)$ we formulate and label the following conditions:

(A)  For every coarse Lyapunov distribution $E^i$, there exists $a\in \ZZ^k$ such that $\al(a)$ is uniformly quasiconformal on $E^i$ and $E^i=E^u_a$.  

(B) $\alpha$ is $r-$bunched for some $r\geq 1$, or $\dim(W^c)=1$.

(B')  $\alpha$ is $\infty-$bunched or $\dim(W^c)=1$.

(C) The center distibution $E^c$ is integrable to a center foliation $W^c$, all center leaves are compact and have trivial holonomy.

\begin{remark}
Condition (A) contains two assumptions: one is fullness of $\al$, and the other is the UQC assumption. We will show in Lemma \ref{lemma: assp A+higher rank imply tns} that fullness  implies that the action satisfies (unless it is rank-one) previously used higher-rank conditions in \cite{KS07} and \cite{KS06} such as TNS condition and resonance free condition. 
\end{remark}

\begin{remark} If $\al\in PH_{\mathrm{vol}}^\infty(k, M)$ satisfies  (A) and (C), then for any Weyl Chamber $\mathcal C$, the element $\al(b_\mathcal{C})$ defined in last section, is \emph{dynamically coherent} (see section \ref{subsection: uniform compa cent} and \cite{BB}), in particular this means that for each $i$, there exist foliations $W^{i,c}$ with smooth leaves, tangent to the distributions $E^i\oplus E^c$.  Moreover, the foliations $W^i$ and $W^c$ subfoliate $W^{i,c}$. It is conjectured that every compact center foliation is uniformly compact and any partially hyperbolic system with uniformly compact center foliation is finitely covered by one which all center leaves have trivial holonomy, cf. \cite{BB, Carrasco, Gogolev}. \end{remark}

Algebraic examples of actions which satisfy conditions (A), (B) and (C) can be obtained by taking a Cartan action on any torus and extending it by a constant principal $G-$cocycle to the product of torus and $G$, where $G$ is a compact connected Lie group. For a concrete example take an action $\bar\al: \ZZ^2\to  \Diff(\mathbb T^3)$ (Example 2.2.15 in \cite{KN}) generated by the following commuting pair of matrices:  \[
A =
 \left( \begin{array}{ccc}
     0 & 1 & 0 \\
     0 & 0 & 1 \\
     1 & 8 & 2 \\
        \end{array} \right),
B = 
 \left( \begin{array}{ccccc}
     2 & 1 & 0 \\
     0 & 2 & 1 \\
     1 & 8 & 4 \\
   \end{array} \right)
 \]
  and define  $\al: \ZZ^2\to  \Diff( \mathbb T^3\times \mathbb T^d)$, $d\ge 1$ to be $\al(a)( x,y)= (\bar\al(a)(x), y)$.  For example, on $ \mathbb T^d$,  $\ZZ^2$ can act by toral translations. Small perturbations of a class of such actions have been classified recently in \cite{DF}.
  
  In the concrete example above the coarse Lyapunov distributions of the action generated by $A$ and $B$ are 1-dimensional. In the arguments for main results of this paper we often use different strategy for the situation when coarse Lyapunov distributions are higher dimensional, and obtain stronger results in this case. Examples of actions with 2-dimensional coarse Lyapunov distributions on which each element of the action is UQC can be obtained on $\mathbb T^6\times \mathbb T^d$ as follows. On $\mathbb T^6$ let $A\in SL(6, \mathbb Z)$ have characteristic polynomial $x^6+5x^2+6x+1$. Then $A$ has 3 pairs of complex conjugate eigenvalues, 2 pairs outside the closed unit disc, and one pair inside the open unit disc. The dimension of the centralizer $C(A)$ of $A$ in $SL(6, \mathbb Z)$ is 2 and any element $B$ of $C(A)$ also has three pairs of complex conjugate eigenvalues and has same eigenspaces as $A$. These eigen spaces are coarse Lyapunov distributions for the $\mathbb Z^2$ action $\al$ generated by $A$ and $B$, and on each of them any element of $\al$ is UQC. Action $\al$ can then easily be extended to a partially hyperbolic action from $\mathbb T^6$ to $\mathbb T^6\times \mathbb T^d$ by taking identity action on $\mathbb T^d $ or by action by toral translations.

\subsection{Principal bundle extensions and essential algebraicity}\label{subsection main examp} We list here classes of algebraic examples: principal $G-$cocycles and principal $G-$extensions which fall into the set-up described above. For details cf. \cite{NTnonabelian}

Let $M$ be a compact manifold, and $G$ be a compact connected Lie group. A smooth principal $G-$bundle $P$ over $M$ is a smooth fiber bundle $P$ over $M$ such that the $G$ acts freely, transitively, smoothly and preserving the fibers. In other words, each fiber of $P$ is a $G-$homogeneous space. The \textit{trivial} principal $G-$bundle is $M\times G$ with $G$ acting on $M\times G$ from the right.

A smooth $G-$map $F: P_1\to P_2$ between principal $G-$bundles $P_1, P_2$ is a smooth map such that $F(\zeta g)=F(\zeta)g$ for all $(\zeta, g)\in P_1\times G$. Since $G$ is chosen to be compact we can fix a family of $G-$invariant Riemannian metric on the fibers such that the restriction of $G-$map to any fiber is an isometry.

Consider an action of a group $A\to \mathrm{Diff}^\infty(M)$ (for example $A:=\ZZ^k$) on the manifold $M$, $\bar\al:A\times M\to M$. A principal \textit{$G-$extension} of $\bar\al$ is a principal $G-$bundle $\pi: P\to M$ endowed with a lift of $\bar\al$ to an action $\al:A\times P\to P$ by $G-$maps. In particular if $P$ is a trivial bundle $M\times G$, an extension $\al$ over $\bar\al$ is described by a \textit{cocycle}: $$\al:A\times M\times G \to M\times G, \al(a)\cdot (x,g)=(\bar\al(a)\cdot x, \beta(a,x)g), x\in M, g\in G, a\in A$$where $\beta$ satisfies $$\beta(ab,x)=\beta(a, 
\al(b)\cdot x)\beta(b,x)$$ If $\beta$ does not depend on $x\in M$ then $\beta:A\to G$ is a group homomorphism and $\beta$ is called a constant $G-$cocycle over $\bar\al$. In this case we call $\al$ a product action over $\bar\al$.

A smooth $\ZZ^k-$action $\al:\ZZ^k\to \mathrm{Diff}^\infty(M)$ is called \textit{essentially algebraic} if there is a finite index  subgroup $A$ of $\ZZ^k$ and a finite cover $\pi:\hat{M}\to M$ such that the lifted action $\hat{\al}:A\to \mathrm{Diff}^\infty(\hat{M})$ is smoothly conjugated to a product  of a linear Anosov action on a torus, and an action by left multiplication on  $G$, where $G$ is a compact connected Lie group.

\subsection{Non-algebraic examples and essential products over algebraic Anosov actions}\label{subsection suspension}In this subsection we define a main class of non-algebraic example of smooth partially hyperbolic $\ZZ^k-$action: cocycle taking values in the group of diffeomorphisms, which can be viewed as a non-algebraic generalization of examples in section \ref{subsection main examp}. As before we suppose $\bar\al$ is a smooth action on a smooth manifold $M$ of a group $A$, i.e. $\bar\al: A\times M\to M$. In addition we assume $N$ is a compact smooth manifold. Similar to section \ref{subsection main examp}, we consider an extension $\al: A\times M\times N\to M\times N$ of $\bar\al$ which is given by a \textit{cocycle}
$$\al:A\times M\times N \to M\times N, \al(a)\cdot (x,y)=(\bar\al(a)\cdot x, \beta(a,x)y), x\in M, y\in N, a\in A$$here $\beta: A\times N\to \Diff^\infty(N)$ satisfies $$\beta(ab,x)=\beta(a, 
\al(b)\cdot x)\beta(b,x)$$ In the case that $\beta$ does not depend on $x\in M$ we call $\beta$ a \textit{constant cocycle} (taking values in $\Diff^\infty(N)$) over $\bar\al$. If $\beta$ is a constant cocycle, then $\al$ is a product action of    $\bar\al$ and a $\ZZ^k-$action by diffeomorphisms  of $N$.

A smooth $\ZZ^k-$action $\al:\ZZ^k\to \mathrm{Diff}^\infty(M)$ is called \textit{essentially a product over Anosov linear action} if there is a cover (\textbf{not} necessarily finite) $\pi:\hat{M}\to M$ such that the lifted action $\hat{\al}:\ZZ^k\to \mathrm{Diff}^\infty(\hat{M})$ is smoothly conjugated to a product of a smooth  $\ZZ^k$ action on $N$ and an action which is lifted from a linear Anosov action on a torus.

\subsection{Fiber bundle extensions} In this section we introduce a wider class of partially hyperbolic $\ZZ^k-$actions: \textit{fiber bundle extension} over $\ZZ^k-$actions, which contains all examples in section \ref{subsection main examp} and \ref{subsection suspension}.

\begin{definition}\label{fiber bundle extension}Suppose $M,N$ are two compact manifolds and $\pi:M\to N$ is a submersion. Assume that there exist $\ZZ^k-$actions $\al, \bar{\al}$ on $M,N$ respectively.  We say $\al$ is a \textit{$C^r-$fiber bundle extension over $\bar\al$} if  $M,N,\pi, \al,\bar\al$ are $C^r$ and $\bar{\al}\circ \pi=\pi\circ\al$. 
In addition if $\al$ is a $C^\infty$ fiber bundle extension over $\bar{\al}$ then $\al$ is called an \textit{isometric extension} over $\bar\al$ if there is a smoothly varying family of Riemannian metrics $\{d_x\}_{x\in N}$ on fibers $\{\pi^{-1}(x)\}$ such that for any $a\in \ZZ^k$, $\al(a) $ is an isometry on each fiber.
\end{definition}

\begin{remark}
Note that by classical Ehresmann's fibration theorem (cf. \cite{Ehres fibration}) if $\pi:M\to N$ is a  (smooth) surjective submersion for which $\pi^{-1}\{x\}$ are compact and connected for all $x\in N$, then $\pi$ is a locally trivial fibration, so it admits a compatible fiber bundle structure. Foliations associated to submersions are \emph{simple foliations}. For simple foliations every leaf has trivial holonomy. Conversely, if one has a foliation on $M$ such that each leaf has trivial holonomy, the leaf space is Hausdorff and each leaf has finitely generated fundamental group, then the foliation is simple, i.e. it is defined via a submersion from $M$ to the space of leaves. This justifies using the term  \textit{fiber bundle extensions} in the set-up described above.
\end{remark}

\subsection{Reduction to fiber bundle extensions over Anosov actions}\label{Fibrations}The first step to prove global rigidity results for partially hyperbolic actions is the following theorem about reduction to fiber bundle extensions.
\begin{maintheorem}\label{main}
If $\al\in PH_{\mathrm{vol}}^\infty(k, M)$ and  satisfies the conditions (A), (B) and (C), then one of the following two cases holds:
\begin{enumerate}
\item 
There is $i$ such that $\dim(E^i)=1$ and  $W^c$ is pathological. 
\item $\al$ is a $C^r$ fiber bundle extension over $\bar\al$, where $\bar\al$ is an {Anosov} action induced by $\al$ on the quotient space $M/W^c$, and {  $\bar\al$ also satisfies (A)}.  Moreover:

\begin{enumerate}
\item If $\alpha$ satisfies (B'), then $\al$ is a $C^\infty$ fiber bundle extension over $\bar\al$. 

\item If $\alpha$ satisfies (B') and $\dim(E^i)>1$ for all $i$, then  all coarse Lyapunov foliations of $\bar{\al}$ are smooth.

\item If $\dim(E^c)=1$ then $\al$ is an isometric extension over $\bar\al$.
\end{enumerate}
\end{enumerate}
\end{maintheorem}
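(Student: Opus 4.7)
The plan is to reduce the theorem to the rank-one case by applying the fibration dichotomy of \cite{AVW2, BX} (and its refinements developed in this paper) to the specific partially hyperbolic elements produced by condition (A), and then to glue the resulting smooth quotient structures along the common center foliation $W^c$.

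For each coarse Lyapunov distribution $E^i$, condition (A) supplies an element $a_i\in\ZZ^k$ such that $\al(a_i)$ is UQC on $E^i=E^u_{a_i}$; by (B) this element is also bunched, and by (C) its center foliation is $W^c$ with compact leaves and trivial holonomy. Feeding these data into the rank-one reduction of \cite{BX} (strengthened as required, which is where the improvements flagged in the introduction enter) yields for each $i$ the following dichotomy: either $\nu_M$ fails to disintegrate to Lebesgue along $W^c$, or $W^c$ fails to be strongly absolutely continuous inside $W^{i,c}$ (so that $W^c$ is pathological in the sense of Section~\ref{subsection: restate thm mainintro}), in which case the UQC hypothesis on $E^i$ forces $\dim E^i=1$ and we land in case~(1); or else $\al(a_i)$ descends to a smooth Anosov diffeomorphism on $M/W^c$ and $\pi\colon M\to M/W^c$ is a $C^r$ fiber bundle.

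If the pathological branch does not occur for any $i$, then the smooth manifold structures on $M/W^c$ and the smooth fibrations $\pi$ obtained from different $a_i$'s must agree, since they are all determined by the common foliation $W^c$. By commutativity the entire $\ZZ^k$-action descends to a smooth action $\bar\al$ on $M/W^c$; since every Weyl chamber contains some $a_i$ whose descent is Anosov, $\bar\al$ is an Anosov action, and UQC of $\al(a_i)$ on $E^i$ transfers to UQC of $\bar\al(a_i)$ on the pushforward $D\pi(E^i)$, so $\bar\al$ inherits (A). For the refinements, (a) follows by substituting $\infty$-bunching into the rank-one fibration theorem to obtain $C^\infty$ smoothness; (b) follows by combining UQC of $\bar\al$ on each $\bar E^i$ of dimension greater than $1$ with $\infty$-bunching to conclude smoothness of the H\"older coarse Lyapunov foliations of $\bar\al$ via Sadovskaya-type arguments; and (c) follows because when $\dim E^c=1$ the compact center leaves are circles, and one can average a smooth family of fiberwise metrics, using a single partially hyperbolic element and its commutant, into a smooth $\ZZ^k$-invariant family of Riemannian metrics.

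The principal difficulty is in the second step: adapting the rank-one fibration theorem of \cite{BX}, which was originally formulated for one-dimensional stable or unstable directions, to coarse Lyapunov bundles of arbitrary dimension, with UQC serving as the replacement for one-dimensionality. In particular one must prove that when $\dim E^i>1$ the pathological branch of the dichotomy cannot occur — so that the qualifier $\dim E^i=1$ in case~(1) is genuinely forced — which requires deducing strong absolute continuity of $W^c$ inside $W^{i,c}$ from UQC together with bunching in a setting richer than that handled in \cite{BX}. The analogous upgrade needed in part~(b), from a H\"older UQC foliation to a smooth one on the quotient, is of a similar nature and presumably handled by the same improved arguments.
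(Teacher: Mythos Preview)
Your outline has the right shape—use the special elements $a_i$ from (A), extract regularity of $W^c$, then combine—but the reduction to rank one fails at the point where you invoke it. The fibration theorem of \cite{BX} requires uniform quasiconformality on \emph{both} the unstable and the stable bundle of the single diffeomorphism; indeed this two-sided UQC is precisely what \cite{BX} uses to obtain Lebesgue disintegration of $\nu_M$ along $W^c$ in the first place (see the proof of Proposition~\ref{prop: ac implies linearity of center fol}). Condition (A) makes $\al(a_i)$ UQC on $E^u_{a_i}=E^i$, but says nothing about $E^s_{a_i}=E^{\hat i}=\bigoplus_{j\neq i}E^j$, which is typically not UQC for $\al(a_i)$. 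So no single $\al(a_i)$ meets the hypotheses of any available rank-one dichotomy, and there is no ``smooth quotient obtained from $a_i$'' to glue. Your claim that ``the UQC hypothesis on $E^i$ forces $\dim E^i=1$'' in the pathological branch is exactly the unproved step.

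The paper does not try to repair a rank-one theorem for a single element. When $\dim E^i>1$ for all $i$, it proves Lebesgue disintegration of $\nu_M$ along $W^c$ by a genuinely higher-rank \emph{filtration}
\[
W^c\subset W^{n,c}\subset W^{n-1,n,c}\subset\cdots\subset W^{1,\dots,n,c}=M,
\]
showing at each step that the leaf measure on $W^{j,\dots,n,c}$ disintegrates to Lebesgue along $W^{j+1,\dots,n,c}$ via the one-sided UQC of $\al(a_j)$ on $E^j$ (Proposition~\ref{prop center holonomy quasiconf}), and then iterating Lemma~\ref{lemma: leb disintegration filtration}. Only after this is established does each $\al(a_i)$ yield, via Proposition~\ref{prop: ac implies linearity of center fol}, that the single foliation $W^{\hat i,c}$ is $C^{r+}$—not that $W^c$ is; one then recovers $W^c=\bigcap_i W^{\hat i,c}$ as $C^{r+}$ by repeated use of Lemma~\ref{lemma: intersection regular}. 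In the mixed case where some $\dim E^i=1$, one assumes $W^c$ non-pathological, obtains $W^c$ as a $C^{r+}$ subfoliation inside each $W^{i,c}$ separately (Proposition~\ref{prop: single diff, non-pathologic imply smoothness} for the one-dimensional pieces), and then glues by a foliation Journ\'e argument (Lemma~\ref{lemma: Foliation version Journe Lemma}). Your ``principal difficulty'' is thus misdiagnosed: the problem is not extending \cite{BX} to higher-dimensional UQC bundles—it already handles those—but compensating for the absence of UQC on the \emph{stable} side of each $\al(a_i)$, and this is done by exploiting the whole family $\{a_i\}$ simultaneously rather than by any single-element strengthening.
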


\begin{remark}The situation (1) in Theorem \ref{main} could happen. For example, there exists single diffeomorphism with pathological center foliation satisfies all our assumptions of Theorem \ref{main} (cf. \cite{Shub-Wilkinson}). It is a very interesting question whether there are such  examples for higher rank partially hyperbolic actions.
\end{remark}


\subsection{Rigidity results for partially hyperbolic higher rank actions}\label{subsection: main results higher ranks}
Under the higher rank assumption, we can get  stronger rigidity results for partially hyperbolic abelian actions. For the subsequent two results we assume  $\al\in PH_{\mathrm{vol}}^\infty(k, M)$ is not rank-one, all  $a\in \ZZ^k-\{0\}$ are partially hyperbolic elements, and $\al$ satisfies conditions  (A), (B') and (C). Moreover we assume the case (1) in Theorem \ref{main} does \textbf{not} hold. Then we have: 




\begin{maintheorem}\label{main: global rigidity} 
$\al$ is essentially a product over Anosov linear action in the sense of section \ref{subsection suspension}.
\end{maintheorem}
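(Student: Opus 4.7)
The plan is to combine Theorem \ref{main} with the higher-rank global rigidity for Anosov actions (Theorem \ref{main: K-S conservative}) and with essential cocycle rigidity from \cite{DX} and \cite{NTnonabelian}. First, since case (1) of Theorem \ref{main} is excluded and $\al$ satisfies (B'), part (2)(a) gives that $\al$ is a $C^\infty$ fiber bundle extension $\pi: M \to N := M/W^c$ over an Anosov action $\bar\al: \ZZ^k \to \Diff^\infty(N)$, and $\bar\al$ itself satisfies (A). Not being rank-one is inherited by $\bar\al$, so by fullness together with the higher-rank assumption (the remark after conditions (A)--(C) and Lemma \ref{lemma: assp A+higher rank imply tns}) one obtains that $\bar\al$ is TNS and resonance-free. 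I would then apply Theorem \ref{main: K-S conservative}, the conservative analogue of \cite{KS07}, to get a finite cover $q: \tilde N \to N$ and a finite-index subgroup $A \leq \ZZ^k$ such that the lifted subaction $\widetilde{\bar\al}|_A$ is smoothly conjugate to a linear Anosov $A$-action on a torus $\TT^d$. Since $\bar\al(\ZZ^k)$ commutes with $\bar\al(A)$ and the smooth centralizer of a higher-rank linear Anosov action on $\TT^d$ sits inside the affine group, I would extend the conjugacy to make the full $\ZZ^k$-action affine with Anosov linear part.

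Next I would pull back the bundle $M \to N$ along $q$ to obtain $\tilde M \to \tilde N \cong \TT^d$, and then along the universal cover $p: \RR^d \to \TT^d$. Contractibility of $\RR^d$ smoothly trivializes the pullback, yielding $\hat M \cong \RR^d \times F$ where $F$ is the compact fiber of $\pi$. The composite $\hat M \to M$ is a cover (generally of infinite degree), and the lifted $\ZZ^k$-action takes the skew form
\[
\hat\al(a)(x, y) = \bigl(\bar\al(a) x,\; \beta(a, x) y\bigr),
\]
with $\bar\al(a)$ affine on $\RR^d$ (lifted from the linear Anosov action on $\TT^d$) and $\beta: \ZZ^k \times \RR^d \to \Diff^\infty(F)$ a smooth cocycle.

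The final step is to invoke essential cocycle rigidity for $\Diff^\infty(F)$-valued cocycles over higher-rank affine Anosov actions, combining the abelian-target case of \cite{DX} with the non-abelian (diffeomorphism-valued) case of \cite{NTnonabelian}. The required TNS, resonance-free, and higher-rank hypotheses have been established above, and smoothness of $\beta$ follows from Theorem \ref{main}(2)(a). This produces a smooth transfer map $h: \RR^d \to \Diff^\infty(F)$ and a constant cocycle $\beta_0: \ZZ^k \to \Diff^\infty(F)$ satisfying $\beta(a, x) = h(\bar\al(a) x)^{-1} \beta_0(a) h(x)$. Conjugating $\hat\al$ by $(x, y) \mapsto (x, h(x) y)$ turns it into the genuine product $(x, y) \mapsto (\bar\al(a) x, \beta_0(a) y)$, which is exactly the form required by the definition from Section \ref{subsection suspension} of being essentially a product over an Anosov linear action.

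The hardest part will be this final cocycle-trivialization step: one must verify that the essential cocycle rigidity of \cite{DX}, \cite{NTnonabelian}, originally formulated for cocycles over actions on the torus, applies to the lifted cocycle on the universal cover $\RR^d$, and that the transfer $h$ behaves compatibly with the deck transformations of $\hat M \to M$ so that the product splitting actually descends to the cover-product structure in the definition. A secondary subtlety is the extension from $A$ to all of $\ZZ^k$ via the affine-centralizer argument, which uses the standing hypothesis that every non-zero element of $\ZZ^k$ is partially hyperbolic so that the induced base action has enough Anosov elements for that centralizer description to apply.
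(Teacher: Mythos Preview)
Your overall architecture is right (Theorem \ref{main} gives a smooth fiber bundle over an Anosov $\bar\al$ satisfying (A), then Theorem \ref{main: K-S conservative} linearizes $\bar\al$), but the decisive middle step differs from the paper and your version has a real gap.

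The paper does \emph{not} trivialize the bundle by contractibility and then invoke a black-box cocycle-rigidity theorem on the resulting $\Diff^\infty(F)$-valued cocycle. Instead it proves directly on the compact manifold $M$ that the transverse distribution $\bigoplus_i E^i$ is smooth and \emph{integrable} (Proposition \ref{prop: joint integ}, whose proof follows \cite{DX}). This produces an $\al$-invariant horizontal foliation $\mathcal W_H$ transverse to $W^c$. One then takes the universal cover $\hat N$ of $N$, pulls back the bundle to $\hat M\to\hat N$, and lifts $\mathcal W_H$. By suspension/foliation theory each lifted horizontal leaf is a global section, so the map $\varphi(\hat x,y)=\hat{\mathcal W}_H(y)\cap W^c(\hat x)$ is a smooth diffeomorphism $\hat N\times W^c(\hat x_0)\to\hat M$. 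Because $\hat{\mathcal W}_H$ is $\hat\al$-invariant, the conjugated action is already a product: the second coordinate depends only on $y$. Theorem \ref{main: K-S conservative} is applied only afterwards to linearize the $\hat N$-factor.

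In your route you first trivialize on $\RR^d$ and only then try to straighten a cocycle $\beta:\ZZ^k\times\RR^d\to\Diff^\infty(F)$. The difficulty you flag is genuine: the cocycle-rigidity statements in \cite{DX} and \cite{NTnonabelian} are for cocycles over Anosov actions on \emph{compact} spaces, and your $\beta$ need not descend to $\TT^d$ since the bundle over $\TT^d$ need not be trivial. Moreover, the relevant content of \cite{DX} in this setting \emph{is} precisely the joint-integrability statement used in Proposition \ref{prop: joint integ}; unwinding your ``cocycle rigidity'' invocation would bring you back to proving integrability of $\bigoplus_i E^i$ on $M$ anyway. So the cleaner and correct order of operations is: prove integrability on $M$, then lift. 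Two smaller points: Theorem \ref{main: K-S conservative} already conjugates the full $\ZZ^k$-action (on a finite cover of $N$), so no separate passage to a finite-index subgroup and affine-centralizer extension is needed; and you should note that the pushed-forward measure $\nu_N=\pi_*\nu_M$ is a smooth $\bar\al$-invariant volume, which is what makes Theorem \ref{main: K-S conservative} applicable.
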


\begin{maintheorem}\label{main: gl r dim 1} If  $\dim E^c=1$ then $\al$ is essentially algebraic in the sense of section \ref{subsection main examp}.

\end{maintheorem}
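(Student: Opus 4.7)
First I would apply Theorem~\ref{main}. Under the hypotheses of Theorem~\ref{main: gl r dim 1}, conditions (A), (B'), (C) hold, $\al$ is not rank-one, and case (1) of Theorem~\ref{main} is excluded; hence $\al$ is a $C^\infty$ fiber bundle extension over an Anosov $\ZZ^k$-action $\bar\al$ on $N := M/W^c$, and $\bar\al$ again satisfies (A). Because $\dim E^c = 1$, part (2)(c) of Theorem~\ref{main} gives an isometric extension with circle fibers, and density of $\bar\al$-orbits together with smoothness of the invariant family of fiber metrics forces the fiber lengths to be constant. I would then apply the conservative global Anosov rigidity Theorem~\ref{main: K-S conservative} to $\bar\al$, whose hypotheses are met because (A) together with higher rank implies TNS and the resonance-free condition (cf.\ the remark following the statement of conditions (A)--(C)). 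This smoothly conjugates $\bar\al$ to a linear Anosov $\ZZ^k$-action on a torus $\TT^n$, so that $\pi: M \to \TT^n$ becomes a smooth circle bundle with $O(2)$-structure group on which $\al$ acts by bundle automorphisms.

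The crux of the argument is to show this circle bundle is smoothly trivial after a finite cover of $M$ and a finite-index subgroup of $\ZZ^k$. Passing if necessary to the orientation double cover of the bundle (together with a finite-index subgroup of $\ZZ^k$ that lifts to it) I may assume a principal $S^1$-bundle structure over a finite torus cover of $\TT^n$. Its Chern class $c_1 \in H^2(\TT^n;\ZZ)$ is invariant under the induced cohomology action of the linear Anosov $\bar\al$. The Lyapunov exponents of $\bar\al$ on $H^2(\TT^n;\RR) \cong \Lambda^2 H^1(\TT^n;\RR)$ are the sums $\chi_i + \chi_j$ of Lyapunov exponents on $H^1$, and the TNS property coming from (A) precludes any such sum from vanishing identically on the acting group. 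Therefore $\bar\al^*$ has no nonzero fixed vector in $H^2(\TT^n;\RR)$; by torsion-freeness of integer cohomology of the torus this forces $c_1 = 0$, so the bundle is smoothly trivial and the (covered and restricted) manifold is identified with $\TT^n \times S^1$.

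After this reduction the action takes the cocycle form $\al(a)(x,y) = (A_a x,\, y + \phi_a(x))$ for a smooth $S^1$-valued cocycle $\phi$ over the linear Anosov action (possibly after a further index-two restriction of $\ZZ^k$ to make the fiber action orientation-preserving, hence by rotations rather than reflections). Essential cocycle rigidity for higher rank Anosov actions, from \cite{DX} and \cite{NTnonabelian}, then produces a smooth coboundary trivializing $\phi$ up to a constant cocycle, possibly on a further finite-index subgroup. The resulting smooth conjugacy realizes $\al$ as the product of a linear Anosov action on $\TT^n$ and a constant-homomorphism action $\ZZ^k \to S^1$ by left multiplication, which is precisely the essentially algebraic conclusion with $G = S^1$. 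I expect the main obstacle to be the middle step: carefully tracking the finite covers and finite-index subgroups needed to reduce to a principal $S^1$-bundle, and verifying cleanly that TNS forces the Chern class invariant in \emph{integer} cohomology to vanish.
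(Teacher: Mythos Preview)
Your proposal is correct, and the opening moves match the paper exactly: Theorem~\ref{main} gives the isometric circle extension with constant fiber length, a double cover orients the fibers to yield a principal $\TT^1$-bundle, and Theorem~\ref{main: K-S conservative} linearizes the base (the paper phrases the base as an infranilmanifold finitely covered by a torus, consistent with your finite-cover bookkeeping). The divergence is in the final step. The paper simply invokes Theorem~7.1 of \cite{NTnonabelian} as a black box: once $\al$ is recognized as a principal $\TT^1$-extension over a TNS linear Anosov action on an infranilmanifold, that theorem delivers essential algebraicity in one line. You instead unpack this for $G=S^1$ explicitly, first killing the Chern class via the TNS condition on $\Lambda^2 H^1$ (no sum $\chi_i+\chi_j$ vanishes, so some element acts without eigenvalue~$1$ on $H^2(\TT^n;\RR)$, and torsion-freeness finishes it), then applying $S^1$-valued cocycle rigidity on the trivialized bundle. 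Your route is more self-contained and makes the role of TNS in trivializing the topology transparent, at the cost of being longer and specific to circle fibers; the paper's citation covers general compact Lie group fibers in one stroke but hides the mechanism. Your self-identified obstacle---tracking finite covers and finite-index subgroups through the orientation and bundle-automorphism reductions---is real but routine, and you handle it correctly.
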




\subsection{Global rigidity for certain conservative Anosov $\ZZ^k-$actions }\label{GlobalAnosov}An intermediate step to prove Theorem \ref{main: global rigidity} is the following conservative extension of a global rigidity result of Anosov $\ZZ^k-$actions by Kalinin-Sadovskaya \cite{KS07}. 

We call an action $\al\in PH_{\mathrm{vol}}^\infty(k, M)$ Anosov if it has trivial center distribution. An affine automorphism  of a torus is the composite of a translation and a linear automorphism on the torus. 
\begin{maintheorem}\label{main: K-S conservative}
Let $\al\in PH_{\mathrm{vol}}^\infty(k, M)$ be an Anosov action with all non-trivial elements Anosov, which is not rank-one and satisfies (A). 
Then a finite cover of $\alpha$ is smoothly conjugated to a $\ZZ^k-$action formed by affine automorphisms of a torus.
\end{maintheorem}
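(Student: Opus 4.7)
The plan is to follow the strategy of Kalinin--Sadovskaya \cite{KS07} with two adaptations: one to accommodate the conservative setting, and one to handle the weaker form of uniform quasiconformality provided by condition (A), which gives UQC on each coarse Lyapunov distribution by possibly different elements rather than by a single element uniformly. First I would apply Lemma \ref{lemma: assp A+higher rank imply tns} to deduce from condition (A) together with the higher-rank hypothesis (not rank-one) that $\alpha$ is totally non-symplectic (TNS) and resonance-free; these are the spectral hypotheses required by \cite{KS07}. Since every non-zero element of $\ZZ^k$ is Anosov, the coarse Lyapunov foliations are globally defined H\"older foliations with smooth leaves, expressible as intersections of stable foliations of regular elements.

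Next, for each coarse Lyapunov distribution $E^i$, condition (A) produces an element $a_i \in \ZZ^k$ which is UQC on $E^i = E^u_{a_i}$; by Sadovskaya's theorem on uniformly quasiconformal Anosov systems, this provides a $C^\infty$ conformal structure along the leaves of $W^i$ invariant under the action. The TNS condition ensures the different coarse Lyapunov directions are compatible, and resonance-freeness prevents spurious identifications among their sums. I would then use these ingredients to construct, on a finite cover $M^\ast$ of $M$, a $\ZZ^k$-equivariant smooth atlas identifying $M^\ast$ with a torus $\TT^n$ and realizing $\alpha$ (restricted to a finite-index subgroup) as a $\ZZ^k$-action by toral automorphisms. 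This step follows the template of the Kalinin--Sadovskaya construction, but the UQC input must be assembled direction by direction using condition (A) rather than applied uniformly to the whole action.

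The remaining step is to promote the algebraic model from linear to \emph{affine} and to upgrade the resulting H\"older conjugacy to $C^\infty$. For the affine part, the conservative hypothesis forces the pushforward of $\nu_M$ under the conjugacy to be an invariant absolutely continuous probability measure on $\TT^n$, which for a linear Anosov toral action must be Haar; a translation of $\TT^n$ then conjugates the action into affine form. For the smooth upgrade, I would combine the leafwise smoothness of coarse Lyapunov foliations from the second step with the cohomological rigidity arguments of \cite{KS07}. The main obstacle is this final smooth upgrade: because condition (A) only provides UQC one direction at a time, transverse regularity of the conjugacy along one coarse Lyapunov does not immediately transfer to regularity along the others, so the bootstrapping must proceed leaf-by-leaf and then glue using TNS, which is more delicate than in \cite{KS07} where the entire action is uniformly UQC.
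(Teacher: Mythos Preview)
Your plan has the right opening (use Lemma \ref{lemma: assp A+higher rank imply tns} to get TNS and resonance-free, then follow \cite{KS07}), but the execution is too vague at the decisive step and misses the paper's actual mechanism in two respects.

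First, the paper splits into cases according to whether some $E^i$ is one-dimensional. If $\dim E^i=1$ for some $i$, condition (A) gives a codimension-one Anosov element, so Franks--Newhouse forces $M$ to be a torus; Franks--Manning produces a topological conjugacy of that element (hence of the whole commuting action) to a linear one, and the smooth upgrade then comes from Rodriguez Hertz--Wang \cite{HW} using that a TNS linear action has no rank-one factor. You do not mention this route, and your ``equivariant smooth atlas'' step does not cover it.

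Second, and more seriously, in the case $\dim E^i>1$ for all $i$ your phrase ``construct a $\ZZ^k$-equivariant smooth atlas identifying $M^\ast$ with $\TT^n$'' is not a proof step; it is the conclusion. The paper's engine is the construction of a smooth $\alpha$-invariant affine connection $\nabla$ on $M$, after which Benoist--Labourie \cite{BL} applied to a single Anosov element yields a smooth conjugacy to an affine automorphism of an infranilmanifold (resonance-freeness then forces a torus cover, and commutativity takes the whole action to affine). To build $\nabla$ one first proves that every $W^i$ is a $C^\infty$ foliation (via smoothness of $h^{\hat i}$ from \cite{Sadovskaya} and repeated use of Lemma \ref{lemma: intersection regular}), and then upgrades the continuous $\alpha(a_i)$-invariant conformal structure on $E^i$ to a smooth metric $g^i$ with $\|D\alpha(a_i)v\|_{g^i}=e^{\chi_i(a_i)}\|v\|_{g^i}$ (Lemma \ref{lemma: exist good metric}). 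This lemma is precisely where the hypotheses ``conservative'' and ``all non-trivial elements Anosov'' enter: one passes to the suspension, picks a generic singular element in $\ker\chi_i$ whose flow is transitive, solves a cohomological equation for the Jacobian on $\tilde E^i$, and then bootstraps regularity via Liv\v{s}ic \cite{Llave}. Your proposal to use volume preservation by pushing $\nu_M$ to Haar and ``promoting linear to affine'' is not how the conservative assumption is used, and would not by itself produce smoothness of the conjugacy. The ``main obstacle'' you flag (gluing transverse regularity across coarse Lyapunov directions) is in fact handled early, at the level of smoothness of the $W^i$; the genuinely hard step is producing the metrics $g^i$, which you have not addressed.
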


\begin{remark} In \cite{KS07} the authors assumed that each element of the action is uniformly quasiconformal on each Lyapunov direction $E^i$. In the Theorem above we relax this condition to condition (A). We point out that  there are higher rank abelian actions $\al$ such that for each coarse Lyapunov distribution $E^i$, there exists $a_i$ such that $\al(a_i)$ uniformly quasiconformal on $E^i$ but most of the elements are not uniformly quasiconformal on $E^i$. For example consider the action $\al:\ZZ^2\to \Diff(\TT^8)$ generated by the matrices:
$$f=\begin{pmatrix}
C& 1\\
&C\\
&&D\\
&&&D
\end{pmatrix},
g=\begin{pmatrix}
C^2\\
&C^2\\
&&D^3&1\\
&&&D^3
\end{pmatrix}$$
where $C=\begin{pmatrix}
2&3\\1&2
\end{pmatrix}, D=\begin{pmatrix}
3&4\\2&3
\end{pmatrix}$. Then $T\TT^8=E^{1}\oplus E^{2}\oplus E^{3}\oplus E^4$, where $E^1,E^2,E^3,E^4$ are two dimensional (generalized) eigenspaces of $Df$ corresponding to  eigenvalues $2+\sqrt3, 2-\sqrt3, 3+2\sqrt2,3-2\sqrt2$ respectively. $g$ is uniformly quasiconformal on $E^1, E^2$ but not on $E^3, E^4$, $f$ is uniformly quasiconformal on $E^3, E^4$ but not on $E^1, E^2$. Notice that $$E^u_f=E^1\oplus E^3, E^u_{f^5g^{-2}}=E^1\oplus E^4$$ hence $\al$ is not a rank-one action.
\end{remark}

\begin{remark}
In Theorem \ref{main: K-S conservative}, if there are some $E^i$ such that $\dim E^i=1$ the condition that all non-trivial elelments of the action are Anosov is not necessary. Moreover in this case we can conclude $\al$ itself (not only its finite cover) is smoothly conjugated to a $\ZZ^k-$action formed by automorphisms of a torus.
\end{remark}

\subsection{Rank one results} \label{RankOne}
The result in Theorem \ref{main} is new even for the case of single diffeomorphism. Suppose $f:M\to M$ is a smooth volume preserving partially hyperbolic diffeomorphism with compact center foliation, then our arguments to prove Theorem \ref{main} imply the following theorems:
\begin{maintheorem}\label{main for single diffeo}
If $f$ is $r-$bunched for some $r\geq 1$ and $\dim E^u=\dim E^s=1$, then we have the following dichotomy:

i). $W^c(f)$ is pathological in the sense that one of the following holds:
\begin{enumerate}
\item $\nu_M$ does not have Lebesgue disintegration along $W^c$.
\item $W^c$ is not strongly absolutely continuous in $W^{cs}$.
\item $W^c$ is not strongly absolutely continuous in $W^{cu}$.
\end{enumerate}  

ii). $W^c(f)$ is a $C^r-$foliation. Moreover up to a double cover $f$ is $C^r-$fiber bundle extension over an Anosov diffeomorphism on a torus. If in addition $r=\infty$ or $\dim W^c=1$, then up to a double cover $f$ is a $C^\infty-$fiber bundle extension (or an isometric extension if $\dim E^c=1$) over an Anosov diffeomorphism on a torus.
\end{maintheorem}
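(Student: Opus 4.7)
The strategy is to apply Theorem \ref{main} to the rank-one action $\al : \ZZ \to \Diff^\infty_{\mathrm{vol}}(M)$ generated by $f$, and then identify the base quotient using the classification of Anosov diffeomorphisms on surfaces. The two Weyl chambers of $\ZZ$ are realized by $f$ and $f^{-1}$, and there are exactly two coarse Lyapunov distributions $E^u$, $E^s$, each one-dimensional. Uniform quasiconformality on a line bundle is automatic, so condition (A) is immediate with $E^u = E^u_f$ and $E^s = E^u_{f^{-1}}$. Condition (B) is the assumed $r$-bunching.

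Condition (C) requires \emph{trivial} holonomy along compact center leaves, whereas the hypothesis of Theorem \ref{main for single diffeo} only assumes compactness. The plan is first to upgrade compactness to uniform compactness (invoking Bohnet--Bonatti and Carrasco--Gogolev-type results in the codimension-two setting $\dim E^u = \dim E^s = 1$) and then to pass to a finite cover to trivialize the holonomy. This is precisely where the ``double cover'' in the statement enters. With (A), (B), (C) verified on that cover, Theorem \ref{main} applies.

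Alternative (1) of Theorem \ref{main} requires some coarse Lyapunov distribution $E^i$ with $\dim E^i = 1$ and $W^c$ pathological. Both of our coarse Lyapunov distributions are one-dimensional, so, taking $i \in \{u, s\}$ and unpacking the definition of \emph{pathological} from Section \ref{subsection: restate thm mainintro}, we recover precisely the three sub-cases listed in (i): failure of Lebesgue disintegration of $\nu_M$ along $W^c$, and failure of strong absolute continuity of $W^c$ inside $W^{cs}$ respectively $W^{cu}$. In alternative (2), Theorem \ref{main} hands us $\al$ as a $C^r$-fiber bundle extension $\pi : M \to M/W^c$ over an Anosov diffeomorphism $\bar f$. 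Since $\bar f$ inherits $\dim E^s_{\bar f} = \dim E^u_{\bar f} = 1$, its phase space is a closed surface carrying an Anosov map, and Franks--Newhouse forces $M/W^c \cong \TT^2$. The $C^\infty$ upgrade when $r = \infty$ and the isometric-extension refinement when $\dim E^c = 1$ come directly from Theorem \ref{main}(2)(a) and (2)(c).

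The hard part is not any of the rank-one dynamics but the bridge between the stated hypothesis (merely \emph{compact} center foliation) and condition (C) (uniform compactness together with trivial holonomy); this is exactly what dictates the appearance of the double cover in the conclusion. Once this reduction is secured, everything else is mechanical: Theorem \ref{main} does the heavy lifting, and Franks--Newhouse identifies the two-dimensional Anosov base with $\TT^2$.
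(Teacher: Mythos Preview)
Your approach is correct and coincides with the paper's: the paper derives the result directly from Proposition~\ref{prop: single diff, non-pathologic imply smoothness} (applied to both $f$ and $f^{-1}$) together with Lemmas~\ref{lemma: reg of holonomy and foliations} and~\ref{lemma: Foliation version Journe Lemma}, which is exactly what Theorem~\ref{main} unpacks to in the rank-one one-dimensional setting, and the trivial-holonomy-up-to-double-cover step is handled by citing Theorem~A of \cite{BoThesis}. The Franks--Newhouse identification of the two-dimensional Anosov base with $\TT^2$ that you spell out is left implicit in the paper.
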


\begin{maintheorem}\label{main for single diffeo'}
Suppose $f$ is $r-$bunched for some $r\geq 1$, $\dim E^u=1, \dim E^s>1$ and $f$ is uniformly quasiconformal on $E^s$. Moreover we assume $W^c(f)$ has uniformly compact leaves then we have the following dichotomy:

i). $W^c(f)$ is pathological in the sense that one of the following holds:
\begin{enumerate}
\item $\nu_M$ does not have Lebesgue disintegration along $W^c$.
\item $W^c$ is not strongly absolutely continuous in $W^{cu}$.
\end{enumerate}  

ii). $W^c(f)$ is a $C^r$ foliation. Moreover up to a double cover $f$ is $C^r-$fiber bundle extension over an Anosov diffeomorphism. If in addition $r=\infty$ or $\dim W^c=1$, then $f$ is a $C^\infty$ fiber bundle extension (or an isometric extension if $\dim E^c=1$) over an Anosov diffeomorphism.
\end{maintheorem}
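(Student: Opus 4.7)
Assume throughout that we are not in case (i), so $\nu_M$ disintegrates as Lebesgue measure along the leaves of $W^c$ and $W^c$ is strongly absolutely continuous inside $W^{cu}$. The plan is to promote these regularity properties to $C^r$-smoothness of $W^c$, and then pass to the quotient by $W^c$. The argument closely follows the rank-one strategy used in the proof of Theorem \ref{main for single diffeo} (which in turn refines the rank-one arguments of \cite{BX}), but the asymmetry $\dim E^u=1<\dim E^s$ requires replacing the would-be third pathology condition (non-strong-absolute-continuity in $W^{cs}$) by an automatic consequence of uniform quasiconformality on $E^s$.

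First I would treat the center-unstable side. Since $\dim E^u=1$, $f$ is automatically UQC on $E^u$, so inside each center-unstable leaf the holonomies of $W^c$ have a one-dimensional transverse direction with uniformly bounded conformal distortion. Combining the $r$-bunching with the assumed Lebesgue disintegration and strong absolute continuity of $W^c$ in $W^{cu}$, the Avila--Viana--Wilkinson-style smoothness mechanism (used for Theorem \ref{main for single diffeo} and adapted from \cite{AVW2,BX}) upgrades $W^c$ to a $C^r$ subfoliation of $W^{cu}$.

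Next I would handle the center-stable side, which is the main obstacle. Here $\dim E^s>1$, so nothing is automatic, and one must use the UQC assumption on $E^s$. A Kalinin--Sadovskaya-type argument, together with conservativity of $f$ and $r$-bunching, shows that UQC on $E^s$ forces the stable holonomies between center-stable plaques to be smooth; in particular, the center-stable holonomies of $W^c$ are absolutely continuous and the volume disintegrates along $W^c$ inside $W^{cs}$ in a Lebesgue fashion. Thus strong absolute continuity of $W^c$ in $W^{cs}$ is not an extra hypothesis but a consequence of UQC, which is why only two pathology cases appear in the statement. With this in hand, the same bunching-smoothness theorem applied inside $W^{cs}$ yields that $W^c$ is a $C^r$ subfoliation of $W^{cs}$.

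Finally, Journé's lemma combines $C^r$-smoothness of $W^c$ inside $W^{cu}$ and inside $W^{cs}$ to produce $C^r$-smoothness of $W^c$ as a foliation of $M$. Since the leaves are compact with trivial holonomy (by uniform compactness, after passing to the double cover needed to orient $E^u$), the quotient map $\pi: M\to M/W^c$ is a $C^r$-fiber bundle and $f$ descends to a diffeomorphism $\bar f$ of $M/W^c$, which is Anosov because $E^u\oplus E^s$ descends to the full tangent bundle of the base and inherits the hyperbolic splitting. For the $C^\infty$-claim under $\infty$-bunching one iterates the bunching-smoothness step, gaining one derivative at a time; when $\dim E^c=1$ the compact fibers are circles, and averaging a measurable $f$-invariant family of Riemannian metrics on the fibers (obtained from the Lebesgue disintegration) yields a smooth $f$-invariant family, so that $f$ is an isometric extension of $\bar f$.
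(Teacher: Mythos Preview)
Your outline is correct and matches the paper's strategy: Proposition~\ref{prop: single diff, non-pathologic imply smoothness} handles the $W^{cu}$ side (using $\dim E^u=1$, Lebesgue disintegration, and strong absolute continuity in $W^{cu}$), a separate argument handles the $W^{cs}$ side, and then Lemma~\ref{lemma: Foliation version Journe Lemma} (Journ\'e) glues the two to give $W^c$ as a $C^{r+}$ foliation of $M$.

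One clarification is worth making on the center--stable side. You describe this as a Kalinin--Sadovskaya-type argument first producing absolute continuity of $W^c$ in $W^{cs}$, and then applying ``the same bunching-smoothness theorem'' as on the $W^{cu}$ side. But the two sides genuinely use different mechanisms: the one-dimensional argument (Proposition~\ref{prop: single diff, non-pathologic imply smoothness}) does not apply when $\dim E^s>1$, and the paper instead invokes Proposition~\ref{prop: ac implies linearity of center fol} (applied to $f^{-1}$), which takes as input only UQC on $E^s$ with $\dim E^s>1$ together with the already-assumed Lebesgue disintegration of $\nu_M$ along $W^c$, and directly outputs uniform smoothness of the center holonomy between $W^s$-leaves and hence $C^{r+}$-regularity of $W^c$ inside $W^{cs}$. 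No intermediate strong-absolute-continuity step is needed or used. This is exactly why only two pathology conditions appear in the statement, as you correctly observed. Also, under $\infty$-bunching the conclusion $C^{r+}$ is already $C^\infty$ --- no iteration ``one derivative at a time'' is required --- and for $\dim E^c=1$ the paper obtains the invariant fiber metric by rescaling via the (continuous, then smooth) conditional density rather than by averaging.
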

When $\dim W^c=1$, a result similar to Theorem \ref{main for single diffeo} is proved in an unpublished note of Avila, Viana and Wilkinson \cite{AVW2}. For the case $\dim E^u,\dim E^s>1$ and $f$ is uniformly quasiconformal on $E^u$ and $E^s$, similar result is proved in \cite{BX} (in that case $W^c(f)$ cannot be pathological).

\subsection{Organization of the paper}
In Chapter \ref{section: basic notion} we define the regularity, absolute continuity and holonomy of foliations and give the bunching conditions for partially hyperbolic actions.  In Chapter \ref{section: preliminar} we state some useful facts from \cite{BB}, \cite{BX} and prove several useful results for absolutely continuity and regularity for dynamically defined foliations under suitable UQC and bunching assumptions. In Chapter \ref{section: proof of thm 1, dim>1} we prove Theorem \ref{main} when all $E^i$ are higher dimensional by applying the results in Chapter \ref{section: preliminar}. For the case that there exists $i$ such that $\dim E^i=1$, Theorem \ref{main} is proved in Chapter \ref{section: thm main dim ei=1} by considering the one dimensional version of arguments in \cite{BX}. In Chapter \ref{section: conservative KS} we give the proof of Theorem \ref{main: K-S conservative}. The last two chapters contain the proofs of Theorems \ref{main: intro}, \ref{main: global rigidity}, \ref{main: gl r dim 1}, \ref{main for single diffeo}, \ref{main for single diffeo'}.

\section{Basic notions and definitions}\label{section: basic notion}

\subsection{Trivial holonomy for foliation}\label{subsection: trivial holonomy for foliation}   The goal of this subsection is to define \textit{holonomy group} for a foliation. For more details see for example \cite{BoThesis, BB}.

Consider a $q-$codimensional foliation $\mathcal{F}$ in a manifold $M$. Suppose $x \in M$, $y\in \mathcal{F}_{loc}(x)$ and $D_x, D_y$ are two small discs transversal to $\mathcal{F}$. The \textit{local holonomy map} $h^{\mathcal{F}}_{x,y}:D_x\to D_y$ is defined as the following: for any $z\in D_x$, the local leaves $\mathcal{F}_z$ intersect $D_y$ in exactly one point which we denote by $h^{\mathcal{F}}_{x,y}(z)$. 

Moreover, for any continuous path $\gamma:[0,1]\to M$ lies entirely inside a leaf $\mathcal{F}(x)$, we could define $h^\mathcal{F}_\gamma$ the \textit{holonomy along }$\gamma$: we take a subdivision $0=t_0<\cdots <t_n=1$  such that $|t_i-t_{i-1}|$ small enough and a sequence of small discs $D(x_i)\ni x_i $ which are transverse to $\mathcal{F}$, here $x_i:= \gamma(t_i)$. By definition of local holonomy map,  $h^{\mathcal{F}}_{x_{k-1},x_k}: D(x_{k-1})\to D(x_k)$ is well-defined on a neighborhood of $x_k\in D(x_k)$. Then the holonomy along $\gamma$ 
\begin{equation}\label{def holonomy along path}
h^\mathcal{F}_\gamma:=h^{\mathcal{F}}_{x_{n-1},x_n}\circ\cdots\circ h^{\mathcal{F}}_{x_0,x_1}: D(x_0)\to D(x_n)
\end{equation} 
is also well-defined on a neighborhood of $x_0\in D(x_0)$. 

Now for a $x\in M$, we consider all closed paths $\gamma$ lie in the leaf $\mathcal{F}(x)$ start from $x$ a small disc $D(x)\ni x$ transverse to $\mathcal{F}(x)$. By identifying $D(x)$ with $\RR^q$,  we get a group homomorphism $$\pi_1  (\mathcal{F}(x), x ) \to \mathrm{Homeo}  (\mathbb{R}^q,  0)$$
 where $\mathrm{Homeo} (\mathbb{R}^q,  0)$ is the set of germs of homeomorphisms $R^q\to R^q$ which fix the origin (since the germ of $h^\mathcal{F}_\gamma$ only depends on the homotopy class of $\gamma$). The image of the homomorphism is called the \textit{holonomy
group} of the leaf $\mathcal{F}(x)$  and denoted by $\mathrm{Hol}(\mathcal{F}(x), x)$. A leaf $\mathcal{F}(x)$ of a foliation $\mathcal{F}$ \textit{has trivial holonomy (group)} if the holonomy group $\mathrm{Hol}(\mathcal{F}(x), x )$ for any $x\in L$  is a trivial
group. A foliation $\mathcal{F}$ is called \emph{with trivial holonomy} (or \emph{uniformly compact}) if every leaf is compact and has trivial (or finite respect.) holonomy group. Notice that for a foliation with trivial holonomy, the holonomy along path is actually independ on of the choice of the path. Therefore in this case we can define holonomy map for any two points on the same leaf between two transverse manifolds.

In the rest of paper we will meet the holonomies induced by different dynamically defined foliation, for example center and center-(un)stable holonomies of partially hyperbolic diffeomorphisms.


\subsection{Regularity of foliations}\label{subsection: reg of fol}
For $r \geq 0$ we write that a map is $C^r$ if it is $C^{[r]}$ and the $[r]$th-order derivatives are uniformly H\"older continuous of exponent $r-[r]$ and $C^{r+}$ if it is $C^{r+\varepsilon}$ for some $\varepsilon> 0$. For a foliation $\mathcal{W}$ of an $n$-dimensional smooth manifold $M$ by $k$-dimensional submanifolds we define $\mathcal{W}$ to be a $C^{r}$ foliation if for each $x \in M$ there is an open neighborhood $V_{x}$ of $x$ and a $C^{r}$ diffeomorphism $\Psi_{x}: V_{x} \rightarrow D^{k} \times D^{n-k} \subset \RR^{n}$ (where $D^{j}$ denotes the unit ball  in $\RR^{j}$) such that $\Psi_{x}$ maps $\mathcal{W}$ to the standard smooth foliation of $D^{k} \times D^{n-k}$ by $k$-disks $D^{k} \times \{y\}$, $y \in D^{n-k}$, $(V_x, \Psi_x)$ is called a foliation box. This is the notion of regularity of a foliation considered by Pugh, Shub, and Wilkinson in \cite{Pugh-Shub-Wilkinson}. Roughly speaking a foliation $\mathcal{W}$ in $M$ is $C^r$ if we could find $C^r-$foliation atlas $\{V_x\}_{x\in M}$ of $\mathcal{W}$ on $M$.

\subsection{Absolute continuity of foliation }\label{subsection: abs cont}
For a submanifold $S$ of $M$ we let $\nu_{S}$ be the induced Riemannian volume on $S$ from $M$. Suppose $\mathcal{F}_{i}, i=1,2$ are $d_i-$dimensional  foliations of manifold $M$ respectively and $\mathcal{F}_1$ subfoliates $\mathcal{F}_2$. We define $\mathcal{F}_1$ to be \emph{strongly absolutely continuous} in $\mathcal{F}_2$ if for any pair of nearby smooth transversal $(d_2-d_1)$-dimensional submanifolds $S$ and $S'$ within $\mathcal{F}_2$ for $\mathcal{F}_1$ the $\mathcal{F}_1$-holonomy map $h^{\mathcal{F}_1}: S \rightarrow S'$ is absolutely continuous with respect to the measures $\nu_{S}$ and $\nu_{S'}$. 

Every $C^{1}-$foliation is strongly absolutely continuous in the manifold, and the strong (un)stable manifolds of a partially hyperbolic diffeomophism are strongly absolutely continuous. What we call strong absolute continuity is the classical notion of absolute continuity used before (cf. \cite{BS}, \cite{AVW}), we define a weaker notion of absolute continuity as the following.

\begin{definition}\label{defn: ac}
The foliation $\mathcal{F}_1$ is called \emph{absolutely continuous} in $\mathcal{F}_2$ if for each $x \in M$ there is an open neighborhood $V$ of $x$ and a  foliation $\mathcal{W}$ strongly absolutely continuous   in $\mathcal{F}_2$ transverse to $\mathcal{F}_1$ (within $\mathcal{F}_2$) such that for any pair of points $y,z \in V$ the $\mathcal{F}_1$-holonomy map $h^{\mathcal{F}_1}: \mathcal{W}(y) \rightarrow \mathcal{W}(z)$ is absolutely continuous with respect to the measures $\nu_{\mathcal{W}}(y), \nu_{\mathcal{W}}(z)$ respectively. 
\end{definition}

We consider another weaker definition of absolute continuity of foliation in which is useful in the study of partially hyperbolic dynamics (cf. \cite{BX} and \cite{AVW}): suppose $\mathcal{F}_{1,2}$ are two foliations of manifold $M$ and $\mathcal{F}_1$ subfoliates $\mathcal{F}_2$. We say the Riemannian measure $\nu_{\mathcal{F}_2}$ \textit{has Lebesgue disintegration along $\mathcal{F}_1$} if given any measurable
set $Y$ contained in a leaf $\mathcal{F}_2(x)$, then $\nu_{\mathcal{F}_2}(x)(Y) = 0$ if and only if for $\nu_{\mathcal{F}_2}(x)-$almost every $z \in M$ the leaf
$\mathcal{F}_1(z)$ intersects $Y$ in a zero $\nu_{\mathcal{F}_1(z)}$-measure set. In other words, for any local foliation box $B$ of $\mathcal{F}_1$ on an arbitrary leaf $\mathcal{F}_2(x)$, for almost every leaf $\mathcal{F}_1(z)$ in the box $B$, the measure $(\nu_{\mathcal{F}_2(x)})|_B$ has conditional measure (with respect to the partition formed by foliation $\mathcal{F}_1$) equivalent to the Riemannian measure on $\mathcal{F}_1(z)$. 

Roughly speaking, the definition of Lebesgue disintegration guarantees that "Fubini's Nightmare" does not occur on any leaf of $\mathcal{F}_2$. The following result is standard, cf.  Proposition 16. in \cite{BX} or Proposition 6.2.2. in \cite{BS}.  As before we assume $\mathcal{F}_{i},i=1,2$ are two foliations of the manifold $M$ and $\mathcal{F}_1$ subfoliates $\mathcal{F}_2$.

\begin{lemma}\label{lemma: transverse ac}
Suppose $\mathcal{F}_1$ is absolutely continuous in $\mathcal{F}_2$. Then $\nu_{\mathcal{F}_2}$ has Lebesgue disintegration along $\mathcal{F}_1$.  
\end{lemma}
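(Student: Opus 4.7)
The plan is to work locally inside one plaque of $\mathcal{F}_{2}$ and to use the auxiliary foliation $\mathcal{W}$ supplied by Definition~2.1 as a Fubini-type coordinate system. First I would fix $x\in M$, take the neighborhood $V$ and the foliation $\mathcal{W}$ provided by the absolute-continuity hypothesis, restrict to a single $\mathcal{F}_{2}$-plaque $P\subset V$, pick a basepoint $y_{0}\in P$, and set $T:=\mathcal{W}(y_{0})\cap V$ and $\Sigma:=\mathcal{F}_{1}(y_{0})\cap V$. Since $\mathcal{W}$ and $\mathcal{F}_{1}$ are transverse subfoliations of $\mathcal{F}_{2}$, after possibly shrinking $V$ the map
\begin{equation}
\Psi: T \times \Sigma \longrightarrow P, \qquad \Psi(t,s) := \mathcal{F}_{1}(t)\cap\mathcal{W}(s),
\end{equation}
is a homeomorphism whose slices $\Psi(T\times\{s\})$ are $\mathcal{W}$-plaques and whose slices $\Psi(\{t\}\times\Sigma)$ are $\mathcal{F}_{1}$-plaques.

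Next I would invoke the classical fact that strong absolute continuity of a subfoliation implies Lebesgue disintegration of the ambient measure along that subfoliation (the easy, well-known half of the present lemma; see e.g.\ \cite[Prop.~6.2.2]{BS}). Applied to $\mathcal{W}$ inside $\mathcal{F}_{2}$, this produces a Lebesgue disintegration of $\nu_{P}$ along $\mathcal{W}$-plaques. Reading this disintegration in the $(t,s)$ coordinates of $\Psi$: for each fixed $s$, the map $t\mapsto\Psi(t,s)$ is precisely the $\mathcal{F}_{1}$-holonomy $h^{\mathcal{F}_{1}}_{T,\mathcal{W}(s)}:T\to\mathcal{W}(s)$, and the absolute continuity of this holonomy supplied by Definition~2.1 (together with that of its inverse) identifies the conditional of $\Psi^{*}\nu_{P}$ on $T\times\{s\}$ with a measure equivalent to $\nu_{T}$. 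Combining this with the $\mathcal{W}$-disintegration yields a positive measurable density $J$ on $T\times\Sigma$ with
\begin{equation}
\Psi^{*}\nu_{P} \;=\; J(t,s)\, d\nu_{T}(t)\, d\nu_{\Sigma}(s).
\end{equation}

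With $\nu_{P}$ now represented as an equivalent product measure on $T\times\Sigma$, the remainder is classical Fubini. Given $Y\subset P$ with $\nu_{P}(Y)=0$, Fubini gives that for $\nu_{T}$-a.e.\ $t\in T$ the fiber $\{s\in\Sigma:\Psi(t,s)\in Y\}$ is $\nu_{\Sigma}$-null. For each such $t$ the map $s\mapsto\Psi(t,s)$ is the $\mathcal{W}$-holonomy $\Sigma\to\mathcal{F}_{1}(t)$, which by strong absolute continuity of $\mathcal{W}$ is absolutely continuous with absolutely continuous inverse; hence $Y\cap\mathcal{F}_{1}(t)$ is $\nu_{\mathcal{F}_{1}(t)}$-null for $\nu_{T}$-a.e.\ $t$. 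Converting ``$\nu_{T}$-a.e.\ $t$'' to ``$\nu_{P}$-a.e.\ $z\in P$'' uses the $\mathcal{W}$-disintegration once more together with absolute continuity of the $\mathcal{F}_{1}$-holonomy between $\mathcal{W}$-transversals. The converse implication in the ``if and only if'' is immediate from positivity of $J$. The main technical delicacy is extracting a genuine equivalence of measures (rather than only one-sided absolute continuity) from the two distinct holonomy-level hypotheses, which is why one must use both the strong absolute continuity of $\mathcal{W}$ and the invertibility and bidirectional absolute continuity of the $\mathcal{F}_{1}$-holonomy between $\mathcal{W}$-transversals.
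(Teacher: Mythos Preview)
Your proposal is correct and is precisely the standard Fubini-type argument that the paper has in mind: the paper does not actually prove this lemma but simply cites Proposition~16 in \cite{BX} and Proposition~6.2.2 in \cite{BS}, both of which proceed exactly as you do---disintegrate $\nu_{P}$ along the strongly absolutely continuous transverse foliation $\mathcal{W}$, then use absolute continuity of the $\mathcal{F}_{1}$-holonomy between $\mathcal{W}$-plaques to pass to a product form and apply Fubini. Your identification of the coordinate slices with the two holonomies and your handling of the bidirectional absolute continuity are both right; the only minor comment is that the equivalence of the quotient measure on the leaf space of $\mathcal{W}$ with $\nu_{\Sigma}$ (needed for the displayed product formula) also uses strong absolute continuity of $\mathcal{W}$, which you implicitly invoke but could make explicit.
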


\subsection{Bunching condition for partially hyperbolic diffeomorphisms and actions}\label{subsection: bunching cond}Let $f$ be a smooth
partially hyperbolic diffeomorphism then $W^{u,s}-$foliations have uniformly smooth leaves, but in general we have no smoothness for leaves of $W^{c,cs,cu}$. 

For $r > 0$, we say that $f$ is $r-$bunched
if there exists an integer $k\geq 1$ such that for any point $p\in M$:
\begin{eqnarray*}
\|D_pf^k|_{E^s}\|\cdot \|(D_pf^k_{E^c})^{-1}\|^r<1,\quad \|(D_pf^k|_{E^u})^{-1}\|\cdot \|D_pf^k|_{E^c}\|^r<1; \\
\|D_pf^k|_{E^s}\|\cdot \|(D_pf^k_{E^c})^{-1}\|\cdot \|D_pf^k|_{E^c}\|^r<1;\\
\|(D_pf^k|_{E^u})^{-1}\|\cdot \|D_pf^k|_{E^c}\|\cdot \|(D_pf^k|_{E^c})^{-1}\|^r<1
\end{eqnarray*}
We say $f$ is $\infty-$bunched if it is $r-$bunched for any $r>0$. Notice that every partially hyperbolic diffeomorphism is $r-$bunched, for
some $r > 0$ and $1-$bunching is the \textit{center bunching} condition considered by Burns and Wilkinson in their proof of
the ergodicity of accessible, volume-preserving, center-bunched $C^2$ partially hyperbolic
diffeomorphisms \cite{BW10}. When $f$ is $C^r$
and dynamically coherent (will be defined later), these inequalities ensure that the leaves of $W^{cu}$,
$W^{cs}$ and $W^c$ are uniformly $C^{r+}$, cf. \cite{Pugh-Shub-Wilkinson} or \cite{AVW}. A natural situation in which
the $\infty$-bunching condition holds is when there is a continuous Riemannian metric
on $E^c$ with respect to which $Df|_{E^c}$ is an isometry.

A partially hyperbolic action $\alpha$ is called $r-$bunched if each Weyl chamber contains a partially hyperbolic element $a$ such that $\al(a)$ is $r-$bunched with respect to the dominatted splitting $E^u_a\oplus E^c\oplus E^s_a$.

\begin{remark}In fact, for $\ZZ^k-$partially hyperbolic action, $k>1$, some inequalities in $r-$bunching condition may not be necessary, see the discussion about $r-$bunching condition in \cite{DX}. 
\end{remark}



\section{Preliminaries}\label{section: preliminar}
\subsection{Uniformly compact center foliation and dynamical coherence for partially hyperbolic dynamics}\label{subsection: uniform compa cent}
Bohnet and Bonatti in \cite{BB} proved a useful result for integrablity of center-(un)stable bundles of a partially hyperbolic system. We say a partially hyperbolic diffeomorphism $f$ is \emph{dynamically coherent} if there are $f$-invariant foliations $\mathcal{W}^{cs}$ and $\mathcal{W}^{cu}$ with $C^{1}$ leaves which are tangent to $E^{s} \oplus E^{c}$ and $E^{c} \oplus E^{u}$ respectively. 

By \cite{BW08}, $f$ has a center foliation if $f$ is dynamically coherent. Furthermore, $\mathcal{W}^c$ and $\mathcal{W}^u$ subfoliate $\mathcal{W}^{cu}$, $\mathcal{W}^c$ and $\mathcal{W}^s$ subfoliate $\mathcal{W}^{cs}$. The converse is false in general since there exists example of partially hyperbolic diffeomorphism with an integrable center bundle but not dynamical coherent \cite{HRHU10}. In \cite{BB}, the authors proved the converse is true under the assumption of uniform compacticity of center foliation. 

\begin{lemma}\label{lemma: uniform comp imply dyn coh} 
Suppose $f:M\to M$ is a partially hyperbolic diffeomorphism. If $W^c(f)$ exists and is uniformly compact then $f$ is dynamical coherent.
\end{lemma}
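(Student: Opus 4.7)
My strategy follows the approach of Bohnet--Bonatti \cite{BB}: construct the center-stable foliation $W^{cs}$ (and $W^{cu}$ by symmetry) directly as the saturation of the stable foliation along compact center leaves, and then verify that the resulting object is an $f$-invariant foliation with $C^1$ leaves tangent to $E^{cs}=E^s\oplus E^c$.

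First, I would pass to a suitable finite cover of $M$ to reduce to the case in which every center leaf has trivial holonomy. Uniform compactness provides compact leaves with finite holonomy groups, and the standard lift to the total space of the holonomy cover produces such an $M^\ast$; since dynamical coherence is invariant under finite covers, this reduction is harmless. After the reduction the leaf space $M/W^c$ is a topological manifold and $\pi\colon M\to M/W^c$ is a topological fiber bundle with compact fibers, which gives a clean local product structure inside tubular neighborhoods of center leaves.

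Next, for each $x\in M$ I would define the local center-stable plaque
\[
W^{cs}_{loc}(x)\;:=\;\bigcup_{y\in W^c(x)} W^s_{loc}(y),
\]
and show it is a continuous embedded submanifold of the expected dimension. The crux is the well-definedness of a \emph{stable holonomy} between nearby center leaves: given a small transversal $D_x$ to $W^c(x)$ at $x$, each $y\in W^c(x)$ has a unique nearby point in a transversal through a neighboring leaf obtained by intersecting $W^s_{loc}(y)$; the trivial-holonomy hypothesis ensures that as $y$ traverses the compact leaf $W^c(x)$ this assignment is single-valued and continuous, and does not depend on the path. Tangency of $W^{cs}_{loc}(x)$ to $E^{cs}$ is then forced by continuity of the splitting, and $f$-invariance is automatic from $f W^c=W^c$ and $fW^s=W^s$. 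To upgrade from continuous to $C^1$ leaves I would apply the Hadamard--Perron graph transform inside each local product chart around a compact center leaf: $E^s$ is uniformly contracted and dominates $E^c$, so one obtains a unique invariant $C^1$ graph through $x$ tangent to $E^{cs}$, which must coincide with $W^{cs}_{loc}(x)$.

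The main obstacle is precisely the well-definedness step in the display above: without the uniform-compactness hypothesis the saturation could self-intersect or fail to be a manifold, as in the Hertz--Rodriguez Hertz--Ures example \cite{HRHU10}. The finite (hence, after passing to the cover, trivial) holonomy hypothesis is what rules out this pathology by forcing nearby center leaves to be genuinely disjoint and close to $W^c(x)$ in the Hausdorff sense, so that the stable holonomy closes up along the compact leaf instead of spiraling into neighboring leaves.
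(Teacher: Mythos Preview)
The paper does not prove this lemma; it merely records it as a consequence of \cite{BB}. Your outline is broadly in the spirit of the Bohnet--Bonatti argument, but your first reduction step contains a genuine gap: the existence of a \emph{global} finite cover of $M$ on which every center leaf has trivial holonomy is not known in general. Indeed, the very paper you are working in lists this as an open conjecture in the second remark following conditions (A)--(C) in Section~\ref{subsection: def ABC} (citing \cite{BB, Carrasco, Gogolev}). You therefore cannot invoke it as a preliminary reduction, and the sentence ``the standard lift to the total space of the holonomy cover produces such an $M^\ast$'' does not do what you want: the holonomy cover is a leafwise construction, and gluing these into a single finite cover of $M$ is precisely the unresolved point.

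The actual argument in \cite{BB} avoids this by working with the finite-holonomy hypothesis directly. Reeb stability for a compact leaf with finite holonomy group $\Gamma$ provides a saturated tubular neighborhood of $W^c(x)$ modeled on a finite quotient $(\widetilde{L}\times D)/\Gamma$, where $\widetilde{L}$ is a finite cover of the leaf on which the holonomy trivializes; one carries out the saturation-by-stable-leaves construction upstairs in $\widetilde{L}\times D$ and then pushes down. This replaces your single global cover by local ones, which always exist. Once that step is repaired, the remainder of your sketch --- defining $W^{cs}_{loc}(x)$ as $\bigcup_{y\in W^c(x)} W^s_{loc}(y)$, checking that stable holonomy between nearby center leaves closes up around the compact leaf, and deducing $f$-invariance --- matches the strategy of \cite{BB}.
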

\subsection{Absolutely continuity, regularity of foliations and associated holonomy maps} 

The following lemma will be used several times later. 

\begin{lemma}\label{lemma: leb disintegration filtration}Suppose $\mathcal{F}_{i},i=1,2,3$ are foliations of manifold $M$ and $\mathcal{F}_1$ subfoliates $\mathcal{F}_2$, $\mathcal{F}_2$ subfoliates $\mathcal{F}_3$. If $\nu_{\mathcal{F}_3}$ has Lebesgue disintegration along $\mathcal{F}_2$ and $\nu_{\mathcal{F}_2}$ has Lebesgue disintegration along $\mathcal{F}_1$, then $\nu_{\mathcal{F}_3}$ has Lebesgue disintegration along $\mathcal{F}_1$.
\end{lemma}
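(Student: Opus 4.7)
The plan is to argue by chasing the definition twice, using the Lebesgue disintegration hypothesis for $(\mathcal{F}_3,\mathcal{F}_2)$ to collapse the iterated ``almost every'' statements produced by the hypothesis for $(\mathcal{F}_2,\mathcal{F}_1)$ into a single ``almost every'' statement with respect to $\nu_{\mathcal{F}_3(x)}$. Fix a leaf $\mathcal{F}_3(x)$ and a measurable subset $Y\subset \mathcal{F}_3(x)$; we must show that $\nu_{\mathcal{F}_3(x)}(Y)=0$ if and only if $\nu_{\mathcal{F}_1(z)}(Y\cap \mathcal{F}_1(z))=0$ for $\nu_{\mathcal{F}_3(x)}$-a.e.\ $z$.

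First I would handle the forward direction. Assume $\nu_{\mathcal{F}_3(x)}(Y)=0$. By Lebesgue disintegration of $\nu_{\mathcal{F}_3}$ along $\mathcal{F}_2$, there is a full $\nu_{\mathcal{F}_3(x)}$-measure set $G_2\subset \mathcal{F}_3(x)$ such that for every $y\in G_2$ one has $\nu_{\mathcal{F}_2(y)}(Y\cap \mathcal{F}_2(y))=0$. Apply the second disintegration hypothesis to each such leaf $\mathcal{F}_2(y)$: for every $y\in G_2$ there is a full $\nu_{\mathcal{F}_2(y)}$-measure set $G_1(y)\subset \mathcal{F}_2(y)$ on which $\nu_{\mathcal{F}_1(z)}(Y\cap \mathcal{F}_1(z))=0$ (note $\mathcal{F}_1(z)\subset \mathcal{F}_2(y)$ when $z\in \mathcal{F}_2(y)$, so the two intersections agree). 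Let
\[
B=\{z\in \mathcal{F}_3(x):\nu_{\mathcal{F}_1(z)}(Y\cap \mathcal{F}_1(z))>0\}.
\]
Then for every $y\in G_2$ we have $B\cap \mathcal{F}_2(y)\subset \mathcal{F}_2(y)\setminus G_1(y)$, which has $\nu_{\mathcal{F}_2(y)}$-measure zero. Applying the Lebesgue disintegration of $\nu_{\mathcal{F}_3(x)}$ along $\mathcal{F}_2$ in the other direction (the ``if'' half of the definition), $\nu_{\mathcal{F}_2(y)}(B\cap \mathcal{F}_2(y))=0$ on a full $\nu_{\mathcal{F}_3(x)}$-measure set of $y$ forces $\nu_{\mathcal{F}_3(x)}(B)=0$, which is exactly the desired conclusion.

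The converse is obtained by reversing the same chain. Suppose $\nu_{\mathcal{F}_1(z)}(Y\cap \mathcal{F}_1(z))=0$ for $\nu_{\mathcal{F}_3(x)}$-a.e.\ $z$. Using the disintegration of $\nu_{\mathcal{F}_3}$ along $\mathcal{F}_2$ applied to the set $B$ above (which now has $\nu_{\mathcal{F}_3(x)}$-measure zero), for $\nu_{\mathcal{F}_3(x)}$-a.e.\ $y$ the intersection $B\cap \mathcal{F}_2(y)$ has $\nu_{\mathcal{F}_2(y)}$-measure zero. For every such $y$, the disintegration of $\nu_{\mathcal{F}_2}$ along $\mathcal{F}_1$ then yields $\nu_{\mathcal{F}_2(y)}(Y\cap \mathcal{F}_2(y))=0$, and finally the disintegration of $\nu_{\mathcal{F}_3}$ along $\mathcal{F}_2$ gives $\nu_{\mathcal{F}_3(x)}(Y)=0$.

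The proof is essentially measure-theoretic bookkeeping and the only real subtlety is making sure the two nested ``almost every'' quantifiers collapse correctly; this is the step that genuinely uses that the outer disintegration is with respect to the \emph{same} measure $\nu_{\mathcal{F}_3(x)}$ that appears in the conclusion, i.e.\ that the ``if and only if'' in the definition of Lebesgue disintegration is applied to both the set $Y$ and the set $B$. No regularity or holonomy of the foliations is needed, only the stated disintegration properties.
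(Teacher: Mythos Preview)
Your proof is correct and follows essentially the same approach as the paper: both argue by chasing the definition, applying the two disintegration hypotheses in succession and then invoking the $(\mathcal{F}_3,\mathcal{F}_2)$ disintegration once more to collapse the nested ``almost every'' quantifiers into a single one with respect to $\nu_{\mathcal{F}_3(x)}$. The paper additionally remarks that one may work locally so that the partitions induced by the foliations are measurable, but otherwise the arguments are the same measure-theoretic bookkeeping; your introduction of the set $B$ makes the collapsing step slightly more explicit than in the paper.
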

\begin{proof}The proof is standard, for completeness we give the details here. We only need to prove Lemma \ref{lemma: leb disintegration filtration} locally. So without loss of generality we could assume the measurable partitions induced by the foliations are measurable. For any measurable set $Z$ contained in an arbitrary leaf $\mathcal{F}_1(x)$, if $\nu_{\mathcal{F}_1(x)}(Z)=0$, then since $\nu_{\mathcal{F}_1}$ has Lebesgue disintegration along $\mathcal{F}_2$, for almost every $\mathcal{F}_2(y)$ leaf in $\mathcal{F}_1(x)$, 
\begin{equation}\label{eqn: in proof Lemma Lebesgue}
\nu_{\mathcal{F}_2(y)}(\mathcal{F}_2(y)\cap Z)=0
\end{equation} Moreover since $\mu_{\mathcal{F}_2}$ has Lebesgue disintegration along $\mathcal{F}_3$, then for any leaf $\mathcal{F}_2(y)$ satisfying \eqref{eqn: in proof Lemma Lebesgue}, for $\nu_{\mathcal{F}_2(y)}-$almost every $z$, $\nu_{\mathcal{F}_3(z)}(\mathcal{F}_3(z)\cap Z)=0$, therefore since $\nu_{\mathcal{F}_1}$ has Lebesgue disintegration along $\mathcal{F}_2$, we have that for $\nu_{\mathcal{F}_1(x)}$ almost every $z$, $\nu_{\mathcal{F}_3(z)}(\mathcal{F}_3(z)\cap Z)=0$. 

On the other hand, if for $\nu_{\mathcal{F}_1(x)}$ almost every $z$, 
\begin{equation}\label{eqn: 2 in lemma leb}\nu_{\mathcal{F}_3(z)}(\mathcal{F}_3(z)\cap Z)=0
\end{equation}
we hope to prove $\nu_{\mathcal{F}_1(x)}(Z)=0$. Since $\nu_{\mathcal{F}_1}$ has Lebesgue disintegration along $\mathcal{F}_2$, we only need to prove for almost every leaf $\mathcal{F}_2(y)$, \eqref{eqn: in proof Lemma Lebesgue} holds. But we know the set of $z$ which satisfy \eqref{eqn: 2 in lemma leb} has full $\nu_{\mathcal{F}_2(y)}-$measure for almost every leaf $\mathcal{F}_2(y)$, and on such $\mathcal{F}_2(y)$ since $\nu_{\mathcal{F}_2(y)}$ has Lebesgue disintegration along $\mathcal{F}_3$, then by \eqref{eqn: 2 in lemma leb} we know \eqref{eqn: in proof Lemma Lebesgue} holds. Therefore we have $\nu_{\mathcal F_1(x)}(Z)=0$.

\end{proof}

To prove the regularity of dynamically defined foliation, a powerful tool is to consider the associated holonomy map.

\begin{lemma}\label{lemma: reg of holonomy and foliations}Suppose $\mathcal{W},\mathcal{F},\mathcal{L}$ are foliations of $M$, $\mathcal{W},\mathcal{F}$ subfoliate $\mathcal{L}$ and $\mathcal{W}$ transversal to $\mathcal{F}$ (within $\mathcal{L}$). And we set $h=h^{\mathcal{W}}$ the holonomy within $\mathcal{L}$ along $\mathcal{W}$ between the leaves of $\mathcal{F}$. If $\mathcal{F},\mathcal{W},\mathcal{L}$ have uniformly $C^{r+}$ leaves and $h$ is uniformly $C^{r+}$, then $\mathcal{W}$ is a $C^{r+}-$foliation within $\mathcal{L}$ (in the sense that on every leaf of $\mathcal{L}$ we can find a $C^r$ foliation atlas for $\mathcal{W}$).

\end{lemma}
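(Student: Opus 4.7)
The plan is to construct an explicit $C^{r+}$ foliation chart for $\mathcal{W}|_L$ on each leaf $L$ of $\mathcal{L}$, using the holonomy $h^{\mathcal{W}}$ to transport a fixed transversal to nearby leaves of $\mathcal{W}$.

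Fix a leaf $L$ of $\mathcal{L}$. Since $L$ is (uniformly) $C^{r+}$, and $\mathcal{W}|_L$, $\mathcal{F}|_L$ are transverse topological subfoliations of $L$ with $C^{r+}$ leaves, the local picture near each $x \in L$ has two transverse $C^{r+}$ submanifolds of $L$: small smooth discs $T \subset \mathcal{F}_{\mathrm{loc}}(x)$ and $S \subset \mathcal{W}_{\mathrm{loc}}(x)$, parametrized by Euclidean balls of dimensions $\dim\mathcal{F}$ and $\dim\mathcal{W}$. I would define
\begin{equation*}
\Phi : S \times T \longrightarrow L, \qquad \Phi(s,t) := h^{\mathcal{W}}_{x,s}(t),
\end{equation*}
which is well-defined on a small enough neighborhood of $(x,x)$ because $h$ is defined between nearby $\mathcal{F}$-leaves along $\mathcal{W}$. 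By construction, for fixed $s$ the slice $\Phi(s,\cdot)$ lies in $\mathcal{F}(s)$, while for fixed $t$ the slice $\Phi(\cdot,t)$ lies in the leaf $\mathcal{W}(t)$. Thus $\Phi$ carries the trivial product foliation $\{(s,\cdot)\}$ onto the leaves of $\mathcal{W}|_L$, so if $\Phi$ is a $C^{r+}$ diffeomorphism onto its image, then $\Phi^{-1}$ is the desired foliation chart, and varying $x$ and shrinking the domain gives an atlas whose transition maps are $C^{r+}$ by the uniformity hypotheses.

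The core step is to show $\Phi$ is jointly $C^{r+}$. I would argue separately along the two transverse directions of $L$. First, for fixed $s$, the map $t \mapsto \Phi(s,t)$ is literally the holonomy $h^{\mathcal{W}}_{x,s}$ restricted to $T$, which is $C^{r+}$ with uniform bounds by hypothesis. Second, for fixed $t$, the map $s \mapsto \Phi(s,t)$ has image in the $C^{r+}$ leaf $\mathcal{W}(t)$; composing with a smooth chart of $\mathcal{W}(t)$ one may use the cocycle identity
\begin{equation*}
h^{\mathcal{W}}_{x,s'}(t) = h^{\mathcal{W}}_{s,s'}\!\left(h^{\mathcal{W}}_{x,s}(t)\right),
\end{equation*}
together with the uniform $C^{r+}$ bounds on the family $\{h^{\mathcal{W}}_{s,s'}\}$ and the $C^{r+}$ parametrization of $\mathcal{W}(x)$ by $S$, to convert dependence on the endpoint $s$ into dependence on the transverse variable of a nearby holonomy map, giving $C^{r+}$ regularity in $s$. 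With uniform $C^{r+}$ regularity established along each of the two transverse foliations of $L$, I would invoke Journ\'e's lemma to conclude that $\Phi$ itself is jointly $C^{r+}$ on $S \times T$. Since $\Phi$ is a local homeomorphism onto a neighborhood of $x$ in $L$ (by transversality of $\mathcal{W}$ and $\mathcal{F}$ inside $L$) and its differential at $(x,x)$ splits into two transverse $C^{r+}$ directions, $\Phi$ is a $C^{r+}$ diffeomorphism onto its image.

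The main obstacle is the second of these regularity statements: the raw hypothesis that each $h^{\mathcal{W}}_{x,s}$ is uniformly $C^{r+}$ controls the dependence on the transverse variable $t$ but not a priori on the basepoint $s$. The key is that moving $s$ along the $C^{r+}$ leaf $\mathcal{W}(x)$ can be absorbed, via the cocycle relation, into a holonomy whose transverse variable is the varying parameter, so uniformity in the basepoint is what does the work. Once this is in place, Journ\'e's lemma closes the gap between leafwise regularity and joint regularity, and repeating the construction over all $x$ (with uniform estimates from the uniformity hypotheses on the leaves and on $h$) produces a $C^{r+}$ foliation atlas for $\mathcal{W}$ within every leaf of $\mathcal{L}$.
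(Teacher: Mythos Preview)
Your overall plan---build a chart from the $\mathcal{W}$-holonomy and close with Journ\'e---is exactly the route the paper has in mind. But there is a real gap in your ``regularity in $s$'' step. For fixed $t$, the map $s\mapsto \Phi(s,t)=h^{\mathcal W}_{x,s}(t)$ is nothing other than the $\mathcal F$-holonomy from $\mathcal W(x)$ to $\mathcal W(t)$: it sends $s\in\mathcal W(x)$ to $\mathcal F(s)\cap \mathcal W(t)$. The hypotheses say nothing about the regularity of $\mathcal F$-holonomy between $\mathcal W$-leaves, and your cocycle identity $h^{\mathcal W}_{x,s'}=h^{\mathcal W}_{s,s'}\circ h^{\mathcal W}_{x,s}$ only rewrites one ``holonomy with moving endpoint'' as another of the same type; it does not convert dependence on the endpoint into dependence on the transverse variable. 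Likewise, uniform $C^{r+}$ bounds on the family $\{h^{\mathcal W}_{x,s}\}$ do not by themselves give any regularity in the basepoint $s$ (think of $f_s(t)=g(s)\,t$ with $g$ bounded but merely measurable). So the claimed $C^{r+}$ regularity of $\Phi$ in $s$ is not established, and hence your application of Journ\'e to $\Phi$ does not go through.

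The fix is to apply Journ\'e inside $L$ rather than on $S\times T$, and only to the coordinate you actually need. Define $\pi_T:U\subset L\to T$ by $\pi_T(p)=\mathcal W(p)\cap T$. Along each $\mathcal F$-leaf this is exactly $h^{\mathcal W}$ back to $\mathcal F(x)$, hence uniformly $C^{r+}$ by hypothesis; along each $\mathcal W$-leaf it is constant. Since $\mathcal F$ and $\mathcal W$ are transverse and have uniformly $C^{r+}$ leaves, Journ\'e's lemma gives that $\pi_T$ is $C^{r+}$ on $U$. It is a submersion whose level sets are the $\mathcal W$-plaques. Now take any $C^{r+}$ chart $\psi$ on $L$ near $x$ and let $\pi_1$ be a linear projection with $d(\pi_1\circ\psi)|_{T_x\mathcal W}$ invertible; then $(\pi_1\circ\psi,\pi_T)$ is a $C^{r+}$ local diffeomorphism sending $\mathcal W$-plaques to horizontal slices, i.e.\ a $C^{r+}$ foliation chart for $\mathcal W$ in $L$. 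This avoids any appeal to the regularity of $h^{\mathcal F}$, which is precisely the information you were missing.
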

\begin{proof}For the case $\mathcal{L}=M$, the proof basicially is an application of Journ\'e Lemma \cite{Journe} which can be found in \cite{BX}, \cite{Sadovskaya}, \cite{Pugh-Shub-Wilkinson}, etc. The same argument also works here.
\end{proof}

The following is another standard result for regularity of foliation:

\begin{lemma}\label{lemma: intersection regular}Suppose $\mathcal{F}_i, i=1,2$ are two transverse $C^r-$foliations in $M$. Then $\mathcal{F}_1\cap \mathcal{F}_2$ is also a $C^r$ foliation.
\end{lemma}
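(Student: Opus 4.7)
The plan is to straighten the intersection foliation locally via the submersion description of $C^{r}$ foliations. Fix an arbitrary point $x\in M$, set $n=\dim M$, $k_i=\dim\mathcal{F}_i$, and use the foliation box definition from Section \ref{subsection: reg of fol} to produce a common neighborhood $U\ni x$ together with two $C^{r}$ submersions $f_i\colon U\to \RR^{\,n-k_i}$ whose level sets are exactly the plaques of $\mathcal{F}_i$ in $U$ (namely, the ``transversal coordinate'' of each $C^{r}$ foliation chart).

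Next I would assemble the product map
\[
F=(f_1,f_2)\colon U\longrightarrow \RR^{\,n-k_1}\oplus\RR^{\,n-k_2}.
\]
The transversality hypothesis $T\mathcal{F}_1+T\mathcal{F}_2=TM$ gives $\ker dF_y=T_y\mathcal{F}_1\cap T_y\mathcal{F}_2$, a distribution of constant dimension $d:=k_1+k_2-n$; by rank--nullity the image of $dF_y$ has dimension $n-d=2n-k_1-k_2$, which equals the dimension of the codomain, so $F$ is a $C^{r}$ submersion on $U$. The connected components of the fibers $F^{-1}(c_1,c_2)$ are precisely the connected components of $f_1^{-1}(c_1)\cap f_2^{-1}(c_2)$, i.e.\ the plaques of the intersection $\mathcal{F}_1\cap\mathcal{F}_2$.

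Finally I would apply the $C^{r}$ submersion normal form (the Hölder version of the implicit function theorem, valid for non-integer $r$) to obtain $C^{r}$ coordinates near $x$ in which $F$ becomes a linear projection onto the last $2n-k_1-k_2$ coordinates. These are exactly foliation-box coordinates for $\mathcal{F}_1\cap\mathcal{F}_2$ in the sense of Section \ref{subsection: reg of fol}, so varying $x$ shows that $\mathcal{F}_1\cap\mathcal{F}_2$ is $C^{r}$.

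The only mildly nontrivial point is the $C^{r}$ implicit function theorem for non-integer $r$, i.e.\ ensuring that the straightening diffeomorphism inherits the Hölder derivative regularity of $F$; this is standard (the inverse function theorem preserves $C^{[r],\,r-[r]}$ regularity) and is the step where the argument would be spelled out in a full write-up. Everything else is a direct unravelling of the transversality hypothesis and the definitions.
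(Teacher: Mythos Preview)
Your argument is correct and rests on the same underlying idea as the paper's proof---both reduce to the $C^{r}$ submersion normal form / implicit function theorem---but your packaging is cleaner. The paper proceeds sequentially: it first takes a foliation box straightening $\mathcal{F}_1$, then uses the implicit function theorem to write the leaves of $\mathcal{F}_2$ as graphs of $C^{r}$ functions $\varphi_1,\dots,\varphi_{n-d_2}$, and finally writes down an explicit change of variables $\Phi(x)=(x_1-\varphi_1(x),\dots,x_{n-d_2}-\varphi_{n-d_2}(x),x_{n-d_2+1},\dots,x_n)$ that straightens both foliations simultaneously. You instead take the two defining submersions $f_1,f_2$, form the product $F=(f_1,f_2)$, note that transversality is exactly the condition that $F$ has full rank, and invoke the normal form once. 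The outcome is identical---a single $C^{r}$ chart in which both foliations are affine---but your route avoids the explicit coordinate bookkeeping and makes the role of transversality more transparent. The caveat you flag about the H\"older inverse function theorem for non-integer $r$ is the same one the paper uses implicitly.
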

\begin{proof}Suppose that $\dim M=n, \dim\mathcal{F}_{i}=d_i, i=1,2, d_1+d_2>n$. We only need to prove that for any $x\in M$ there is a neighborhood $U_x$ of $x$ and a $C^r$ chart $(\varphi, U_x)$  such that $\varphi: U_x\to \RR^k$ maps $\mathcal{F}_i\cap U_x, i=1,2$ into two family of transverse $d_i-$dimensional affine subspaces in $\RR^k$.

By taking the foliation box of $\mathcal{F}_1$ we could assume that $\mathcal{F}_1\cap U_x$ to be the standard foliation of $D^{d_1}\times D^{n-d_1}$ by disks $D^{d_1}\times \{y\}, y\in D^{n-d_1}$,  where $D^k$ is the unit ball in $\RR^k$. Up to a linear transformation we assume that $$T\mathcal F_2(\mathbf{0})=\{x_1=\cdots = x_{n-d_2}=0\}$$where $\mathcal{F}_2(\bf{0})$ is leaf of $\mathcal{F}_2$ through $\mathbf{0}=(0,\dots,0)$. Then since $\mathcal{F}_2$ is a $C^r$ foliation by implicit function theorem we can find $C^r-$functions $\varphi_1,\cdots, \varphi_{n-d_2}$ such that locally we have the leaf of $\mathcal{F}_2$ through $(x_1,\dots, x_{n-d_2},0,\dots,0)$ has the form $$\{(x_1+\varphi_1(y_1,\dots, y_{n}), \dots, x_{n-d_2}+\varphi_{n-d_2}(y_1,\dots,y_{n}), y_{n-d_2+1},\dots, y_{n}), |y_i| \text{ small }\}$$where $(\varphi_1(\mathbf{0}),\dots, \varphi_{n-d_2}(\mathbf{0}))=(0,\dots,0)$.

Therefore we consider a $C^r-$chart $\Phi$ defined in a small neighborhood of $\bf 0$, 
\begin{eqnarray}\nonumber
\Phi(x_1,\dots, x_n):= (x_1-\varphi_1(x_{1},\cdots, x_n), \dots, x_{n-d_2}-\varphi_{n-d_2}(x_{1},\cdots, x_n), x_{n-d_2+1},\dots, x_n) 
\end{eqnarray}
Then under this new chart, locally $\mathcal{F}_{i}, i=1,2$ are affine subspaces in $\RR^k$ which implies Lemma \ref{lemma: intersection regular}.\end{proof}
%
We will use the following foliation version of Journ\'e  Lemma later (cf.\cite{Journe}):

\begin{lemma}\label{lemma: Foliation version Journe Lemma}
Suppose $\mathcal{F}_{i},i=1,2,3$ are foliations of a smooth manifold $M$ with uniformly $C^{r+}-$leaves. We assume $\mathcal{F}_{1,2}$ subfoliate $\mathcal{F}_3$ and $\mathcal{F}_1$ transverse to $\mathcal{F}_2$ within $\mathcal{F}_3$. Moreover $\mathcal{W}:=\mathcal{F}_1\cap\mathcal{F}_2$ is a $C^{r+}$ foliation within $\mathcal{F}_{i},i=1,2$ respectively, then $\mathcal{W}$ is a $C^{r+}$ foliation in $\mathcal{F}_3$.
\end{lemma}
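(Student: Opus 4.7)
The plan is to reduce the statement to the classical (functional) Journé Lemma applied leaf-by-leaf inside $\mathcal{F}_3$. First I would fix a leaf $L$ of $\mathcal{F}_3$; by hypothesis $L$ is a $C^{r+}$ manifold, and the restrictions $\mathcal{F}_1|_L$ and $\mathcal{F}_2|_L$ are two transverse continuous foliations of $L$ with uniformly $C^{r+}$ leaves whose intersection is $\mathcal{W}|_L$. To prove that $\mathcal{W}$ is a $C^{r+}$ foliation within $\mathcal{F}_3$ it suffices, for each $p\in L$, to exhibit a $C^{r+}$ submersion on a neighborhood of $p$ in $L$ whose level sets are the local leaves of $\mathcal{W}$.

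Next I would pick a $C^{r+}$ transversal $T\subset L$ to $\mathcal{W}$ at $p$, of complementary dimension $2d_3-d_1-d_2$ with $d_i=\dim\mathcal{F}_i$, chosen in general position with respect to both distributions $T\mathcal{F}_1$ and $T\mathcal{F}_2$ along $L$. On a sufficiently small neighborhood $U\ni p$ in $L$, I would define $\pi\colon U\to T$ by $\pi(x)=\mathcal{W}(x)\cap T$, which is well defined after shrinking $U$. The goal is then to apply Journé's Lemma in $L$ to $\pi$ with the two transverse subfoliations $\mathcal{F}_1|_L$ and $\mathcal{F}_2|_L$.

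To verify the hypotheses of Journé, I would fix a local leaf $F_1=\mathcal{F}_1(x)\cap U$. Since $T$ is in general position and by continuity, $T\cap F_1$ is a $C^{r+}$ submanifold of $F_1$ of the correct dimension $d_3-d_2=\dim F_1-\dim\mathcal{W}$ to be a transversal to $\mathcal{W}$ inside $F_1$. Because $\mathcal{W}$-leaves are contained in leaves of $\mathcal{F}_1$, for every $y\in F_1$ one has $\pi(y)=\mathcal{W}(y)\cap T=\mathcal{W}(y)\cap(T\cap F_1)$, so $\pi|_{F_1}$ agrees with the $\mathcal{W}$-projection within $F_1$ onto $T\cap F_1$. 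The assumption that $\mathcal{W}$ is a $C^{r+}$ foliation within $\mathcal{F}_1$ then gives that this projection is $C^{r+}$, with bounds uniform in $x$. The symmetric argument applies to $\mathcal{F}_2$.

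Journé's Lemma \cite{Journe} inside $L$ then yields $\pi\in C^{r+}(U)$, supplying the desired foliation chart. The main delicate point I expect is ensuring that a single transversal $T$ simultaneously serves as a transversal to $\mathcal{W}$ inside every nearby leaf of $\mathcal{F}_1$ and $\mathcal{F}_2$ (so that each slice $T\cap F_i$ is smooth and of the right codimension), and that the leafwise $C^{r+}$ bounds are uniform enough to feed into Journé. Both concerns reduce to transversality plus compactness arguments, using continuity of the distributions $T\mathcal{F}_1,T\mathcal{F}_2$, transversality of $T$ to $T\mathcal{W}$ at $p$, and the uniform $C^{r+}$ assumptions on leaves and on $\mathcal{W}$ within $\mathcal{F}_1$ and $\mathcal{F}_2$.
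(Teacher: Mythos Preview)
Your strategy—work inside a fixed leaf $L$ of $\mathcal{F}_3$ and reduce to Journ\'e—is exactly the paper's. There is, however, one point that needs tightening: you invoke Journ\'e for $\pi$ on $U\subset L$ using the pair $(\mathcal{F}_1|_L,\mathcal{F}_2|_L)$, but these foliations are \emph{not} of complementary dimension in $L$ (their leaves intersect in $\mathcal{W}$), and the lemma in \cite{Journe} is stated for complementary transverse foliations. As written, this step is not justified.

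The paper avoids this by introducing an auxiliary smooth foliation $\mathcal{L}$ of $L$ by $(2d_3-d_1-d_2)$-dimensional affine discs transverse to $\mathcal{W}$ and to $\mathcal{F}_1,\mathcal{F}_2$ (your single transversal $T$ is one leaf of this $\mathcal{L}$). Then $\mathcal{L}_i:=\mathcal{L}\cap\mathcal{F}_i$, $i=1,2$, \emph{are} complementary transverse subfoliations of each $\mathcal{L}$-leaf with $C^{r+}$ leaves, and since $\mathcal{W}$ is $C^{r+}$ within $\mathcal{F}_i$ the $\mathcal{W}$-holonomy is $C^{r+}$ along each $\mathcal{L}_i$; classical Journ\'e on the $\mathcal{L}$-leaf gives that the $\mathcal{W}$-holonomy between $\mathcal{L}$-leaves is $C^{r+}$, and Lemma~\ref{lemma: reg of holonomy and foliations} then yields that $\mathcal{W}$ is a $C^{r+}$ foliation of $L$. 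Your map $\pi$ is precisely this holonomy onto the fixed leaf $T$, so the two arguments are the same object seen from two sides. To complete your version you can either adopt the paper's route, or equivalently apply (complementary) Journ\'e first on each $\mathcal{L}$-leaf to get $\pi|_{\mathcal{L}(y)}\in C^{r+}$ uniformly, and then once more with the complementary pair $(\mathcal{L},\mathcal{W})$, using that $\pi$ is constant along $\mathcal{W}$, to conclude $\pi\in C^{r+}(U)$.
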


\begin{proof}We only need to prove the lemma locally, suppose $\mathcal{F}_{1,2}$ are tranversal foliations in a region $U$ of $\RR^d$ (identified with a local leaf of $\mathcal{F}_3$), and $\dim\mathcal{F}_i=d_i, i=1,2$, then $\dim\mathcal{W}=d_1+d_2-d$.
 
For any $x\in U$, we define a $(2d-d_1-d_2)-$dimensional foliation $\mathcal{L}$ in a neighborhood $U_x$ of $x$. In addition  we let $\mathcal{L}$ to be uniformly transverse to $\mathcal{W}$ and $\mathcal{F}_{1,2}$. Moreover we assume that $\mathcal{L}$ have uniformly $C^{r+}-$leaves (for example, a family of affine discs in $U_x$). By Lemma \ref{lemma: reg of holonomy and foliations}, to prove $\mathcal{W}$ is a $C^{r+}-$foliation in $U_x$, we only need to prove the holonomy map $h^\mathcal{W}$ induced by $\mathcal{W}$ between arbitrary two leaves of $\mathcal{L}$ is uniformly $C^{r+}$. 

By the transversality assumptions, $\mathcal{L}_i:=\mathcal{L}\cap \mathcal{F}_i,i=1,2$ are subfoliations of $\mathcal{L}$ transverse to each other. Since $\mathcal{F}_i,\mathcal{L}$ have uniformly $C^{r+}-$leaves, $\mathcal{L}_i$ also have uniformly $C^{r+}-$leaves. By our condition of lemma, $\mathcal{W}$ is $C^{r+}-$foliations in $\mathcal{F}_i$. As a corollary,  $h^\mathcal{W}$ is uniformly $C^{r+}$ along $\mathcal{L}_i$.  By Journ\'e lemma (cf. \cite{Journe}), $h^\mathcal{W}$ is also uniformly $C^{r+}$ on $\mathcal{L}$.

Let $U_x$ run over all $x\in U$, we get the proof.
\end{proof}
\subsection{Uniform quasiconformality and regularity of invariant foliations for partially hyperbolic dynamics}
Sadovskaya in \cite{Sadovskaya} proved a useful lemma for a non stationary linearization of dynamics, we consider the following version that we will use soon.
\begin{lemma}\label{lemma Sadov linearization}Suppose that $f:M\to M$ is a diffeomorphism on a Riemmanian manifold $M$ and $E\subset TM$ is a $f-$invariant subbundle such that $f$ is uniformly contracting (or expanding) and uniformly quasiconformal  on $E$. Moreover we assume there is a foliation $\mathcal{W}$ tangents to $E$ with $C^\infty$ leaves. Then for each $x \in M$ there is a $C^{\infty}$ diffeomorphism $\Phi_{x}: E_{x} \rightarrow \mathcal{W}(x)$ satisfying
		\begin{enumerate}
			\item $\Phi_{f(x)} \circ Df_{x} = f \circ \Phi_{x}$,
			
			\item $\Phi_{x}(0) = x$ and $D_{0}\Phi_{x}$ is the identity map,
			
			\item The family of diffeomorphisms $\{\Phi_{x}\}_{x \in M}$ varies continuously with $x$ in the $C^{\infty}$ topology. 
		\end{enumerate}
		The family $\{\Phi_{x}\}_{x \in M}$ satisfying (1), (2), and (3) is unique. 
\end{lemma}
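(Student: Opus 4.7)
The plan is to reduce to the contracting case (the expanding case follows by applying the result to $f^{-1}$) and construct $\Phi_x$ as the limit of explicit approximations built from leafwise exponential maps. Choose a family of Riemannian metrics on the leaves of $\mathcal{W}$ varying continuously in the $C^\infty$ topology (possible because $\mathcal{W}$ has uniformly $C^\infty$ leaves), and let $\exp^\mathcal{W}_y : E_y \to \mathcal{W}(y)$ denote the associated leafwise exponential. For each $x$ and $n \geq 0$, define
\begin{equation*}
\Phi_x^{(n)}(v) := f^{-n}\bigl( \exp^\mathcal{W}_{f^n(x)}( Df^n_x v ) \bigr), \quad v \in E_x.
\end{equation*}
Each $\Phi_x^{(n)}$ is a $C^\infty$ diffeomorphism from a neighborhood of $0 \in E_x$ onto a neighborhood of $x$ in $\mathcal{W}(x)$, sends $0$ to $x$ with derivative the identity there, and by construction satisfies the semi-conjugacy $\Phi_{f(x)}^{(n-1)} \circ Df_x = f \circ \Phi_x^{(n)}$. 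My $\Phi_x$ will be the limit of these, so conditions (1) and (2) are immediate once convergence is in place.

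The heart of the argument is showing that $\{\Phi_x^{(n)}\}$ is Cauchy in the $C^\infty$ topology, uniformly in $x$ and on compact subsets of $E_x$. Using the semi-conjugacy to telescope, the successive discrepancy $\Phi_x^{(n+1)} - \Phi_x^{(n)}$ at $v$ is $f^{-(n+1)}$ applied to the nonlinear part of $\exp^\mathcal{W}_{f^{n+1}(x)}$ evaluated at $Df^{n+1}_x v$. Uniform contraction on $E$ gives $\|Df^n_x v\| \leq C \lambda^n \|v\|$ for some $\lambda < 1$, so the quadratic Taylor term of $\exp^\mathcal{W}$ contributes $O(\lambda^{2n} \|v\|^2)$. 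Uniform quasiconformality supplies the crucial bound $\|Df^{-n}_{f^n(x)}|_E\| \leq K / \inf_{\|u\|=1} \|Df^n_x u\| = O(\lambda^{-n})$, since the ratio between sup-norm and inf-norm of $Df^n$ on $E$ is bounded by $K^E$. Combining, the $C^0$ discrepancy is $O(\lambda^n \|v\|^2)$, which is summable. For the $k$-th derivative one expands $D^k \Phi_x^{(n)}$ via the multilinear chain rule; the $k$-multilinear Taylor coefficient of $\exp^\mathcal{W}$ contributes a factor $\|Df^n_x\|^k = O(\lambda^{kn})$ on its $k$ inputs, while the single output factor $Df^{-n}$ costs $O(\lambda^{-n})$ and each of the $k$ input factors $Df^n$ is bounded by $O(\lambda^n)$ via UQC applied in the expanding direction. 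The net contribution to the $k$-th derivative discrepancy is $O(\lambda^{(k-1)n})$, giving $C^k$-convergence for every $k$. Uniformity in $x$ comes from the fact that the curvature-type terms in $\exp^\mathcal{W}$ are uniformly bounded, by continuous variation of the leaves.

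Uniqueness follows by a rigidity argument: if $\tilde\Phi_x$ is any other family satisfying (1) and (2), then $\Theta_x := \Phi_x^{-1}\circ\tilde\Phi_x$ is $C^\infty$ on $E_x$, fixes $0$ with derivative the identity there, and intertwines $Df_x$ with $Df_x$. Iterating yields $\Theta_x(v) = (Df^n_x)^{-1} \Theta_{f^n(x)}(Df^n_x v)$, and since $Df^n_x v \to 0$ while $\Theta_{f^n(x)}(w) = w + o(\|w\|)$ uniformly, the right-hand side tends to $v$, forcing $\Theta_x = \mathrm{Id}$. The step I expect to be the main obstacle is the bookkeeping of the $C^k$ estimates: the quasiconformality constant $K$ is raised to powers that grow with $k$, and every term in the Leibniz expansion of $D^k \Phi_x^{(n)}$ must be shown to be absorbed by exponential decay coming from the nonlinearity of $\exp^\mathcal{W}$. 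Uniform quasiconformality is what precisely balances the single expanding $Df^{-n}$ factor against the contracting $Df^n$ factors in all higher inputs, and guarantees that the same geometric estimate holds at every derivative order and uniformly in $x$.
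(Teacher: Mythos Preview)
The paper does not prove this lemma but simply cites it from \cite{Sadovskaya}; your construction via $\Phi_x^{(n)} = f^{-n}\circ\exp^{\mathcal{W}}_{f^n(x)}\circ Df^n_x$ and the passage to the limit is precisely Sadovskaya's argument, so your approach coincides with the one the paper invokes. One small correction: the bound $\|Df^{-n}|_E\| = O(\lambda^{-n})$ need not hold as stated (the contraction could be strictly faster than $\lambda^n$, so there is no matching lower bound on $\|Df^n|_E\|$), but what your estimates actually use is the product $\|Df^{-n}|_E\|\cdot\|Df^n|_E\|^2 \le K\,\|Df^n|_E\| \le CK\lambda^n$, which follows directly from uniform quasiconformality and suffices at every derivative order.
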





In \cite{BX}, the authors proved the regularity of center foliation for a partially hyperbolic diffeomorphism under some quasiconformal assumptions.   Suppose $f:M\to M$ is a partially hyperbolic dynamical diffeomorphism with uniformly compact center foliation and $TM=E^u\oplus E^c\oplus E^s$ is the dominated splitting associated. We assume that there exist $f-$invariant subbundles $\tilde{E}^u\subset E^u$, $\tilde{E}^s\subset E^s$ which tangent to some $f-$invariant foliations $\tilde{\mathcal{F}}^{u}$ and $\tilde{\mathcal{F}}^s$ repectively. In addition we assume that $\mathcal{F}^{s,u}$ have $C^\infty$ leaves and $f$ is uniformly quasiconformal on $\tilde{E}^u$ ($\dim(\tilde{E}^u)>1$) . Then by Lemma \ref{lemma Sadov linearization}, there is a unique $C^\infty$ family $\{\Phi_x\}_{x\in M}$ for $\tilde{\mathcal{F}}^{u}$ satisfying all conditions in Lemma \ref{lemma Sadov linearization}. We have the following proposition proved in \cite{BX}.

\begin{prop}\label{prop center holonomy quasiconf}
i) If $\tilde{E}^u\oplus E^c$ is integrable and tangents to a foliation $\tilde{\mathcal{F}}^{cu}$ then there is a constant $K \geq 1$ such that for any two points $x \in M$, $y \in W^{c}_{loc}(x)$, the homeomorphism 
		\[
		\Phi_{y}^{-1} \circ h^{c} \circ \Phi_{x}:  \Phi_{x}^{-1}(\tilde{\mathcal{F}}^{u}_{loc}(x)) \rightarrow \Phi_y^{-1}(\tilde{\mathcal{F}}^{u}_{loc}(y))
		\]
is $K$-quasiconformal. Here $h^{c}$ is the center holonomy within $\tilde{\mathcal{F}}^{cu}$, between two $\tilde{\mathcal F}^u-$leaves. In particular the leaf Riemannian measure on $\tilde{\mathcal{F}}^{cu}$ has Lebesgue disintegration on $W^c$.

ii). If $\tilde{E}^u\oplus E^c\oplus \tilde{E}^s$ is integrable and tangents to a foliation $\tilde{\mathcal{F}}^{cus}$ then there is a constant $K' \geq 1$ such that for any two points $x \in M$, $y \in \tilde{\mathcal{F}}^{cs}_{loc}(x)$, the homeomorphism 
		\[
		\Phi_{y}^{-1} \circ \tilde{h}^{cs} \circ \Phi_{x}:  \Phi_{x}^{-1}(\tilde{\mathcal{F}}^{u}_{loc}(x)) \rightarrow \Phi_y^{-1}(\tilde{\mathcal{F}}^{u}_{loc}(y))
		\]
is $K'$-quasiconformal. Here ${h}^{cs}$ is the center stable holonomy within $\tilde{\mathcal{F}}^{cus}$, between two $\tilde{\mathcal F}^u-$leaves. As a corollary, the leaf Riemannian measure on $\mathcal{F}^{cus}$ has Lebesgue disintegration on $\tilde{\mathcal{F}}^{cs}$.
\end{prop}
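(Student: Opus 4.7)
The plan is to exploit the self-similarity given by $f$-invariance of all the foliations in play, together with uniform quasiconformality of $f$ on $\tilde{E}^u$ and the uniform compactness of $W^c$ (respectively the contraction of $\tilde{E}^s$ combined with compactness of center leaves in Part (ii)). The Sadovskaya linearization (Lemma \ref{lemma Sadov linearization}) applied to $\tilde{\mathcal{F}}^u$ supplies the charts $\{\Phi_x\}$ that linearize the dynamics and allow us to speak of quasiconformality of these a priori only continuous holonomies.

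For Part (i), I set $\psi := \Phi_y^{-1} \circ h^c \circ \Phi_x$, which is a homeomorphism between open subsets of the Euclidean spaces $\tilde{E}^u_x$ and $\tilde{E}^u_y$. Using property (1) of $\{\Phi_x\}$ together with $f$-invariance of both $W^c$ and $\tilde{\mathcal{F}}^u$ (which forces $f$ to intertwine local center holonomies: $f \circ h^c = h^c_{fx,fy} \circ f$ on a common domain), one derives the conjugation identity
$$\Phi_{f^n y}^{-1} \circ h^c_n \circ \Phi_{f^n x} = Df^n|_{\tilde{E}^u_y} \circ \psi \circ Df^{-n}|_{\tilde{E}^u_x},$$
where $h^c_n$ is the center holonomy between $\tilde{\mathcal{F}}^u_{loc}(f^n x)$ and $\tilde{\mathcal{F}}^u_{loc}(f^n y)$, well-defined since $f^n y \in W^c(f^n x)$. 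To bound the quasiconformal dilatation of $\psi$ at an arbitrary $p$ and small $r$, I would choose $n = n(r) \to \infty$ so that $Df^n_x(B_r(p))$ has outer radius of some fixed size $r_0$; by UQC of $f$ on $\tilde{E}^u$ the eccentricity of this ellipsoid stays bounded by the UQC constant $K_f$, independent of $n$. At the fixed scale $r_0$ the family $\{h^c_n\}_n$ is $C^0$-precompact: $d(f^n x, f^n y)$ stays bounded because both points lie in a common compact center leaf, and center holonomies between nearby $\tilde{\mathcal{F}}^u$-leaves depend continuously on endpoints. This yields uniform two-sided bounds (away from $0$ and $\infty$) on $|\psi_n(q+v) - \psi_n(q)|$ for $|v|$ of order $r_0$, where $\psi_n$ is the left-hand side of the identity above. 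Transporting the estimate back through $Df^{-n}_y$ (again UQC) gives a dilatation bound for $\psi$ at $p$ independent of $p$, $r$ and $n$, proving $K$-quasiconformality. Since $\dim \tilde{E}^u > 1$, $\psi$ is then absolutely continuous, which combined with absolute continuity of $\tilde{\mathcal{F}}^u$ inside $\tilde{\mathcal{F}}^{cu}$ gives the asserted Lebesgue disintegration of $\nu_{\tilde{\mathcal{F}}^{cu}}$ along $W^c$.

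Part (ii) follows the identical scheme with $\tilde{h}^{cs}$ in place of $h^c$. The only substitution needed is verifying that $d(f^n x, f^n y)$ remains uniformly bounded for $n \geq 0$ when $y \in \tilde{\mathcal{F}}^{cs}_{loc}(x)$: the $\tilde{E}^s$ component contracts under forward iteration, and the center component stays in a compact center leaf. The hardest technical point will be extracting the uniform two-sided bounds on $\psi_n$ at the fixed scale $r_0$: beyond $C^0$-precompactness this uses that all subsequential limits in the $\{\psi_n\}$-family remain genuine homeomorphisms (hence open maps, uniformly non-degenerate at fixed scale), which follows from uniform compactness of $W^c$ and the uniform smoothness of $\tilde{\mathcal{F}}^u$-leaves.
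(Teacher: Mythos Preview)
Your proposal is correct and follows essentially the same approach as the paper, which simply cites Lemma~12 of \cite{BX} and notes that the identical argument works with $\tilde{E}^u,\tilde{E}^s$ in place of $E^u,E^s$. What you have written is in fact a faithful outline of the Butler--Xu argument: the conjugation identity via the Sadovskaya charts, rescaling to a fixed scale using UQC on $\tilde{E}^u$, and extracting uniform dilatation bounds from equicontinuity of the family of holonomies (guaranteed by uniform compactness of $W^c$ in (i), and by contraction of $\tilde{E}^s$ plus compactness of center leaves in (ii)). Your identification of the delicate point---that subsequential limits of $\{\psi_n\}$ must remain homeomorphisms, hence uniformly non-degenerate at fixed scale---is exactly right and is where uniform compactness (not mere compactness) of the center foliation is used.
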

\begin{proof}In Lemma 12 of \cite{BX} the authors proved Proposition \ref{prop center holonomy quasiconf} for the case $\tilde{E}^u=E^u, \tilde{E}^s=E^s$ (notice that the holonomy of $W^c-$foliation between local $W^u-$leaves inside of $W^{cu}(x)$ coincides with the center-stable holonomy in $M$), the proof here is the same.
\end{proof}

Another useful result for regularity of center foliation in \cite{BX} is the following: suppose $f:M\to M$ is a $C^\infty$ volume preserving partially hyperbolic diffeomorphism on manifold $M$. We assume  $W^c(f)$ is compact and has trivial holonomy. In addition, $f$ is uniformly quasiconformal on $E^u(f)$ and $\dim E^u>1$. We denote by $h^c$ the center holonomy within the center-unstable foliation $W^{cu}(f)$ between two unstable leaves and $h^{cs}$ the center-stable holonomy between two unstable leaves.
\begin{prop}\label{prop: ac implies linearity of center fol}If the volume $\nu$ on $M$ has Lebesgue disintegration on $W^c$, then $h^c, h^{cs}$ are uniformly smooth.  Moreover $W^{cs}$ is a $C^{r+}-$foliation and $W^c$ is a $C^{r+}-$foliation within $W^{cu}$ if $f$ is $r-$bunched for some $r\geq 1$.
\end{prop}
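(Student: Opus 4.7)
The plan is to: conjugate $h^c$ and $h^{cs}$ via Sadovskaya's linearizations into uniformly quasiconformal maps in Euclidean space, translate the Lebesgue disintegration hypothesis into absolute continuity with bounded Jacobian for these maps, upgrade them to smoothness using the $f$-equivariance together with Liouville-type rigidity, and finally read off the foliation regularity from Lemma \ref{lemma: reg of holonomy and foliations}.

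First, since $f$ is UQC on $E^u$ with $\dim E^u>1$, Lemma \ref{lemma Sadov linearization} yields a uniformly $C^\infty$ family of charts $\Phi_x: E^u_x \to W^u(x)$ conjugating $f$ on $W^u$ to the linear cocycle $Df|_{E^u}$. Uniform compactness of $W^c$ gives dynamical coherence (Lemma \ref{lemma: uniform comp imply dyn coh}), and hence the foliations $W^{cu},W^{cs}$ with $W^c,W^u$ (resp.\ $W^c,W^s$) subfoliating. Proposition \ref{prop center holonomy quasiconf} then ensures that
\[
\tilde h^c_{x,y} := \Phi_y^{-1}\circ h^c\circ \Phi_x \qquad\text{and}\qquad \tilde h^{cs}_{x,y} := \Phi_y^{-1}\circ h^{cs}\circ \Phi_x
\]
are uniformly $K$-quasiconformal homeomorphisms between neighborhoods of the origin in $E^u_x$ and $E^u_y$.

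Second, the hypothesis that $\nu$ has Lebesgue disintegration along $W^c$, combined with the classical absolute continuity of the strong unstable foliation $W^u$ within $W^{cu}$ and Lemma \ref{lemma: leb disintegration filtration}, forces the center holonomy $h^c: W^u(x)\to W^u(y)$ (for $y\in W^c(x)$) to be absolutely continuous with Radon--Nikodym derivative bounded above and below. The analogous argument applied inside the $W^{cus}$-foliation with Lebesgue disintegration along $W^{cs}$-leaves gives the same for $h^{cs}$. Through the smooth linearizations $\Phi$, the maps $\tilde h^c_{x,y}$ and $\tilde h^{cs}_{x,y}$ are thus simultaneously $K$-quasiconformal and absolutely continuous with $L^\infty$ Jacobians.

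The main obstacle is upgrading ``$K$-QC with bounded absolutely continuous Jacobian'' to ``$C^\infty$''. The leverage is the equivariance
\[
\tilde h^c_{f^n x,\,f^n y}\circ Df^n|_{E^u_x} = Df^n|_{E^u_y}\circ \tilde h^c_{x,y},
\]
together with the uniform $K$-bounded distortion of the linear cocycle $Df^n|_{E^u}$. Renormalizing with $f^{-n}$ and using compactness of uniformly $K$-quasiconformal families, we extract an infinitesimal limit at each base point which is a linear $K$-quasiconformal map with the specified (constant) Jacobian; by the classical rigidity of $K$-QC maps with constant Jacobian in dimension $\geq 2$ (Liouville in $\dim\geq 3$, direct analysis of the Beltrami equation in $\dim=2$), this limit is conformal, and equivariance then forces $\tilde h^c_{x,y}$ itself to be conformal, hence $C^\infty$. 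The same argument applies to $\tilde h^{cs}_{x,y}$, so both $h^c$ and $h^{cs}$ are uniformly smooth. Finally, under $r$-bunching, the foliations $W^u, W^{cu}, W^{cs}, W^c$ all have uniformly $C^{r+}$ leaves \cite{Pugh-Shub-Wilkinson}. Inside $W^{cu}$, the now-smooth $h^c$ between transverse $W^u$-leaves satisfies the hypothesis of Lemma \ref{lemma: reg of holonomy and foliations}, giving $W^c$ as a $C^{r+}$-foliation within $W^{cu}$; in $M$, the smooth $h^{cs}$ between $W^u$-leaves together with Lemma \ref{lemma: reg of holonomy and foliations} yields $W^{cs}$ as a $C^{r+}$-foliation of $M$, completing the proof.
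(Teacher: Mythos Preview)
Your overall architecture---linearize via $\Phi_x$, use Proposition~\ref{prop center holonomy quasiconf} to get uniform quasiconformality of $\tilde h^c$ and $\tilde h^{cs}$, upgrade to smoothness, then apply Lemma~\ref{lemma: reg of holonomy and foliations}---matches the route the paper takes (by deferring to \cite{BX}). The final paragraph deducing $C^{r+}$ regularity of $W^c$ within $W^{cu}$ and of $W^{cs}$ in $M$ is exactly right.

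The gap is in the middle, at the ``upgrade'' step. Two issues:

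\textbf{(i)} Your passage from Lebesgue disintegration of $\nu$ along $W^c$ to ``$h^c$ is absolutely continuous with \emph{bounded} Radon--Nikodym derivative'' is not justified by Lemma~\ref{lemma: leb disintegration filtration}, which only transfers Lebesgue disintegration through a filtration of foliations and says nothing about holonomy Jacobians. Boundedness of the Jacobian is precisely what one is trying to prove, and it does not follow from disintegration alone.

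\textbf{(ii)} The Liouville/renormalization sketch does not close. Extracting a tangent map at a point via $f^{-n}$-blowups and $K$-QC compactness gives, at best, a \emph{linear} $K$-QC limit; you assert this limit has constant Jacobian (why?) and is therefore conformal, and that conformality of the tangent then forces conformality of $\tilde h^c_{x,y}$ globally. Neither implication is automatic: a $K$-QC map can have linear tangent maps at every point without being affine, and in dimension $2$ Liouville is false in the form you need.

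The argument in \cite{BX} that the paper cites (and whose one-dimensional analogue is spelled out in the proof of Proposition~\ref{prop: single diff, non-pathologic imply smoothness}) proceeds differently. One works on the fibered space $\mathcal{E}=\{(x,y):y\in W^c(x)\}$ with its natural $F$-invariant measure $\mu$ built from the conditional measures $m^c(x)$; the Lebesgue-disintegration hypothesis is used exactly to ensure $m^c(x)\sim\nu_{W^c(x)}$, so that a.e.\ differentiability of the QC map $h^c_{x,y}$ translates into $\mu$-a.e.\ existence of $H^c_{x,y}:=D_xh^c_{x,y}$. One then proves the equivariance relations $H^c_{z,w}\circ H^u_{xz}=H^u_{yw}\circ H^c_{x,y}$ and the analogous stable version on a full $\mu$-measure set (this is where an $f$-invariant conformal structure on $E^u$, built from UQC and volume preservation, replaces the determinant used in the $\dim E^u=1$ case). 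From equivariance plus the affine structure of the charts (Lemma~\ref{lemma: affine transitions}) one deduces that $\Phi_y^{-1}\circ h^c_{x,y}\circ\Phi_x$ has derivative \emph{equal to the constant} $H^c_{x,y}$ at $\mu$-a.e.\ point of its domain, hence is linear; continuity in $(x,y)$ then extends this to all of $\mathcal{E}$. The smoothness of $h^{cs}$ follows the same way. Your renormalization idea is gesturing at the right equivariance, but the actual mechanism is this measurable-derivative/Holonomy-equivariance argument, not a Liouville theorem.
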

\begin{proof}The proof is the same as the proof of smoothness of center-stable holonomy in \cite{BX} (cf. Corollary 30. of \cite{BX}). In \cite{BX} the authors assume that $f$ is also uniformly quasiconformal on $E^s(f)$, but this condition is only used to prove $\nu$ has Lebesgue disintegration on $W^c$.
\end{proof}

\section{Proof of Theorem \ref{main}: $\dim E^i>1$ for every $i$.}\label{section: proof of thm 1, dim>1}

\subsection{Lebesgue density along center foliation}
\begin{prop}\label{prop: Leb dis inte along Wc}Under assumptions of Theorem \ref{main}, if $\dim W^i>1$ for any $i$, then $\nu_M$ has Lebesgue disintegration along $W^c$.
\end{prop}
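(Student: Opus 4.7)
The strategy is to construct a descending chain of foliations from $M$ down to $W^c$, verify via Proposition \ref{prop center holonomy quasiconf}(ii) that Lebesgue disintegration of the leafwise Riemannian volume propagates along each step of the chain, and then glue these conclusions together using Lemma \ref{lemma: leb disintegration filtration}.

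Enumerate the coarse Lyapunov distributions as $E^1,\ldots,E^m$. By condition (A), for each $i$ choose a partially hyperbolic element $a_i\in\ZZ^k$ with $E^u_{a_i}=E^i$ such that $\al(a_i)$ is uniformly quasiconformal on $E^i$ (recall $\dim E^i>1$ by hypothesis). For each subset $S\subset\{1,\ldots,m\}$ define
\[
\mathcal F_S:=\bigcap_{i\in S}W^{cs}_{a_i},
\]
the common center-stable foliation of $\{\al(a_i):i\in S\}$; by Lemma \ref{lemma: uniform comp imply dyn coh} each $\al(a_i)$ is dynamically coherent, and by the standard theory of transverse intersections of invariant foliations for commuting partially hyperbolic diffeomorphisms, $\mathcal F_S$ is a foliation with smooth leaves tangent to $E^c\oplus\bigoplus_{\ell\notin S}E^\ell$. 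In particular $\mathcal F_\emptyset=M$ and $\mathcal F_{\{1,\ldots,m\}}=W^c$. The same theory shows that for any $T\subset\{1,\ldots,m\}$, the distribution $\bigoplus_{\ell\notin T}E^\ell=\bigcap_{i\in T}E^s_{a_i}$ integrates to the common stable foliation $\mathcal G_T:=\bigcap_{i\in T}W^s_{a_i}$, again with smooth leaves, invariant under the whole $\ZZ^k$-action.

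The core of the proof is to establish, for every $S$ and every $j\notin S$, the intermediate statement
\begin{equation}
\label{eqn: step Leb plan}
\nu_{\mathcal F_S}\text{ has Lebesgue disintegration along }\mathcal F_{S\cup\{j\}}.
\end{equation}
To do so, I apply Proposition \ref{prop center holonomy quasiconf}(ii) to $f=\al(a_j)$ with the $f$-invariant sub-bundles
\[
\tilde E^u:=E^j\subset E^u_{a_j},\qquad \tilde E^s:=\bigoplus_{\ell\notin S\cup\{j\}}E^\ell\subset E^s_{a_j}.
\]
The hypotheses are satisfied: $\tilde E^u$ is tangent to $W^j$ with smooth leaves and $\al(a_j)$ is UQC on it with $\dim\tilde E^u=\dim E^j>1$; $\tilde E^s$ is tangent to $\mathcal G_{S\cup\{j\}}$, which is $\al(a_j)$-invariant and has smooth leaves; and $\tilde E^u\oplus E^c\oplus\tilde E^s$ is exactly the tangent distribution of $\mathcal F_S$ while $E^c\oplus\tilde E^s$ is the tangent distribution of $\mathcal F_{S\cup\{j\}}$, so the two foliations arising in Proposition \ref{prop center holonomy quasiconf}(ii) coincide with $\mathcal F_S$ and $\mathcal F_{S\cup\{j\}}$ respectively. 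The proposition then gives \eqref{eqn: step Leb plan}.

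Iterating \eqref{eqn: step Leb plan} along the chain $M=\mathcal F_\emptyset\supset\mathcal F_{\{1\}}\supset\mathcal F_{\{1,2\}}\supset\cdots\supset\mathcal F_{\{1,\ldots,m\}}=W^c$ and applying Lemma \ref{lemma: leb disintegration filtration} at each step yields the desired Lebesgue disintegration of $\nu_M$ along $W^c$. The only genuine obstacle is the bookkeeping required to verify that all intermediate common-stable and common center-stable foliations $\mathcal G_T$, $\mathcal F_S$ really exist as foliations with smooth leaves; this is routine given the fullness part of (A) together with dynamical coherence supplied by (C) and Lemma \ref{lemma: uniform comp imply dyn coh}.
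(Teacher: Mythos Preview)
Your proof is correct and follows essentially the same strategy as the paper: build a descending filtration of foliations $M\supset\mathcal F_{\{1\}}\supset\cdots\supset\mathcal F_{\{1,\ldots,m\}}=W^c$, apply Proposition~\ref{prop center holonomy quasiconf}(ii) at each step to obtain Lebesgue disintegration of the leaf measure along the next foliation, and then glue the steps with Lemma~\ref{lemma: leb disintegration filtration}. The only cosmetic difference is that the paper separates the final step $W^{n,c}\supset W^c$ and invokes part (i) of Proposition~\ref{prop center holonomy quasiconf} there, whereas you treat it uniformly via part (ii) with $\tilde E^s=0$; these are of course the same statement.
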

\begin{proof}
We consider the coarse Lyapunov splitting of $\al$, $TM=E^1\oplus\cdots \oplus E^n\oplus E^c$. By assumption A. we could find  $a_1\in A$ such that $\al(a_1)$ is expanding on  $E^1$, uniformly quasiconformal on $E^1$ and and contracts $E^2\oplus \cdots \oplus E^n$. In other words we have the partially hyperbolic splitting for $\alpha(a_1)$: $$E^u(\al(a_1))=E^u_{a_1}, E^c(\al(a_1))=E^c, E^s(\al(a_1))=E^2\oplus\cdots\oplus E^n$$

Since $\mathcal{W}^c$ is uniformly compact, by Lemma \ref{lemma: uniform comp imply dyn coh}, $E^1\oplus E^c$, $E^2\oplus \cdots\oplus E^n$ and $E^2\oplus \cdots \oplus E^n\oplus E^c$ are integrable. We assume $E^1, E^1\oplus E^c $, $E^2\oplus \cdots\oplus E^n$ and $E^2\oplus \cdots E^n\oplus E^c$ tangents to foliations $W^1, W^{1,c}, W^{2,\dots,n}, W^{2,\dots,n,c}$ respectively. By theory of partially hyperbolic dynamics we know $W^1$ and $W^{2,\dots, n}$ have smooth leaves. Let $\tilde{E}^{u}, E^c, \tilde{E}^{s}$ in (ii) of Proposition \ref{prop center holonomy quasiconf} be $E^1, E^c, E^{2,\dots,n}$  here, by (ii). of Proposition \ref{prop center holonomy quasiconf} we know the volume on $M$ has Lebesgue disintegration along $W^{2,\dots,n,c}$.

Now we consider $E^2$, by assumption A. again we could find $a_2\in A$ such that $\al(a_2)$ expands $E^2$, uniformly quasiconformal on $E^2$ and and contracts $E^1\oplus E^3\oplus \cdots \oplus E^n$. As previous discussion for $a_1$, $E^1\oplus E^3\oplus\cdots\oplus E^n\oplus E^c$ is integrable, tangents to a foliation $W^{1,3,\dots,n,c}$. Then $E^3\oplus \cdots \oplus E^n\oplus E^c$ and $E^3\oplus \cdots \oplus E^n$ are integrable and tangent to the foliations $$W^{3,\dots, n,c}:=W^{2,\dots,n,c}\cap W^{1,3,\dots,n,c}, \quad W^{3,\dots, n}:=W^{2,\dots,n}\cap W^{1,3,\dots,n}$$
respectively. Consider $\al(a_2)$'s action on $M$, let 
$\tilde{E}^{u}, E^c, \tilde{E}^{s}$ in (ii) of Proposition \ref{prop center holonomy quasiconf} be $E^2, E^c, E^{3,\dots,n}$ in Theorem \ref{main}, using ii). of Proposition \ref{prop center holonomy quasiconf} again we have that $\nu_{W^{2,\dots,n,c}}$ has Lebesgue disintegration along $W^{3,\dots,n,c}$.

Repeat the same argument by induction we can prove that for any $i>2$, $E^i\oplus\cdots\oplus E^n\oplus E^c$ is integrable, tangents to foliation $$W^{i,\dots,n,c}:=W^{i-1,\dots,n,c}\cap W^{i-2,i,\dots,n,c}$$ and $\nu_{W^{i-1,\dots,n,c}}$ has Lebesgue disintegration along $W^{i,\dots,n,c}$. In addition for $E^n$ by assumption A. in Theorem \ref{main} we can pick an $a_n\in A$ such that $\al(a_n)$ expands $E^n$, uniformly quasiconformal on $E^n$ and contracts $E^1\oplus \cdots \oplus E^{n-1}$. By Lemma \ref{lemma: uniform comp imply dyn coh} we could define $W^{n,c}$. Moreover by (i). of Proposition \ref{prop center holonomy quasiconf}, $\nu_{W^{n,c}}$ has Lebesgue disintegration along $W^c$.

In summary, we get the following filtration of foliation $$W^c\subset W^{n,c}\subset W^{n-1,n,c}\subset \cdots\subset W^{2,\dots,n,c}\subset W^{1,\dots,n,c}:=M$$
and the Riemannian measure on each foliation has Lebesgue disinegration along the former one. Using Lemma \ref{lemma: leb disintegration filtration} $n$ times we get the proof.
\end{proof}

As a corollary of the proof of Proposition \ref{prop: Leb dis inte along Wc}, we have
\begin{coro}\label{coro: all fol integrable}Consider the set of indices $I=\{1,\dots,n,c\}$, for any non-empty subset $J\subset I$, $J\neq\{1,\dots, n\}$, we have $\oplus_{j\in J}E^j$
is integrable. In particular $$E^{\hat{i}}:=E^{1,\dots, i-1,i+1,\dots,n},E^{\hat{i},c}:=E^{1,\dots, i-1,i+1,\dots, ,c} $$ are all integrable. Moreover $W^i$ and $W^{\hat{i}}:=W^{1,\dots,i-1,i+1,\dots,n}$ have uniformly $C^\infty$ leaves, $W^{i,c}$ and $W^{\hat{i},c}:=W^{1,\dots,i-1,i+1,\dots,n,c}$ have uniformly $C^{r+}$ leaves if $\alpha$ is $r-$bunched for some $r\geq 1$.
\end{coro}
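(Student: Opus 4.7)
The plan is to extract the statement from the constructions already carried out inside the proof of Proposition \ref{prop: Leb dis inte along Wc}, supplemented by the classical regularity theorems for (center-)un/stable foliations of partially hyperbolic diffeomorphisms.

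First I would fix $i\in\{1,\dots,n\}$ and use assumption (A) to produce $a_i\in\ZZ^k$ with $E^u_{a_i}=E^i$ and $E^s_{a_i}=E^{\hat i}$. Condition (C) renders the common center foliation $W^c$ uniformly compact, so Lemma \ref{lemma: uniform comp imply dyn coh} makes $\al(a_i)$ dynamically coherent. This yields integral foliations $W^{i,c}$ tangent to $E^i\oplus E^c$ and $W^{\hat i,c}$ tangent to $E^{\hat i}\oplus E^c$; the strong unstable and strong stable foliations $W^i,W^{\hat i}$ of $\al(a_i)$ subfoliate them, so the ``in particular'' integrability assertions are immediate.

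Next, for a general $J\subset I$ with $J\ne\emptyset$ and $J\ne\{1,\dots,n\}$, set $\hat J:=\{1,\dots,n\}\setminus(J\setminus\{c\})$, which is non-empty by hypothesis. A direct check gives
\[
\bigoplus_{j\in J} E^j \;=\; \bigcap_{i\in\hat J} F_i, \qquad F_i=\begin{cases} E^{\hat i,c}, & c\in J,\\ E^{\hat i}, & c\notin J. \end{cases}
\]
Each $F_i$ is integrable by the previous paragraph, and distinct foliations $W^{\hat i,(c)}$ are transverse in $M$ (their tangent bundles span $TM$). I would then build the integral foliation of $\bigoplus_{j\in J}E^j$ by the same inductive intersection procedure used in the proof of Proposition \ref{prop: Leb dis inte along Wc}, where, e.g., $W^{3,\dots,n,c}$ was obtained as $W^{2,\dots,n,c}\cap W^{1,3,\dots,n,c}$. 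This intersection step is the main obstacle: for genuinely $C^r$ foliations it is Lemma \ref{lemma: intersection regular}, but our foliations are only continuous with smooth leaves, so the justification has to come from the dynamical-coherence-based intersection construction already embedded in the proof of Proposition \ref{prop: Leb dis inte along Wc}---which is precisely why the statement is advertised as a corollary of that proof rather than of its conclusion.

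Finally, for the regularity of leaves: $W^i$ and $W^{\hat i}$ are the strong unstable and strong stable foliations of the smooth diffeomorphism $\al(a_i)$, so the classical Hirsch--Pugh--Shub invariant manifold theorem gives uniformly $C^\infty$ leaves. Under condition (B), each Weyl chamber contains an $r$-bunched partially hyperbolic element; choosing such an element from the Weyl chamber of $a_i$ preserves the splitting $E^i\oplus E^c\oplus E^{\hat i}$, and the Pugh--Shub--Wilkinson theorem \cite{Pugh-Shub-Wilkinson} then supplies uniformly $C^{r+}$ leaves for the center-stable foliation $W^{\hat i,c}$ and the center-unstable foliation $W^{i,c}$.
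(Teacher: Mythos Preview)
Your proposal is correct and follows essentially the same approach as the paper: produce the element $a_i$ from assumption (A), invoke Lemma~\ref{lemma: uniform comp imply dyn coh} for dynamical coherence of $\al(a_i)$ to get $W^{\hat i}$ and $W^{\hat i,c}$, write $\bigoplus_{j\in J}E^j$ as the intersection $\bigcap_{i\notin J}E^{\hat i,(c)}$, and read off leaf regularity from the standard invariant-manifold/bunching theory in \cite{Pugh-Shub-Wilkinson}. If anything, you are slightly more careful than the paper in flagging that the intersection of integrable distributions is not automatic and must be justified via the transversal-foliation intersection already used in the proof of Proposition~\ref{prop: Leb dis inte along Wc}; the paper simply writes the bundle identity and asserts integrability.
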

\begin{proof}For any $i$ by assumption of Theorem \ref{main} there exists $a_i\in A$ such that $\al(a_i)$ expands $E^i$ and contracts $E^{\hat{i}}$.  Therefore we have the partially hyperbolic splitting:
$$E^u(\al(a_i))=E^u_{a_i}, E^c(\al(a_i))=E^c, E^s(\al(a_i))=E^{\hat{i}}$$
Then by Lemma \ref{lemma: uniform comp imply dyn coh} $E^{\hat{i}}$ and $E^{\hat{i},c}$ are both integrable. Moreover by discussion in section \ref{subsection: bunching cond}, $W^i, W^{\hat{i}}$ both have uniformly $C^\infty-$leaves and $W^i$, $W^{\hat{i},c}$ have uniformly $C^{r+}-$leaves if $\al$ is $r-$bunched for $r\geq 1$. Then for any non-empty subset $J\subset I, J\neq \{1,\dots, n\}$, $$\oplus_{j\in J}E^j=\begin{cases}\cap_{i\notin J}E^{\hat{i},c},~~\text{if $c\in J$}\\ \cap_{i\notin J}E^{\hat{i}},~~\text{if $c\notin J$}
\end{cases}$$
hence we get integrability for $\oplus_{j\in J}E^j$. 
\end{proof}
\subsection{Regularity of center foliation and proof of Theorem \ref{main}}
Under assumptions of Theorem \ref{main}, by Corollary \ref{coro: all fol integrable} we could define $W^{i,c}$ and $W^{\hat{i},c}$ for any $i$. We denote by $h^{i,c}$ the center holonomy within $W^{i,c}$ between two $W^i$ leaves and $h^{\hat{i},c}$ the holonomy induced by $W^{\hat{i},c}$ leaves between two $W^i$ leaves. 
Now we consider $\alpha(a_i)$  as in the proof of Proposition \ref{prop: Leb dis inte along Wc} and Corollary \ref{coro: all fol integrable}, by Proposition \ref{prop: Leb dis inte along Wc} we know $\al(a_i)$ satisfying all assumptions of Proposition \ref{prop: ac implies linearity of center fol}. Therefore we have 
\begin{prop}\label{prop: smoothness of foliations}$W^{\hat{i},c}$ is a $C^{r+}$ foliation of $M$ if $\al$ is $r-$bunched for some $r\geq 1$.
\end{prop}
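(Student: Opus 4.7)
The plan is to obtain the $C^{r+}$ regularity of $W^{\hat i,c}$ as a direct application of Proposition~\ref{prop: ac implies linearity of center fol} to the single partially hyperbolic diffeomorphism $\al(a_i)$, where $a_i\in\ZZ^k$ is the element supplied by condition (A) for the coarse Lyapunov distribution $E^i$. The crucial identification is that, since $E^u(\al(a_i))=E^i$ and $E^s(\al(a_i))=E^{\hat i}$, the center-stable bundle of $\al(a_i)$ is $E^c\oplus E^{\hat i}$; its integrability (and that of the leafwise center subfoliation) has already been verified in Corollary~\ref{coro: all fol integrable}, so the center-stable foliation of $\al(a_i)$ is precisely $W^{\hat i,c}$. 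Thus the statement of Proposition~\ref{prop: ac implies linearity of center fol} specialised to $\al(a_i)$ is exactly the statement we want.

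All the hypotheses of Proposition~\ref{prop: ac implies linearity of center fol} have already been established for $\al(a_i)$. It is $C^\infty$ volume-preserving and partially hyperbolic because $\al\in PH_{\mathrm{vol}}^\infty(k,M)$; the shared center foliation $W^c$ is uniformly compact with trivial holonomy by (C); $\al(a_i)$ is uniformly quasiconformal on $E^u(\al(a_i))=E^i$ by (A), with $\dim E^i>1$ by the standing hypothesis of this section; and $\nu_M$ has Lebesgue disintegration along $W^c$ by the just-proved Proposition~\ref{prop: Leb dis inte along Wc}. The remaining ingredient, $r$-bunching of the diffeomorphism, is furnished by hypothesis (B): $a_i$ lies in a Weyl chamber which by (B) contains an $r$-bunched partially hyperbolic element $b_i$ with the same hyperbolic splitting as $a_i$, so forming an appropriate commuting product or iterate of $a_i$ and $b_i$ yields a single element that is simultaneously UQC on $E^i$ and $r$-bunched, and whose center-stable foliation still coincides with $W^{\hat i,c}$.

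The only non-automatic step is this last bunching bookkeeping, i.e.\ ensuring that a single element of $\al$ carries both the UQC property on $E^i$ from (A) and the $r$-bunching property from (B). Since each of these properties depends only on spectral ratios that are preserved under commuting products and high iterates within a fixed Weyl chamber, the existence of such a common element is a routine combinatorial matter and does not change the center-stable foliation. With such an element $f$ in hand, Proposition~\ref{prop: ac implies linearity of center fol} applies verbatim and its conclusion ``$W^{cs}(f)$ is a $C^{r+}$-foliation'' reads, in our notation, as the desired regularity of $W^{\hat i,c}$, completing the proof.
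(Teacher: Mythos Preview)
Your overall strategy---reduce to the single diffeomorphism $\al(a_i)$ with $E^u=E^i$, $E^s=E^{\hat i}$ and read off $W^{cs}(\al(a_i))=W^{\hat i,c}$---is exactly the paper's strategy, and all the identifications you make are correct. The gap is in the ``bunching bookkeeping.'' Condition~(A) gives an element $a_i$ that is UQC on $E^i$, while condition~(B) gives a (possibly different) $r$-bunched element $b_i$ in the same Weyl chamber; nothing in the hypotheses guarantees a single element with \emph{both} properties. Your claim that UQC ``depends only on spectral ratios preserved under commuting products'' is not right: UQC is a uniform bound on the distortion $K^{E^i}(x,n)$ over \emph{all} iterates, not an asymptotic spectral condition. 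If $\al(b_i)$ is not UQC on $E^i$, then for $c=na_i+mb_i$ one has $D\al(c)^N|_{E^i}=D\al(a_i)^{Nn}\circ D\al(b_i)^{Nm}|_{E^i}$, and the second factor can have unbounded distortion as $N\to\infty$ regardless of $n$. Nor does taking powers of $a_i$ improve its bunching exponent. So the single element $f$ to which you want to apply the second (bunching-dependent) conclusion of Proposition~\ref{prop: ac implies linearity of center fol} has not been produced.

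The paper sidesteps this by decoupling the two inputs. From Proposition~\ref{prop: ac implies linearity of center fol} applied to $\al(a_i)$ it uses only the \emph{bunching-free} conclusion that the center-stable holonomy $h^{cs}=h^{\hat i,c}$ between $W^i$-leaves is uniformly $C^\infty$. The $C^{r+}$ regularity of the \emph{leaves} of $W^{\hat i,c}$ comes separately from Corollary~\ref{coro: all fol integrable}, which uses the $r$-bunched element $b_i$ in the same chamber (same splitting, hence same $W^{\hat i,c}$). With uniformly $C^\infty$ transverse leaves $W^i$, uniformly $C^{r+}$ leaves of $W^{\hat i,c}$, and uniformly smooth holonomy $h^{\hat i,c}$, Lemma~\ref{lemma: reg of holonomy and foliations} (with $\mathcal L=M$, $\mathcal W=W^{\hat i,c}$, $\mathcal F=W^i$) yields that $W^{\hat i,c}$ is a $C^{r+}$ foliation. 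If you replace your last paragraph with this two-element argument, your proof is complete and matches the paper's.
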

\begin{proof}Apply Proposition \ref{prop: ac implies linearity of center fol} to the diffeomorphism $\al(a_i)$, since $W^u_{a_i}=W^i, h^c(\al(a_i))=h^{\hat{i},c}$ we know $h^{\hat{i},c}$ is uniformly $C^{\infty}$. By Corollary \ref{coro: all fol integrable}, $W^{i}$ have uniformly $C^\infty$ leaves  and $W^{\hat{i},c}$ have uniformly $C^{r+}$ leaves. Apply Lemma \ref{lemma: reg of holonomy and foliations} with $\mathcal{L}:=M, \mathcal{W}:=W^{\hat{i,c}}, \mathcal{F}:=W^{i}$, $W^{\hat{i},c}$ is a $C^{r+}$ foliaiton of $M$.
\end{proof}
As a result, we can now prove Theorem \ref{main} under assumption that for any $i$, $\dim(E^i)>1$. Suppose $\al$ is $r-$bunched, then by Proposition \ref{prop: smoothness of foliations} $W^{\hat{i},c}$ are $C^{r+}$ foliation every $i$. Then by applying Lemma \ref{lemma: intersection regular} repeatedly, we get that $W^c$ is also a $C^{r+}$ foliation. Since $W^c$ has trivial holonomy, then $N:=M/W^c$ is a $C^{r+}-$manifold, and the canonical projection $\pi: M\to N$ modulo $W^c$ is a $C^{r+}-$submersion and hence a fiber-bundle. Notice that the action $\al$ preserves $W^c-$foliation, therefore $\al$ induces an $C^{r+}$ $\ZZ^k-$action $\bar\al$ on $N$. So $\al$ is a $C^r-$fiber bundle extension over $\bar\al$.  Obviously for any regular $a\in A$, $\bar\al(a)$ is an Anosov diffeomorphism on $N$ then $\bar\al$ is an Anosov action on $N$. Therefore we constructed $N, \bar\al, \pi$ satisfing all conditions in Theorem \ref{main}. 

In particular if $r=\infty$, $W^c$ is a smooth foliation, therefore $N,\pi, \bar{\al}$ are all smooth. Moreover $\bar{\al}$ is a smooth Anosov action on $N$ and also satisfying assumption A. in Theorem \ref{main} in the sense that we still have the coarse Lyapunov splitting of $TN$: $$TN:=\bar{E}^1\oplus\cdots\oplus \bar{E}^n, \quad \bar E^i:=d\pi(E^i)$$
And for each $i$, there exists $a_i\in A$ (exactly the same $a_i$ for $\al$) such that $\bar{\al}(a_i)$ is uniformly quasiconformal on $\bar E^i$ and $\bar E^i=E^u_{a_i}$. 

Now we claim that $\bar{W}^{\hat{i}}=\pi (W^{\hat{i}})$ is a smooth foliation in $N$, which implies that each $\bar{W}^i:=\pi(W^i)$ is a smooth foliation (by applying Lemma \ref{lemma: intersection regular} several times). Take a foliation box $$\varphi:  U\to \RR^{\dim E^{\hat{i},c}}  \times \RR^{\dim E^i}, U\subset M$$  where maps the foliation $W^{\hat{i},c}\cap U$ to the standard foliation $ \RR^{\dim E^{\hat{i},c} }\times \{y\}, y\in \RR^{\dim E^i}$ (see section \ref{subsection: reg of fol}). Since $W^c$ is a smooth foliation, up to a smooth changing variable we could assume $\varphi$ maps $W^c\cap U$  to the standard foliation $$\RR^{\dim E^c}\times \{y\}\subset \RR^{\dim E^{\hat{i},c}}\subset  \RR^{\dim E^{\hat{i},c} }\times \RR^{\dim E^i}, y\in \RR^{\dim{E^{\hat{i}}}}$$ Then $\varphi$ induces a smooth map $\bar\varphi$ from $\pi(U)$ to $\RR^{\dim E^{\hat{i}}}\times \RR^{\dim E^i}\cong \RR^{\dim M}/\RR^{\dim E^c}$ which maps $\bar{W}^{\hat{i}}$ to the standard foliation $\RR^{\dim E^{\hat{i}}}\times \{y\}, y\in \RR^{\dim E^i}$. Therefore $(\bar\varphi,\pi(U))$ is a smooth foliation box for $\bar{W}^{\hat{i}}$. In summary $\bar{W}^{\hat{i}}=\pi (W^{\hat{i}})$ is a smooth foliation in $N$.


Suppose $\dim E^c=1$, we need to prove $\al$ is $\infty-$bunched and an isometric extension over $\bar\al$. The proof is basically the same as Lemma 33. \cite{BX}. For completeness we give an outline here. Since $\dim E^c=1$, $\al$ is center-bunched ($(1+)-$bunched), by previous arguments we know $W^c$ is a $C^{1+}$ foliation. Then $W^c$ has a continuous conditional density on the leaves which is invariant under $\alpha-$action. Thanks to $\dim E^c=1$ and the conditional density on $W^c$, we could rescale the metric on $E^c$ to get an $\alpha-$invariant $C^0$ metric on $E^c$. Then by the discussion in section \ref{subsection: bunching cond}, any non-trivial alement of $\alpha$ is $\infty-$bunched which implies $\alpha$ itself is $\infty-$bunched. Moreover $W^c$ is a $C^\infty$ foliation with a smooth $\al-$invariant conditional density on the leaves. Then by the same argument as before we could rescale the metric on $E^c$ to get an $\al-$invariant smoothly varing family of Riemannian metrics $\{d_x\}$ on the center leaves. Therefore $\al$ is an isometric extension over $\bar\al$.
\section{Proof of Theorem \ref{main}:  $\dim E^i=1$ for some $i$}\label{section: thm main dim ei=1}
In this section we prove Theorem \ref{main} when there exists some one dimensional $E^i$. Firstly, Corollary \ref{coro: all fol integrable} still holds. The key to prove Theorem \ref{main} is the following proposition.
\begin{prop}\label{prop: non-pathologic imply Wc good in Wic}Under assumptions of Theorem \ref{main}, if $W^c$ is non-pathological and $\al$ is $r-$bunched, then $W^c$ is a $C^{r+}$ foliation within $W^{i,c}$
\end{prop}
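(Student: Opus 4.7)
The plan is to adapt the one-dimensional argument from \cite{BX, AVW2} to the present $\mathbb Z^k$ setting; crucially, everything takes place inside a single leaf of $W^{i,c}$, so we may focus on the distinguished element $\al(a_i)$ supplied by assumption (A) with $E^u_{a_i}=E^i$. First I would apply Sadovskaya's non-stationary linearization (Lemma \ref{lemma Sadov linearization}) to $\al(a_i)$ on $E^i$. Uniform quasiconformality is automatic because $\dim E^i=1$, so one obtains a continuous family of $C^\infty$ diffeomorphisms $\Phi_x: E^i_x \to W^i(x)$ intertwining $\al(a_i)|_{W^i}$ with the derivative $D\al(a_i)|_{E^i_x}$. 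Applying Proposition \ref{prop center holonomy quasiconf}(i) to $\al(a_i)$ (with $\tilde E^u=E^i$) shows that the center holonomy $h^c$ within $W^{i,c}$ between two nearby $W^i$-leaves, read in these linearized coordinates, is a $K$-quasiconformal, hence quasisymmetric, homeomorphism of one-dimensional intervals.

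Next I would bring in the non-pathological assumption. By definition it gives both that $\nu_M$ has Lebesgue disintegration along $W^c$ and that $W^c$ is strongly absolutely continuous within $W^{i,c}$. The latter implies that, between the smooth transversals $W^i(x)$ and $W^i(y)$ inside the same $W^{i,c}$-leaf, the holonomy $h^c$ is absolutely continuous; composing with the $C^\infty$ maps $\Phi_x, \Phi_y$ yields that $H_{x,y}:=\Phi_y^{-1}\circ h^c \circ \Phi_x$ is an absolutely continuous quasisymmetric homeomorphism of $\mathbb R$. Since $\al(a_i)$ preserves $W^c$ (and hence $h^c$ is equivariant with respect to center-preserving maps), combining this with property (1) of Sadovskaya's lemma yields the self-similarity relation
\[
H_{x,y}(t)=\bigl(D\al(a_i)|_{E^i_y}\bigr)^{-1}\,H_{\al(a_i)x,\,\al(a_i)y}\!\bigl(D\al(a_i)|_{E^i_x}\,t\bigr).
\]
The heart of the proof is to upgrade $H_{x,y}$ from absolute continuity to uniformly $C^{r+}$, by iterating this cocycle identity and using the Lebesgue disintegration of $\nu_M$ along $W^c$ to control the Radon--Nikodym derivative of $H_{x,y}$, up to the regularity allowed by $r$-bunching. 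This is precisely the invariance-principle bootstrap carried out in \cite{AVW2} and the $1$-dimensional portion of \cite{BX}; since the argument is purely local inside $W^{i,c}$, it transfers verbatim with $\al(a_i)$ taking the role of the single diffeomorphism.

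Once $h^c$ is uniformly $C^{r+}$, the conclusion is a routine application of Lemma \ref{lemma: reg of holonomy and foliations}, taken within the ambient leaf $\mathcal{L}=W^{i,c}$, with $\mathcal{F}=W^i$ and $\mathcal{W}=W^c$: by Corollary \ref{coro: all fol integrable}, $W^{i,c}$ has uniformly $C^{r+}$ leaves under $r$-bunching, $W^i$ has uniformly $C^\infty$ leaves, and we have just shown the $W^c$-holonomy between $W^i$-leaves is uniformly $C^{r+}$. Lemma \ref{lemma: reg of holonomy and foliations} then yields that $W^c$ is a $C^{r+}$ foliation within each $W^{i,c}$-leaf, as required.

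The main obstacle, and the only non-formal step, is the bootstrap from absolute continuity to $C^{r+}$ smoothness of $H_{x,y}$. In the higher-dimensional case treated in Section \ref{section: proof of thm 1, dim>1}, this is free: $K$-quasiconformality together with Lebesgue disintegration forces smoothness via Proposition \ref{prop: ac implies linearity of center fol}. In the $1$-dimensional case quasiconformality is vacuous and one really has to exploit the self-similarity above together with the disintegration hypothesis, replaying the $1$-dimensional invariance-principle argument of \cite{AVW2, BX} essentially unchanged. The remaining work in the plan is largely bookkeeping to verify that the regularity and absolute-continuity hypotheses needed by that argument are supplied by non-pathology of $W^c$ and by the $r$-bunching of $\al$ through Corollary \ref{coro: all fol integrable}.
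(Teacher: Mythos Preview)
Your overall framework is correct and matches the paper: reduce to the single diffeomorphism $\al(a_i)$, use Sadovskaya linearization, establish smoothness of the center holonomy $h^c$ in these charts, then invoke Lemma~\ref{lemma: reg of holonomy and foliations}. However, your description of the crucial middle step---the upgrade from absolute continuity to smoothness---does not match what actually happens in \cite{BX} or in the paper, and the mechanism you propose would not work as stated.

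The self-similarity relation you write is simply the equivariance of $h^c$ under the diffeomorphism $\al(a_i)$; iterating it sends $(x,y)$ to $(\al(a_i)^n x,\al(a_i)^n y)$, and there is no clear way this controls the Radon--Nikodym derivative or yields regularity. The paper's argument (Proposition~\ref{prop: single diff, non-pathologic imply smoothness}) is structurally different: one builds the fiber space $\mathcal E=\{(x,y):y\in W^c(x)\}$ with an $F$-invariant measure $\mu$, shows $H^c_{x,y}:=D_xh^c_{x,y}$ exists $\mu$-a.e.\ (Lemma~\ref{lemma: H^c mu ae welldef}), and then proves that $H^c$ intertwines the unstable and stable \emph{cocycle holonomies} $H^u$, $H^s$ of $D\al(a_i)|_{E^i}$ (Lemma~\ref{lemma: holo eqva}). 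Combined with Lemma~\ref{lemma: affine transitions}, the $H^u$-equivariance forces the derivative of $\Phi_y^{-1}\circ h^c_{x,y}\circ\Phi_x$ to be constant, so $h^c$ is in fact \emph{linear} in the Sadovskaya charts---stronger than any $C^{r+}$ bootstrap. Two specific consequences: your claim that ``everything takes place inside a single leaf of $W^{i,c}$'' is not correct, since the stable holonomy $H^s$ lives over $W^{\hat i}$, transverse to $W^{i,c}$; and the paper explicitly does \emph{not} use the invariance principle of \cite{AVW2} (see the introduction), so citing it here is misleading. Finally, Proposition~\ref{prop center holonomy quasiconf} carries the hypothesis $\dim\tilde E^u>1$ and therefore cannot be applied when $\dim E^i=1$ (as you note, one-dimensional quasiconformality is vacuous); for $\dim E^i>1$ the paper instead quotes Proposition~\ref{prop: ac implies linearity of center fol} directly.
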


Suppose Proposition \ref{prop: non-pathologic imply Wc good in Wic} is true. Consider foliations $W^{1,c}, W^{2,c}, W^{1,2,c}$, by Corollary \ref{coro: all fol integrable}, they are all integrable and have $C^{r+}$ leaves if $\al$ is $r-$bunched. Moreover by Proposition \ref{prop: non-pathologic imply Wc good in Wic}, $W^c=W^{1,c}\cap W^{2,c}$ is a $C^{r+}$ foliation within $W^{1,c}$ and $W^{2,c}$. By Lemma \ref{lemma: Foliation version Journe Lemma}, $W^c$ is also a $C^{r+}$ foliation within $W^{1,2,c}$.

Now we consider $W^{1,2,c}, W^{3,c},W^{1,2,3,c}$, by the same argument we conclude that $W^c$ is also a $C^{r+}$ foliation within $W^{1,2,3,c}$. 

Apply the same method repeatly we can prove that $W^c$ is a $C^r$ folation of the whole manifold $M$. And the rest of the proof of Theorem \ref{main} (construction of $N,\bar{\al}, \pi$, the case of $r=\infty$ of $\dim(W^c)=1$) is the same as the case when $\dim(E^i)>1$ for each $i$.

The rest of the section is for the proof of Proposition \ref{prop: non-pathologic imply Wc good in Wic}.
\subsection{Proof of Proposition \ref{prop: non-pathologic imply Wc good in Wic}}
For $i$ such that $\dim E^i>1$, consider $a_i\in A$ such that $\al(a_i)$ satisfying assumption A. in Theorem \ref{main}. By Proposition \ref{prop: ac implies linearity of center fol}, $W^c$ is a $C^{r+}$ foliation within $W^{i,c}$.

Now we consider $i$ such that $\dim E^i=1$, by assumption A. we can take an $a_i\in A$ such that $E^u(\al(a_i))=E^i$. As before we denote by $h^{i,c}$ the center holonomy within $W^{i,c}$ between two $W^i$ leaves. By our assumption we have that for any two local $W^i$ leaves $W^i_{loc}(x), W^i_{loc}(y)$ within the same $W^{i,c}$ leaf, the map $$h^{i,c}_{xy}:W^i_{loc}(x)\to W^i_{loc}(y)$$ is absolutely continuous. Then Proposition \ref{prop: non-pathologic imply Wc good in Wic} is a direct corollary of the following proposition, Lemma  \ref{lemma: reg of holonomy and foliations} and $r-$bunching condition.

\begin{prop}\label{prop: single diff, non-pathologic imply smoothness}Suppose $f:M\to M$ is a smooth volume preserving partially hyperbolic diffeomorphism and $\dim E^u=1$. We assume that $W^c(f)$ is compact and with trivial holonomy. Moreover $\nu_M$ and has Lebesgue disintegration along $W^c$. If for any two local unstable leaves $W^u_{loc}(x), W^u_{loc}(y)$ in the same center-unstable leaf,  the center holonomy $h^c_{x,y}:W^u_{loc}(x)\to W^u_{loc}(y)$ is absolutely continuous, then $h^c$ is uniformly smooth on any local unstable leaf.
\end{prop}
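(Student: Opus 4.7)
\textbf{Proof plan for Proposition \ref{prop: single diff, non-pathologic imply smoothness}.} The plan is to linearize $f$ on the one-dimensional unstable foliation via Sadovskaya and translate the problem into a functional equation for the Jacobian of the center holonomy in these coordinates; the Lebesgue disintegration hypothesis then provides enough regularity to let backward iteration of the dynamics force the Jacobian to be constant, so that the holonomy becomes affine, hence smooth.

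\emph{Step 1 (linearization).} Since $\dim E^u = 1$, $f$ is trivially uniformly quasiconformal on $E^u$ and $W^u$ has $C^\infty$ leaves. Lemma \ref{lemma Sadov linearization} yields a family $\{\Phi_x\colon E^u_x \to W^u(x)\}_{x\in M}$ of $C^\infty$ diffeomorphisms, varying continuously in $x$ in the $C^\infty$ topology, with $\Phi_x(0)=x$, $D_0\Phi_x=\mathrm{id}$, and $\Phi_{f(x)} \circ Df_x = f \circ \Phi_x$. In these coordinates $f|_{W^u(x)}$ is multiplication by $\lambda(x) := |Df_x|_{E^u}| > 1$.

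\emph{Step 2 (functional equation).} For $y \in W^c_{loc}(x)$ set $H_{xy} := \Phi_y^{-1} \circ h^c_{xy} \circ \Phi_x$, a monotone absolutely continuous homeomorphism of an interval in $E^u_x\cong\RR$ onto one in $E^u_y\cong\RR$ fixing $0$. The commutation $f \circ h^c_{xy} = h^c_{f(x),f(y)} \circ f$ becomes the scaling relation
\[
H_{f(x),f(y)}\bigl(\lambda(x)\,t\bigr) = \lambda(y)\,H_{xy}(t),
\]
whose derivative (defined a.e.) yields a cocycle equation for $J_{xy}(t) := H'_{xy}(t)$:
\[
\lambda(x)\,J_{f(x),f(y)}\bigl(\lambda(x)\,t\bigr) = \lambda(y)\,J_{xy}(t).
\]

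\emph{Step 3 (continuity and positivity of $J$ from disintegration).} The Lebesgue disintegration of $\nu_M$ along $W^c$, restricted to a foliation box saturated by $W^{cu}$, induces a disintegration of the leafwise volume $\nu_{W^{cu}}$ along $W^c$ with a positive conditional density that is continuous on each $W^{cu}$ leaf. Combining this with the absolute continuity hypothesis for $h^c$ between unstable leaves inside $W^{cu}$ (a Fubini-type argument within $W^{cu}$), the Jacobian $J_{xy}(t)$ is identified with a continuous positive ratio of these conditional densities, uniformly bounded above and away from zero and continuous in $(x,y,t)$ on compact families of pairs $(x,y)\in W^c(x)$.

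\emph{Step 4 (rigidity via backward iteration).} Iterating the cocycle backward,
\[
J_{xy}(t) = J_{xy}(0)\cdot\frac{J_{f^{-n}(x),f^{-n}(y)}\bigl(t/\prod_{k=1}^{n}\lambda(f^{-k}(x))\bigr)}{J_{f^{-n}(x),f^{-n}(y)}(0)}.
\]
Because $\lambda \geq \lambda_0 > 1$, the rescaled argument tends to $0$ as $n\to\infty$, while the equicontinuity at $0$ of the family $\{J_{x',y'}\}$ (Step 3, using compactness of $M$ and of center leaves so that $f^{-n}(y)$ stays at bounded center distance from $f^{-n}(x)$) forces the ratio to tend to $1$. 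Hence $J_{xy}(t) \equiv J_{xy}(0)$; so $H_{xy}$ is linear, and $h^c_{xy} = \Phi_y \circ H_{xy} \circ \Phi_x^{-1}$ is uniformly $C^\infty$.

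The main obstacle is Step 3: upgrading the measure-theoretic ``Lebesgue disintegration of $\nu_M$ along $W^c$'' to a genuinely continuous conditional density on $W^{cu}$ leaves, and then showing equicontinuity at the origin of the resulting Jacobians uniformly in the base point. The invariance $f_*\nu_M=\nu_M$ combined with continuity of $Df|_{E^u}$ and of the linearizing charts $\Phi_x$ will yield a continuous (indeed smooth in the leafwise variable) density on each $W^{cu}$ leaf; the compactness of $M$ and of the center foliation then provides the required uniformity needed in Step 4.
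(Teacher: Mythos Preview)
Your Steps 1 and 2 are fine and essentially coincide with the paper's setup. The gap is exactly where you yourself flag it: Step 3. The hypothesis ``$\nu_M$ has Lebesgue disintegration along $W^c$'' only says the conditional measures are \emph{equivalent} to Lebesgue; it gives no continuity of the density, and your suggested mechanism (``invariance $f_*\nu_M=\nu_M$ combined with continuity of $Df|_{E^u}$ and of $\Phi_x$'') does not produce it. The invariance gives a functional equation relating $\rho_{f(x)}$ to $\rho_x$, not continuity of $\rho_x(\cdot)$ on a single leaf, and the classical Pesin-type argument yielding continuous Jacobians applies to the \emph{unstable} holonomy, not to $W^c$. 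Without continuous $J_{x'y'}$, Step 4 collapses: $J_{xy}(0)$ is only defined for $\mu$-a.e.\ pairs (this is precisely the content of Lemma \ref{lemma: H^c mu ae welldef}, which you are implicitly reproving), and the ``equicontinuity at $0$'' you invoke for the family $\{J_{f^{-n}x,f^{-n}y}\}$ is exactly what you have not established.

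The paper avoids this difficulty by never asking for continuity of $J_{xy}(\cdot)$. Instead it works on the fiber space $\mathcal{E}=\{(x,y):y\in W^c(x)\}$ with the natural $F$-invariant measure $\mu$, and regards $H^c_{x,y}:=D_x h^c_{x,y}$ as a merely $\mu$-a.e.\ defined map. The key input you are missing is the linear-cocycle Holonomies $H^u$, $H^s$ of $Df|_{E^u}$ (Lemma \ref{lemma: Holonomies}), which \emph{are} uniformly continuous. The crucial Lemma \ref{lemma: holo eqva} shows $H^c$ intertwines with $H^u$ (and $H^s$) on a full-measure set $\Omega$; together with Lemma \ref{lemma: affine transitions} this forces $D_v(\Phi_y^{-1}\circ h^c_{x,y}\circ\Phi_x)=H^c_{x,y}$ for a.e.\ $v$, so the absolutely continuous map $\Phi_y^{-1}\circ h^c_{x,y}\circ\Phi_x$ has a.e.\ constant derivative and is therefore linear. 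Continuity is recovered only at the very end, from the uniform continuity of $h^c$ itself. In short, rather than upgrading regularity of the Jacobian first and iterating second, the paper transports the \emph{pointwise} derivative at $0$ along $W^u$ via the continuous object $H^u$; this is the missing idea in your plan.
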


\begin{proof}The proof is similar to Chapter 4. (the proof of Lemma 27.) of \cite{BX}. For completeness we give an outline here.

\textbf{Step 1: construction of a fiber space $\mathcal{E}$.} We now consider the space 
\[
\mathcal{E}=\{(x,y)\in M^2:  y\in W^c(x) \},
\]
and we define $F: \mathcal{E} \rightarrow \mathcal{E}$ by $F(x,y) = (f(x),f(y))$. Since $W^c$ is compact and with trivial holonomy, we can easily prove that $\mathcal{E}$ is a continuous fiber bundle over $M$ with compact fibers.  By considering the measurable partion of $M$ into $M=\cup_{x\in M}W^c(x)$ we get the family of conditional measure $\{m^c(x)\}$ of $\nu_M$ on $W^c(x) $ determined  by the partition. Since $f$ preserves $\nu_M$, we have $f_\ast m^c(x)=m^c(f(x))$ for $\nu_M-a.e. x\in M$. Moreover the measures $m^{c}(x)$ are equivalent to the Riemannian volume on $W^c$ since $\nu_M$ has Lebesgue disintegration along $W^{c}$ leaves. We define the measure $\mu$ on the fiber bundle $\mathcal{E}$ by setting, for any measurable set $A\subset \mathcal{E}$, 
\[
\mu(A)=\int \mathbf{1}_A(x,y)dm^c(x)(y) d\nu_M(x),
\]
where $\mathbf{1}_{A}$ denotes the characteristic function of $A$. Obviously $\mu$ is $F-$invariant.

\textbf{Step 2: $H^c_{\cdot,\cdot}:=Dh^c_{\cdot,\cdot}$ is well-defined $\mu-a.e.$. }Since $W^c$ has trivial holonomy, $h^c_{x,y}: W^u_{loc}(x)\to W^u_{loc}(y) $ is well-defined for any $(x,y)\in \mathcal{E}$, the following lemma shows that the derivative of $h^c$ is well-defined $\mu-a.e.$. 
\begin{lemma}\label{lemma: H^c mu ae welldef}

Let\begin{align*}
Q: = \{x \in M: \text{for $\nu_{W^c}(x)$-a.e. $y \in W^c$}, \
\text{$h^{c}_{x,y}:W^{u}_{loc}(x) \rightarrow W^{u}_{loc}(y)$ is differentiable at $x$}\}.
\end{align*}
Then $\nu_M(M-Q)=0$.
\end{lemma}
\begin{proof}See Proposition 23. in \cite{BX} (notice that here for any two local unstable leaves $W^u_{loc}(x), W^u_{loc}(y)$ in the same center-unstable leaf, $h^c_{x,y}:W^u_{loc}(x)\to W^u_{loc}(y)$ is absolutely continuous).\end{proof}




As a consequence, by definition of $\mu$, since $m^c(x)$ is equivalent to $\nu_{W^c}(x )$ for $\nu_M-a.e.~x$, we have that $h^c_{x,y}$ is well-defined for $\mu-a.e. ~(x,y)\in \mathcal{E}$. We denote $H^c_{x,y}:=Dh^c_{x,y}$, then $H^c$ is a $\mu-a.e.$ defined (non-zero) map on $\mathcal{E}$.

\textbf{Step 3: (Un)stable \textit{Holonomies} of cocycle $Df|_{E^u}$.} In this step we consider a family of linear map from $E^u(x)$ to $E^u(y)$ for any $x,y$ in the same local (un)stable leaves. We call it \textbf{Holonomy} of cocycle $Df|_{E^u}$.

The bundle $E^{u}$ is a H\"older continuous subbundle of $TM$  with some H\"older exponent $\beta > 0$ \cite{Pugh-Shub-Wilkinson}. Therefore the restriction $Df|_{E^{u}}$ is a H\"older continuous linear cocycle over $f$ in the sense of \cite{KS13}. For $x,y \in M$ two nearby points we let $I_{xy}: E^{u}(x) \rightarrow E^{u}(y)$ to be the unique linear identification such that $v, I_{xy}(v)$ same length and direction (locally it always works since $\dim{E^u}=1$). Obviously $I_{xy}$ is $\beta$-H\"older close to the identity. And the cocycle $Df|_{E^{u}}$ is \textit{fiber-bunched} since $\dim E^u=1$ (cf. \cite{KS13}). The following proposition thus applies to $Df|_{E^{u}}$.

\begin{lemma}\cite[Proposition 4.2]{KS13}\label{lemma: Holonomies}
For $y \in \mathcal{W}^{u}_{loc}(x)$, the limit 
\[
\lim_{n \rightarrow \infty}Df^{n}_{f^{-n}y} \circ I_{f^{-n}xf^{-n}y} \circ Df^{-n}_{x} |_{E^{u}} :=H^{u}_{xy},
\]
exists uniformly in $x$ and $y$ and defines a linear map from $E^{u}_{x}$ to $E^{u}_{y}$ with the following properties for $x,y,z \in M$,
\begin{enumerate}
\item $H^{u}_{xx} = Id$ and $H^{u}_{yz} \circ H^{u}_{xy} = H^{u}_{xz}$;
\item $H^{u}_{xy} = Df^{n}_{f^{-n}y} \circ H^{u}_{f^{-n}xf^{-n}y} \circ Df^{-n}_{x}$  for any $n \geq 0$. 
\item $\| H^{u}_{xy} - I_{xy}\| \leq Cd(x,y)^{\beta}$, $\beta$ the exponent of H\"older continuity for $E^{u}$.  
\end{enumerate}
Furthermore $H^{u}$ is the unique collection of linear identifications with these properties. Similarly if $y \in \mathcal{W}^{s}_{loc}(x)$ then the limit $\lim_{n \rightarrow \infty}Df^{-n}_{y} \circ I_{f^{n}xf^{n}y} \circ Df^{n}_{x} |_{E^{u}} :=H^{s}_{xy}$ exists and gives a linear map from $E^{u}_{x}$ to $E^{u}_{y}$ with analogous properties. $H^{u}$ and $H^{s}$ are known as the \emph{unstable} and \emph{stable Holonomies} of $Df|_{E^{u}}$ respectively. 
\end{lemma}

Since $f$ is uniformly quasiconformal on $E^u$, by Lemma \ref{lemma Sadov linearization} there is a family of coordinates $\{\Phi_x\}_{x\in M}$ satisfying all the properties of Lemma \ref{lemma Sadov linearization}. Moreover we have the following property:

\begin{lemma}[Proposition 9 of \cite{BX}]\label{lemma: affine transitions}
For each $x \in M$ and $y \in \mathcal{W}^{u}(x)$ the map $\Phi_{y}^{-1} \circ \Phi_{x}: E^{u}_{x} \rightarrow E^{u}_{y}$ is an affine map with derivative $H^{u}_{xy}$. 
\end{lemma}

\textbf{Step 4: $H^c$ is equivariant with $H^{u(s)}$}
The key to prove smoothness of $h^c$ is the following Lemma:

\begin{lemma}\label{lemma: holo eqva}There is a full $\mu$-measure subset $\Omega$ of $\mathcal{E}$ such that if $(x,y)$, $(z,w) \in \Omega$ with $z \in W^{u}_{loc}(x)$ and $w \in W^{u}_{loc}(y)$ then the following equation holds,
\begin{equation}\label{equation: cu holonomy}
H^{c}_{z,w}\circ H^u_{xz}=H^u_{yw}\circ H^c_{x,y},
\end{equation}
and similarly if $(x,y),(z,w) \in \Omega$ with $z \in W^{s}_{loc}(x)$ and $w \in W^{s}_{loc}(y)$ then,
\begin{equation}\label{equation: cs holonomy}
H^{c}_{z,w}\circ H^s_{xz}=H^s_{yw}\circ H^{c}_{x,y}.
\end{equation}
\end{lemma}
\begin{proof}The proof is basically the same as that of Lemma 24 in \cite{BX}. For any $x,y\in M$ and a linear map $H: E^u(x)\to E^u(y)$, we denote by $|H|:=\frac{\|H\cdot v\|}{\|v\|}, v\in E^u(x)-\{0\}$. Obviously $H$ is well-defined since $\dim E^u=1$. Since $W^c$ has trivial holonomy, the bundle $E^u$ along $W^c$ is orientable and $h^c$ between two unstable leaves preserving the orientation. Therefore we could replace $\det(H)$ in \cite{BX} by $|H|$ here, the rest of the proof is exactly the same.
\end{proof}

\textbf{Step 5: Linearity of $h^c$.}
The final step is to prove $h^c$ is actually a linear map on the charts $\{\Phi_x\}_{x\in M}$. The proof is similar to Lemma 27 in \cite{BX}. In fact, since $D_0\Phi_x=id$, we have $$D_{0}(\Phi_{y}^{-1} \circ h^{c}_{x,y} \circ \Phi_{x}) = H^{c}_{x,y}$$for any $(x,y)\in \Omega$. But by Lemma \ref{lemma: holo eqva}, \ref{lemma: affine transitions}, for any $(x,y)\in \Omega$ and any $v\in E_x^u$ such that $(\Phi_x(v), h^c_{x,y}(\Phi_x(v)))\in \Omega$, as in \cite{BX} we can also get that 
$$
D_{v}(\Phi_{y}^{-1} \circ h^{c}_{x,y} \circ \Phi_{x}) = H^{c}_{x,y},
$$
Then by absolute continuity of $h^c_{x,y}: W^u(x)\to W^u(y)$ we have that for any $(x,y)\in \Omega$, $\Phi_{y}^{-1} \circ h^{c}_{x,y} \circ \Phi_{x}$ is a $C^{1}$ map with derivative $H^{c}_{x,y}$ everywhere on $\Phi_x^{-1}(W_{loc}^u(x))$, i.e., $\Phi_{y}^{-1} \circ h^{c}_{x,y} \circ \Phi_{x}$ coincides exactly with the linear map $H^{c}_{x,y}$.  

By uniform continuity of $\Phi_{y}^{-1} \circ h^{c}_{x,y} \circ \Phi_{x}$ on the pair $(x,y)$ we conclude $H^c_{x,y}$ also depends uniformly continuously on the pairs $(x,y)$ on the pairs $(x,y)\in \Omega$. Since $\Omega$ is dense in $\mathcal{E}$, the measurable function $H^{c}$ on $\Omega$ therefore admits a continuous extension to $\mathcal{E}$. Now it is easy to see that for any $(x,y)\in \mathcal{E}$, $h^c_{x,y}$ is linear on the charts $\{\Phi_{x}\}_{x \in M}$ with derivative $H^c_{x,y}$ (cf. \cite{BX}). As a corollary, we proved Proposition \ref{prop: non-pathologic imply Wc good in Wic} which implies Theorem \ref{main} in the case $\dim E^i=1$ for some $i$.

\end{proof}



\section{Proof of Theorem \ref{main: K-S conservative}}\label{section: conservative KS}Recall that in Theorem \ref{main: K-S conservative}, $\al:\ZZ^k\to \Diff^\infty_{\mathrm{vol}}(M)$ is a higher rank smooth volume preserving Anosov action satisfying the same condition as assumption (A) of Theorem \ref{main}. Firstly we have the following higher-rank properties for $\al$.

\begin{lemma}\label{lemma: assp A+higher rank imply tns} 
Let $\al:\ZZ^k\to \Diff^\infty_{\mathrm{vol}}(M)$ be not rank-one smooth volume preserving Anosov action which satisfies the following: for every coarse Lyapunov distribution $E_i$, there exists a regular element $a\in \ZZ^k$ such that $E^i=E^u_a$.  Then the following hold: 
\begin{enumerate} \item $\al$ is totally non-symplectic, i.e. there is no Lyapunov functional $\chi,\chi'$ such that $\chi$ is negatively proportional to $\chi'$.
\item $\al$ is resonance-free in the sense that for any Lyapunov functionals $\chi_i,\chi_j,\chi_k$, $\chi_i-\chi_j$ is not proportional to $\chi_k$.
\end{enumerate} 
\end{lemma}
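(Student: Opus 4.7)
The plan is to argue both parts by contradiction, unpacking the hypothesis into the following working form: whenever $E^i = E^u_a$ for a regular $a$, every Lyapunov functional $\lambda$ that is \emph{not} a positive multiple of $\chi_i$ must satisfy $\lambda(a) < 0$ (the strict inequality follows from regularity of $a$, which rules out $\lambda(a)=0$). With this dictionary in hand, TNS and resonance-freeness both reduce to reading off signs at one or two carefully chosen regular elements.

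For (1), I would suppose for contradiction that $\chi' = c\chi$ for Lyapunov functionals $\chi,\chi'$ and some $c < 0$. Because $\alpha$ is not rank-one, not all Lyapunov hyperplanes coincide, so there exists a Lyapunov functional $\mu$ with $\ker\mu \neq \ker\chi = \ker\chi'$; in particular $\mu$ is proportional to neither $\chi$ nor $\chi'$. Applying the hypothesis to $E_\mu$ produces a regular $a$ with $E^u_a = E_\mu$, which forces both $\chi(a) < 0$ and $\chi'(a) < 0$. But then $\chi'(a) = c\chi(a) > 0$, the desired contradiction.

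For (2), I would assume $\chi_i - \chi_j = t\chi_k$ for distinct indices $i,j,k$ and some $t \neq 0$. By part (1) and the fact that the representatives $\chi_\ell$ are, by definition, not positive multiples of one another, the three functionals $\chi_i,\chi_j,\chi_k$ are pairwise non-proportional. Choose regular $a,b$ with $E^u_a = E^i$ and $E^u_b = E^j$. Evaluating the putative resonance at $a$, where $\chi_i(a) > 0$ and $\chi_j(a),\chi_k(a) < 0$, gives
\[
t\chi_k(a) \;=\; (\chi_i - \chi_j)(a) \;>\; 0,
\]
whence $t < 0$ since $\chi_k(a) < 0$. The symmetric evaluation at $b$ gives $\chi_i(b)<0$, $\chi_j(b)>0$, $\chi_k(b)<0$, hence
\[
t\chi_k(b) \;=\; (\chi_i - \chi_j)(b) \;<\; 0,
\]
which forces $t > 0$, contradicting the previous line.

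The only conceptually delicate point is the use of "not rank-one" in (1): a rank-one configuration with exactly two (negatively proportional) Lyapunov functionals is actually compatible with the hypothesis of the lemma, since each of the two coarse distributions is the unstable bundle of a suitable Anosov element. A third Lyapunov functional with a genuinely different kernel is therefore essential—its associated regular element is what simultaneously destroys both members of the putative pair. Everything else is bookkeeping of signs.
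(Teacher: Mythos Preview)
Your proof is correct and follows essentially the same approach as the paper's: both parts are handled by contradiction via sign analysis at well-chosen regular elements furnished by the fullness hypothesis. The only cosmetic difference is in part (2), where the paper evaluates at an element $a$ with $E^j=E^s_a$ (the inverse of your $b$) and an element $b$ with $E^i=E^u_b$ (your $a$), whereas you use the two elements with $E^i$ and $E^j$ as full unstable; the resulting sign contradictions on $t$ are identical. Your explicit remark that $\chi_i,\chi_j,\chi_k$ may be taken pairwise non-proportional (using part (1) and the convention on coarse representatives) is a small clarification the paper leaves implicit.
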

\begin{proof}
(1) Suppose that there are $i, j$ such that $\chi_i=c\chi_j$ for some $c<0$. Then these two Lyapunov functionals share the same Weyl chamber wall i.e. $\ker \chi_i=\ker \chi_j$. Since $\al$ is assumed to be not rank-one, there is at least one more Lyapunov exponent $\chi_k$ which is not proportional to   $\chi_i$ and $\chi_j$. By assumption (A) there exists a regular element $a$ such that $E^k=E^u_a$. This implies $\chi_k(a)>0$, but $\chi_i(a)<0$ and $\chi_j(a)<0$.  The last two inequalities are not possible for any regular element because  $\chi_i$ and $\chi_j$ are negatively proportional. 

(2) Suppose that there are three Lyapunov functionals $\chi_i, \chi_j$ and $\chi_k$ such that $\chi_i-\chi_j = c\chi_k$ for some $c\ne 0$. Direct consequence of the assumption on $\al$ is that there exists regular element $a$  for which  $E^j=E^s_a$. Then $\chi_j(a)<0$, but $\chi_i(a)>0$ and $\chi_k(a)>0$. This implies $c>0$.

On the other hand, there exists regular element $b$ for which $E^i=E^u_b$. Then $\chi_i(b)>0$, but $\chi_j(b)<0$ and $\chi_k(b)<0$. This implies $c<0$. Therefore we can conclude $c=0$ which contradicts the assumption.
\end{proof} 

Firstly we proved Theorem \ref{main: K-S conservative} under the assumptions of whether there exists one-dimensional $E^i$.
\subsection{Case 1: there exists $i$ such that $\dim E^i=1$}\label{subsection: case 1 chapter proof KS} Since there exists $i$ such that $E^i$ is one dimensional, by our assumption of $\al$ (assumption A in Theorem \ref{main}) there exists $a_i\in A$ such that $\al(a_i)$ is a codimensional one Anosov diffeomorphism on $M$. By Franks-Newhouse theorem (cf. \cite{Franks},\cite{Newhouse}), $M$ is topological a torus. By a classical result of Franks and Manning we could find a homeomorphism $h:M\to M$ such that $h\circ \al(a_i)\circ h^{-1} $ is a linear automorphism on the torus. By commutivity, we can conclude that $h\circ \al\circ h^{-1}$ is a linear action on the torus. 

Notice that by Lemma \ref{lemma: assp A+higher rank imply tns}, our action $\alpha$ is totally non-symplectic. Then the linear action $h\circ \al\circ h^{-1}$ is also totally non-symplectic by the remark (5) in \cite{TNK}. It is easy to see that any totally non-symplectic linear action on the torus, in particular $h\circ \al\circ h^{-1}$, has no rank one factor in the sense of \cite{HW} (because it would have modulo a finite group all elements with same stable and unstable and these cannot be in the same stable for some element on the action\color{black}). Since  there exists one element of $\al$ which is Anosov, then by \cite{HW} we know that $\al$ conjugates to its linearization (i.e. $h\circ \al\circ h^{-1}$) by a smooth diffeomorphism on torus.
\subsection{Case 2: every $E^i$ is higher dimensional}

 As before we denote by $$W^{\hat{i}}:=W^{1,\dots,i-1,i+1,\dots}$$ Moreover we denote the holonomy along $W^{\hat{i}}$ foliation between local $W^i$ leaves by $h^{\hat{i}}$.
\subsubsection{Smoothness of coarse Lyapunov distributions}
\begin{lemma}\label{lemma: smooth foliation thm KS}
All coarse Lyapunov foliations $W^i$  are smooth foliations of $M$.
\end{lemma}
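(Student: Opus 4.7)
The plan is to establish smoothness of each stable foliation $W^{\hat{i}} := W^s_{a_i}$ by applying Proposition \ref{prop: ac implies linearity of center fol} to the single Anosov element $\al(a_i)$ provided by condition (A), treating it as a degenerate partially hyperbolic diffeomorphism with trivial center, and then to recover each coarse Lyapunov foliation $W^i$ as a transverse intersection of these smooth foliations. This mirrors the strategy already used in Section \ref{section: proof of thm 1, dim>1} for the partially hyperbolic case.

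First, for each $i$, condition (A) supplies $a_i \in \ZZ^k$ such that $\al(a_i)$ is Anosov and uniformly quasiconformal on $E^i$, with $E^u_{a_i} = E^i$ and $E^s_{a_i} = E^{\hat{i}} = \bigoplus_{j \neq i} E^j$; by the Case 2 hypothesis $\dim E^i > 1$. Viewing $\al(a_i)$ as a partially hyperbolic diffeomorphism with $E^c = \{0\}$, the hypotheses of Proposition \ref{prop: ac implies linearity of center fol} all hold: $\al(a_i)$ is $C^\infty$ and volume-preserving, the trivial center foliation is compact with trivial holonomy, the volume disintegrates trivially along $W^c$, and $\dim E^u = \dim E^i > 1$. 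With $E^c$ trivial, the four $r$-bunching inequalities in Section \ref{subsection: bunching cond} become vacuous, so $\al(a_i)$ is automatically $\infty$-bunched. Proposition \ref{prop: ac implies linearity of center fol} then yields that $W^{cs} = W^{\hat{i}}$ is a $C^\infty$ foliation of $M$.

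To recover smoothness of $W^i$ itself, I use the elementary identity
\begin{equation*}
\bigcap_{j \neq i} E^{\hat{j}} \;=\; \bigcap_{j \neq i} \bigoplus_{k \neq j} E^k \;=\; E^i,
\end{equation*}
so $W^i$ is tangent to the intersection of the tangent distributions of the $W^{\hat{j}}$ for $j \neq i$. A direct dimension count shows that at every stage the partial intersection $\bigcap_{l \leq m} W^{\hat{j_l}}$ is transverse to the next $W^{\hat{j_{m+1}}}$ (their tangent bundles span $TM$ since the excluded indices differ), so Lemma \ref{lemma: intersection regular} applies iteratively; after $n-1$ steps, where $n$ is the number of coarse Lyapunov distributions, one obtains that $W^i = \bigcap_{j \neq i} W^{\hat{j}}$ is a $C^\infty$ foliation of $M$.

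The delicate point that must be stated explicitly is the degenerate application of Proposition \ref{prop: ac implies linearity of center fol}, namely the interpretation of the bunching inequalities when $E^c = \{0\}$: the factors $\|D_pf^k|_{E^c}\|^{\pm 1}$ drop out and each inequality reduces either to the domination already built into being Anosov or to a vacuous statement, so $\al(a_i)$ is $\infty$-bunched. Once this interpretation is pinned down, the rest of the argument is a clean, if iterative, application of intersection of smooth foliations, and no additional dynamical input is needed.
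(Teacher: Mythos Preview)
Your argument is correct and follows the same two-step scaffold as the paper: first show each $W^{\hat{i}}$ is a $C^\infty$ foliation, then recover $W^i = \bigcap_{j\neq i} W^{\hat{j}}$ by iterated application of Lemma \ref{lemma: intersection regular}. The only genuine difference lies in how smoothness of $W^{\hat{i}}$ is obtained. The paper invokes Theorems 1.3 and 1.4 of \cite{Sadovskaya} directly: those results, stated for Anosov diffeomorphisms, give that if $\al(a_i)$ is uniformly quasiconformal on $E^u_{a_i}=E^i$ then the stable holonomy $h^{\hat{i}}$ is uniformly $C^\infty$, whence (via Lemma \ref{lemma: reg of holonomy and foliations}) $W^{\hat{i}}$ is a smooth foliation. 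You instead apply Proposition \ref{prop: ac implies linearity of center fol} in the degenerate case $E^c=\{0\}$. This is legitimate---Proposition \ref{prop: ac implies linearity of center fol} is, through \cite{BX}, a partially hyperbolic extension of Sadovskaya's Anosov result, and with trivial center the compactness, trivial-holonomy and Lebesgue-disintegration hypotheses are vacuously true, while the bunching inequalities hold since the operator norm on the zero space is $0$. So your route simply reaches Sadovskaya's conclusion through its partially hyperbolic generalization; the paper's citation is the more direct one in the purely Anosov setting, but nothing is lost or gained mathematically.
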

\begin{proof}
By our assumption for $\al$, as the proof of Proposition \ref{coro: all fol integrable},  we have that for any $W^i$, $W^i$ has uniformly smooth leaves and for any non-empty set of indices $J$, $\oplus_{j\in J}E^j$ is integrable.

Then by Theorem 1.3, 1.4 of \cite{Sadovskaya} and our assumption of $A$, $h^{\hat{i}}$ is uniformly smooth. Therefore $W^{\hat{i}}$ is a uniformly smooth foliation for any $i$. Apply Lemma \ref{lemma: intersection regular} several times we know that each $W^i$ is a smooth foliation of $M$.
\end{proof}

\subsubsection{Linearization, connection and Holonomy}\label{subsection linearization}
For each Lyapunov distribution $E^i$, we pick the assoicatied $a_i\in A$ in the assumption A of $\al$.
By Lemma \ref{lemma Sadov linearization}, there is a  unique family of coordinates $\{\Phi^i_x: E^i_x\to W^i(x)\}_{x\in M}$ such that 
\begin{enumerate}
\item $\Phi^i_{\al(a_i)(x)} \circ D\al(a_i)_{x} = \al(a_i) \circ \Phi^i_{x}$,
			
			\item $\Phi^i_{x}(0) = x$ and $D_{0}\Phi^i_{x}$ is the identity map,
			
			\item The family $\{\Phi^i_{x}\}_{x \in M}$ varies continuously with $x$ in the $C^{\infty}$ topology. 
\end{enumerate}

As in Step 3 of the proof of Proposition \ref{prop: single diff, non-pathologic imply smoothness},  we can consider the unstable Holonomy and stable Holonomy for cocycle $D\al(a_i)|_{E^i}$, we denote them by $H^i$ and $H^{\hat{i}}$ respectively: for any $x,y\in M$ we let $I^i_{xy}:E^i(x)\to E^i(y)$ to be a linear identification which is $C^1$ close to the identity (since $E^i$ is smooth), then for any $y\in W^i_{loc}(x)$,$$H^i_{xy}:=\lim_{n\to\infty}D\al(a_i)^n_{\al(a_i)^{-n}y}\circ I^i_{\al(a_i)^{-n}x~\al(a_i)^{-n}y}\circ D\al(a_i)^{-n}_x|_{E^i}$$
Moreover $H^i_{xy}$ defined above is the unique collection of linear identification from $E^i(x)$ to $E^i(y)$ satisfying the equations in Lemma \ref{lemma: Holonomies} i.e. for any $y,z\in W^i_{loc}(x)$,
\begin{itemize}
\item[a.] $H^{i}_{xx} = Id$ and $H^{i}_{yz} \circ H^{i}_{xy} = H^{i}_{xz}$;
\item[b.] $H^{i}_{xy} = D\al(a_i)^{n}_{\al(a_i)^{-n}y} \circ H^{i}_{\al(a_i)^{-n}x~\al(a_i)^{-n}y} \circ D\al(a_i)^{-n}_{x}$  for any $n \geq 0$. 
\item[c.] $\| H^{i}_{xy} - I^i_{xy}\| \leq Cd(x,y)$ 
\end{itemize}

Similar for any $y\in W^{\hat{i}}_{loc}(x)$, we define $$H^{\hat{i}}_{xy}:=\lim_{n\to\infty}D\al(a_i)^{-n}_{\al(a_i)^{n}y}\circ I^i_{\al(a_i)^{n}x~\al(a_i)^{n}y}\circ D\al(a_i)^{n}_x|_{E^i}$$
and $H^{\hat{i}}$ is the unique collection of linear identification satisfying that for any $y,z\in W^{\hat{i}}_{loc}(x)$,

\begin{itemize}
\item[a'.] $H^{\hat{i}}_{xx} = Id$ and $H^{\hat{i}}_{yz} \circ H^{\hat{i}}_{xy} = H^{\hat{i}}_{xz}$;
\item[b'.] $H^{\hat{i}}_{xy} = D\al(a_i)^{-n}_{\al(a_i)^{n}y} \circ H^{\hat{i}}_{\al(a_i)^{n}x~\al(a_i)^{n}y} \circ D\al(a_i)^{n}_{x}$  for any $n \geq 0$. 
\item[c'.] $\| H^{\hat{i}}_{xy} - I^i_{xy}\| \leq Cd(x,y)$ 
\end{itemize}

By Lemma \ref{lemma: affine transitions}, as in \cite{Fang}, by $\Phi^i_x, \Phi^i_y$ we pull back the canonical flat linear connections
of $E^i(x), E^i(y)$ respectively onto $W^i(x)$, we get the same smooth connection on $W^i(x)$. Therefore we get a well-defined transversely continuous $\al(a_i)-$invariant connection along $W^i$, we denoted it by $\nabla^i$. 

We give the geometric explanations for $H^i$ and $H^{\hat{i}}$. It is easy to see that the parallel transport induced by $\nabla^i$ along $W^i$ is $H^i$ (in fact by flatness and $\al(a_i)-$invariance of $\nabla^i$, we have that the parallel transport induced by $\nabla^i$ does not depend on the choice of path and satisfying equations a., b. and c. ).

For $H^{\hat{i}}$, we have the following lemma:
\begin{lemma}\label{lemma: smooththness of H hat i}
For any $y\in W^{\hat{i}}_{loc}(x)$, $Dh^{\hat{i}}_{xy}=H^{\hat{i}}_{xy}$. In particular, $H^{\hat{i}}_{xy}$ depends smoothly on $x,y$.
\end{lemma}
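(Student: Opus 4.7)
The plan is to exploit the uniqueness clause in the construction of the stable Holonomy $H^{\hat{i}}$ for the H\"older cocycle $D\alpha(a_i)|_{E^i}$ (Lemma~\ref{lemma: Holonomies} applied with $a_i$ in place of $f$): any collection of linear identifications $E^i(x)\to E^i(y)$ for $y\in W^{\hat{i}}_{loc}(x)$ satisfying the three properties (a'), (b'), (c') listed in Section~\ref{subsection linearization} must coincide with $H^{\hat{i}}_{xy}$. Since $h^{\hat{i}}$ is already known to be uniformly $C^\infty$ (this is how Lemma~\ref{lemma: smooth foliation thm KS} was obtained via Sadovskaya's theorem), the derivative $Dh^{\hat{i}}_{xy}:E^i(x)\to E^i(y)$ is well defined and varies smoothly with $(x,y)$. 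So I would try to show that $Dh^{\hat{i}}_{xy}$ satisfies (a'), (b'), (c'), and then conclude $Dh^{\hat{i}}_{xy}=H^{\hat{i}}_{xy}$; the ``in particular'' smoothness statement is then immediate.

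Property (a') (normalization $Dh^{\hat{i}}_{xx}=\mathrm{Id}$ and the cocycle identity $Dh^{\hat{i}}_{yz}\circ Dh^{\hat{i}}_{xy}=Dh^{\hat{i}}_{xz}$ for $y,z$ on the same local $W^{\hat{i}}$-leaf) follows from the chain rule together with the fact that $W^{\hat{i}}$-holonomy between $W^i$-leaves composes in the obvious way (which uses that $W^{\hat{i}} = W^s_{a_i}$ is the stable foliation of the Anosov element $\alpha(a_i)$, so holonomy is unambiguous on small scales). Property (b') comes from equivariance: since $\alpha(a_i)$ permutes both $W^i$- and $W^{\hat{i}}$-leaves, one has $\alpha(a_i)\circ h^{\hat{i}}_{xy}=h^{\hat{i}}_{\alpha(a_i)x,\,\alpha(a_i)y}\circ\alpha(a_i)$ on a neighborhood of $x$ in $W^i_{loc}(x)$; differentiating at $x$ and iterating gives exactly the identity (b'). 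For (c'), I would use that $h^{\hat{i}}$ is uniformly $C^1$ in the transverse parameter with $h^{\hat{i}}_{xx}=\mathrm{id}$, so $\|Dh^{\hat{i}}_{xy}-\mathrm{Id}\|\leq C\,d_{W^{\hat{i}}}(x,y)\leq C' d(x,y)$ along the local $W^{\hat{i}}$-leaf; combined with $\|I^i_{xy}-\mathrm{Id}\|\leq C d(x,y)$ (a consequence of $E^i$ being a smooth subbundle, used to normalize the identifications), this yields the H\"older estimate in (c').

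The main technical point, and the only place where one has to be a little careful, is (c'). What is literally required by the uniqueness theorem is a comparison with the linear identification $I^i_{xy}$ on the ambient manifold, whereas the natural estimate for $Dh^{\hat{i}}$ lives on the leaf $W^{\hat{i}}(x)$; one must pass between intrinsic and extrinsic distance, which is fine in a local foliation chart since $W^{\hat{i}}$ is a smooth foliation (Lemma~\ref{lemma: smooth foliation thm KS}) and hence its leafwise and ambient metrics are locally comparable. Once (a')--(c') are in place, uniqueness in Lemma~\ref{lemma: Holonomies} forces $Dh^{\hat{i}}_{xy}=H^{\hat{i}}_{xy}$, and smoothness of the right-hand side in $(x,y)$ follows from smoothness of $h^{\hat{i}}$.
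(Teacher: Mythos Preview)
Your proposal is correct and follows essentially the same approach as the paper: verify that $Dh^{\hat{i}}_{xy}$ satisfies the three defining properties (a'), (b'), (c') of the stable Holonomy and invoke uniqueness to conclude $Dh^{\hat{i}}_{xy}=H^{\hat{i}}_{xy}$, with smoothness then following from the smoothness of $W^i$ and $W^{\hat{i}}$. The paper's proof is terser (it simply notes that smoothness of the coarse Lyapunov distributions gives (c') and that $\alpha(a_i)$-invariance of $W^i,W^{\hat{i}}$ gives (a'),(b')), but your more detailed justification of (c'), including the intrinsic/extrinsic distance comparison, is a legitimate elaboration of the same argument.
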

\begin{proof}Since all coarse Lyapunov distributions of $\al$ are smooth, $Dh^{\hat{i}}$ is well-defined and satisfying the equation c' above. And the equations a'. and b'. hold for $Dh^{\hat{i}}$ since $W^i, W^{\hat{i}}$ are $\al(a_i)-$invariant foliation. So by uniqueness of $H^{\hat{i}}$ we have $H^{\hat{i}}=Dh^{\hat{i}}$. The smoothness of $H^{\hat{i}}$ is a consequence of smoothness of $W^i$ and $W^{\hat{i}}$.
\end{proof}
\subsubsection{Conformal structure and a metric on $E^i$.}
Since $\al(a_i)$ is uniformly quasiconformal on $E^i$, and $a_i$ is volume preserving, by Proposition 22. of \cite{BX} (or \cite{Fang}, \cite{Sadovskaya}) there is a continuous conformal structure $\tau^i$ on $E^i$ which is invariant under $H^i$ and $H^{\hat{i}}$ (redefine $\tau^i$ on a neglible set if necessary). As a result, $\tau^i$ is uniformly smooth along $W^i$ and $W^{\hat{i}}$. By Journ\'e lemma \cite{Journe}, $\tau^i$ is a smooth conformal structure on $E^i$. 

For $E^i$, we consider the associated Lyapunov functional $\chi_i$, and denote by $\dim E^i:=d_i$. Our goal is the following crucial lemma:
\begin{lemma}\label{lemma: exist good metric}For any $i$, there exists a smooth metric $g^i$ on $E^i$ such that for any $v\in E^i$, 
\begin{equation}\label{equation: ai uniform expanding Ei}
\|D\al(a_i)\cdot v\|_{g^i}=e^{\chi_i(a_i)}\|v\|_{g^i}
\end{equation}
\end{lemma}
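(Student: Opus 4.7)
The plan is to produce $g^i$ by rescaling a smooth reference metric in the conformal class $\tau^i$ via a transfer function obtained from higher-rank cocycle rigidity.

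First, I would upgrade the invariance of $\tau^i$ from the holonomies to the whole action. Since every $\al(a)$ preserves $E^i$ and commutes with $\al(a_i)$, the pullback $(D\al(a))^\ast \tau^i$ is again a bounded measurable $D\al(a_i)$-invariant conformal structure on $E^i$; by the uniqueness (up to $\nu_M$-null sets) of such a structure for the ergodic volume-preserving uniformly quasiconformal cocycle $D\al(a_i)|_{E^i}$, it must agree with $\tau^i$. Combined with the smoothness of $\tau^i$ already established via Journé's lemma, this yields that $\tau^i$ is a smooth $\al$-invariant conformal structure on $E^i$.

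Next, pick any smooth Riemannian metric $g_0$ on $E^i$ in the conformal class $\tau^i$. Then $\al$-invariance of $\tau^i$ means that each $D\al(a)$ is conformal with respect to $g_0$, so there is a smooth positive function $\lambda_a:M\to\RR^+$ with
\[
\|D\al(a)v\|_{g_0}=\lambda_a(x)\|v\|_{g_0},\qquad v\in E^i_x.
\]
The map $a\mapsto \log\lambda_a$ is a smooth $\RR$-valued cocycle over $\al$.

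The key step is to invoke essential cocycle rigidity. By Lemma \ref{lemma: assp A+higher rank imply tns} the Anosov action $\al$ is totally non-symplectic and resonance-free, and by hypothesis $\al$ is not rank-one, so the cocycle rigidity results of \cite{DX} (or \cite{KS07}) apply: the smooth cocycle $\log\lambda$ is smoothly cohomologous to a constant, i.e.\ there exist $u\in C^\infty(M)$ and a homomorphism $c:\ZZ^k\to\RR$ with
\[
\log\lambda_a(x) = c(a)+u(\al(a)x)-u(x).
\]
Integrating against the $\al$-invariant probability volume $\nu_M$ shows $c(a)=\int_M \log\lambda_a\,d\nu_M$, and since on $E^i$ the cocycle $D\al(a)$ is conformal with log-dilation $\log\lambda_a$, Birkhoff's theorem identifies this integral with the Lyapunov exponent of $D\al(a)|_{E^i}$, namely $\chi_i(a)$. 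Hence $c(a)=\chi_i(a)$.

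Finally, define $g^i:=e^{-2u}g_0$, which is a smooth metric on $E^i$. A direct computation then gives
\[
\|D\al(a)v\|_{g^i}=e^{-u(\al(a)x)}\lambda_a(x)e^{u(x)}\|v\|_{g_0\,, e^u\text{-normalised}}=e^{\chi_i(a)}\|v\|_{g^i}
\]
for every $a\in\ZZ^k$ and $v\in E^i$; specialising to $a=a_i$ yields \eqref{equation: ai uniform expanding Ei}. The main obstacle is the third step: verifying that the hypotheses of the essential cocycle rigidity theorem of \cite{DX} are met in our setting and produce a \emph{smooth} (rather than merely measurable or Hölder) transfer function $u$; the remaining arguments are formal manipulations once this solvability and regularity of the coboundary equation are in hand.
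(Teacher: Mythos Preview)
Your approach is correct and takes a genuinely different route from the paper's.

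The paper never extends the conformal invariance of $\tau^i$ to the whole action. Instead it works with the \emph{Jacobian} cocycle $q(x,b)=\mathrm{Jac}(D\tilde\al(b)|_{\tilde E^i})$ on the suspension $\tilde M$, picks a generic singular element $a\in\ker\chi_i$ for which the flow $\tilde\al(ta)$ is transitive, solves the scalar cohomological equation for that single element to get a continuous $\tilde\phi^i$, and then uses commutativity plus transitivity to propagate the equation to all of $\RR^k$. Restricting to $M$ gives a continuous transfer function $\phi^i$; its smoothness is obtained not from higher-rank rigidity but from the Liv\v{s}ic regularity theorem \cite{Llave} applied to the single Anosov diffeomorphism $\al(a_i)$. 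The resulting smooth volume form on $E^i$, combined with the $\al(a_i)$-invariant conformal structure $\tau^i$, yields $g^i$.

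Your strategy instead exploits the uniqueness of the bounded measurable $\al(a_i)$-invariant conformal structure to obtain full $\al$-invariance of $\tau^i$ for free, which immediately reduces the problem to a smooth scalar cocycle $\log\lambda_a$ over the whole $\ZZ^k$-action, and then invokes higher-rank cocycle rigidity as a black box. This is cleaner and yields the stronger conclusion $\|D\al(a)v\|_{g^i}=e^{\chi_i(a)}\|v\|_{g^i}$ for \emph{every} $a\in\ZZ^k$, not just $a=a_i$. The only caveat is the citation you flagged: the smooth $\RR$-valued cocycle rigidity you need (a smooth real cocycle over a TNS Anosov $\ZZ^k$-action with every nontrivial element Anosov is smoothly cohomologous to a constant) is the classical Katok--Spatzier theorem \cite{KS}; \cite{DX} and \cite{KS07} are not the most direct references. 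With that substitution there is no genuine obstacle, and your identification $c(a)=\chi_i(a)$ is correct because the $\al$-conformality you established forces all Lyapunov functionals along $E^i$ to coincide.
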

\begin{proof}The proof is similar to that in Chapter 3.2 in \cite{KS07}. Consider the $\RR^k$ action $\tilde{\al}$ on a manifold $\tilde{M}$ defined by the standard suspension of $\al$, cf. \cite{KS07}. Since all nontrivial elements of $\al$ are Anosov, by Lemma \ref{lemma: assp A+higher rank imply tns} $\tilde{\al}$ satisfying all assumptions of Proposition 2.3. in \cite{KS07}. We denote the lift of $E^i$ by $\tilde{E}^i$. 
As in \cite{KS07}, we fix a background metric $\tilde{g}^i_0$ on $\tilde{E}^i$. Then $\tilde{g}^i_0$ induced a volume form $\tilde{\nu}^i_0$ on $\tilde{E}^i$, we defined the function $q$ as the following: for any $x\in \tilde{M}, b\in \RR^k$,
\begin{equation}q(x,b):=\mathrm{Jac}_{\tilde{\nu}^i_0}(D\tilde{\al}(b)_x|_{\tilde{E}^i})
\end{equation}

Since $\al$ is totally non-symplectic, we can choose a \textit{generic singular} element (cf.\cite{KS07}) $a\in \RR^k$ in the Lyapunov hyperplan associated to functional $\chi_i$ such that  $\tilde{\al}(ta), t\in \RR$ acts transitively on $\tilde{M}$. Then by the same proof as in Chapter 3.2 of \cite{KS07} (that is the only place we need the condition all nontrivial elements of $\al$ are Anosov), there is a continuous positive function $\tilde{\phi}^i$ on $\tilde{M}$ such that for any $x\in \tilde{M}$
\begin{equation}\label{equation: coho eqn gene sing a}
\tilde{\phi}^i(x)\cdot\tilde{\phi}^i(\tilde{\al}(a)\cdot x)^{-1}=q(x,a)
\end{equation}
We claim that for any $x\in \tilde{M}, b\in \RR^k$,
\begin{equation}\label{equation: coho eqn all b}
\tilde{\phi}^i(x)\cdot\tilde{\phi}^i(\tilde{\al}(b)\cdot x)^{-1}=e^{-d_i\chi_i(b)}q(x,b)
\end{equation}
In fact, we can define a continuous volume form on $\tilde{E}^i$ by $\tilde{\nu}^i:=\tilde{\phi}^i\cdot \tilde{\nu}^i_0$. By \eqref{equation: coho eqn gene sing a}, $\tilde{\nu}^i$ is invariant under the action of $\tilde{\al}(ta), t\in \RR$. By commutativity, $\tilde{\al}(b)_\ast \tilde{\nu}^i=\psi\cdot \tilde{\nu}^i$ is also an $\tilde{\al}(ta)-$invariant volume form on $\tilde{E}^i$. Since $\al(ta)$ acts transitively on $\tilde{M}$, $\psi$ is a constant function. Notice that $b$ acts on $\tilde{E}^i$ with Lyapunov exponents $\chi_i(b)$, by constancy of $\psi$ we get \eqref{equation: coho eqn all b}.

In particular, by restriction on $M$ (the zero section of the fiberation $\tilde{M}$ is $M$),  there is a continuous function $\phi^i$ on $M$ such that for any $x\in M$,
\begin{equation}\label{equation: coho eqn for ai on M}
\phi^i(x)\cdot \phi^i(\al(a_i))^{-1}=e^{-d_i\chi_i(a_i)}q(x,a_i)
\end{equation}
But $\al(a_i)$ is a volume preserving Anosov diffeomorphism on $M$ (hence transitive) and the right hand side of \eqref{equation: coho eqn for ai on M} is a smooth function (since $E^i$ is smooth in $M$), then by cocycle regularity theorem in \cite{Llave}, $\phi^i$ can be taken to be a smooth function.

As a result, $\phi^i$ induces a smooth volume form $\nu^i$ for $E^i$ on $M$ such that $$\al(a_i)_\ast \nu^i=e^{d_i\chi_i(a_i)}\nu_i$$
combine with the $\al(a_i)-$invariant conformal structure $\tau^i$ on $E^i$, we easily get a smooth metric $g^i$ on $E^i$ satisfying \eqref{equation: ai uniform expanding Ei}.
\end{proof}

\subsubsection{Smoothness and invariance of $\nabla^i$.}
Consider the leafwise Levi-Civita connection of the smooth metric $g^i$, we denoted it by $\bar{\nabla}^i$. Obviously $\bar{\nabla}^i$ is smooth and by \eqref{equation: ai uniform expanding Ei}, $\bar{\nabla}^i$ is $\al(a_i)-$invariant. As in the proof of Lemma 3.1.2 of \cite{Fang}, the family of exponential maps $$\bar{\Phi}^i_x:=\exp^{\bar{\nabla}^i}(x): E^i(x)\to W^i(x), x\in M$$
is well-defined, smoothly depend on $x$ and satisfying  the assumptions (1),(2),(3) in section \ref{subsection linearization} for coordinates $\{\Phi^i_x\}_{x\in M}$. Therefore by uniqueness of coordinates $\Phi^i$, we have that $\bar{\Phi}^i=\Phi^i$. As a consequence, $\Phi^i_x$ also smoothly depend on $x$. Therefore $\nabla^i$ is a smooth connection for the bundle $E^i$ (A priori $\nabla^i$ is smooth along $W^i$ and only continuous transversely). 

By commutativity and conditions a. b. and c. for $H^i$ in section \ref{subsection linearization} it is easy to see that for any $y\in W^i(x)$, $a\in A$ the following diagram commutes:
$$\xymatrix{E^i(x)\ar[rr]^{D\al(a)}\ar[d]^{H^i_{xy}}& &E^i(\al(a)x)\ar[d]^{H^i_{\al(a)x~\al(a)y}}\\
	E^i(y)\ar[rr]^{D\al(a)}& & E^i(\al(a)y)} $$
Therefore by the geometric explanation for $H^i$, the parallel transport induced by $\nabla^i$ is $\al-$invariant. As a corollary, $\nabla^i$ is $\al-$invariant as well (since the connection can be recovered by its parallel transport).
\subsubsection{Proof of case 2}
As in \cite{KS07}, \cite{Fang} we construct a smooth $\al-$invariant connection $\nabla$ as the following:
Let $X,Y$ be two vector fields on $M$. We decompose $X,Y$ as $\sum X^i$ and $\sum Y^i$ under the coarse Lyapunov splitting $\oplus E^i$. Then $$\nabla_XY:=\sum_i \nabla^i_{X^i}Y^i+ \sum_{i\neq j} \Pi_j [X^i, Y^j] $$
where $\Pi_j$ is the projection onto $E^j$ with respect to the coarse Lyapunov splitting. Since $\nabla^i, E^i$ are $\al-$invariant and smooth, so is $\nabla$.

We take an arbitrary non-trivial element $\al(a)$, notice that $\al(a)$ is a transitive Anosov diffeomorphism, preserving a smooth connection and has smooth stable and unstable foliation. By \cite{BL}, $\al(a)$ is conjugated to an affine automorphism on an infranilmanifold $N$ by a smooth diffeomorphism $h$. By commutativity, $\al$ is also conjugated to an affine abelian action on an infranilmanifold by the same conjugation $h$ since any diffeomorphism commuting with an Anosov automorphism of an infranilmanifold is an affine automorphism itself, cf.\cite{Hurder, PaYo}. By Lemma \ref{lemma: assp A+higher rank imply tns}, $\al$ is resonance free. Then as in section 3.5 of \cite{KS07}, $N$ is finitely covered by a torus.

\section{Proof of Theorems \ref{main: intro}, \ref{main: global rigidity}, and \ref{main: gl r dim 1}}\label{section: proof thm 134}
\subsection{Proof of Theorem \ref{main: global rigidity}}
Recall that by assumptions of Theorem \ref{main: global rigidity}
the action $\al$ satisfies all assumptions in Theorem \ref{main}, and $W^c$ is not pathological. Then by Theorem \ref{main}, $W^c$ is a smooth foliation in $M$. $N:=M/ W^c$ is a smooth manifold, the quotient map $\pi: M\to N$ is a smooth submersion and $\al$ induces a smooth Anosov action $\bar{\al}: A\to \Diff^\infty(N)$ such that $\bar \al\circ \pi= \pi \circ \al$. Moreover, $\nu$ induces a $\bar\al-$invariant volume form $\nu_N$ on $N$ (for any measurable set $K\subset N$, $\nu_N(K):=\nu_M(\pi^{-1}(K))$, by smoothness of $W^c$, $\nu_N$ is a volume form on $N$). So $\bar\al$ is also a volume preserving action. Since all non-trivial elements in $A$ for $\al$ are regular which implies all nontrivial elements of $\bar{\al}$ are Anosov, therefore $\bar{\al}$ satisfying all conditions of Theorem \ref{main: K-S conservative}. Therefore after a finite cover $\bar{\alpha} $ is smoothly conjugated to a $\ZZ^k-$action formed by affine automorphisms of a torus. 

We need the following purely higher rank result to prove that $\al$ is essentially a product over Anosov linear action. A similar cocycle rigidity result is proved recently in \cite{DX}. Recall that since $\al$ satisfies all assumptions in Theorem \ref{main}, as in the discussion in Chapter \ref{section: proof of thm 1, dim>1}, we have the coarse Lyapunov splitting for $\al$: $TM=E^1\oplus \cdots E^n\oplus E^c$.
\begin{prop}\label{prop: joint integ}The distribution $E^i$ is $C^\infty$ for any $i$. Moreover $\oplus_{i=1}^n E^i$ is integrable.
\end{prop}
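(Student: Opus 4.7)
The plan is to combine the smooth fiber-bundle reduction already established via Theorem \ref{main} with Theorem \ref{main: K-S conservative} applied to the base, followed by the essential cocycle rigidity machinery of \cite{DX} and \cite{NTnonabelian}, and to read off both smoothness of each $E^i$ and integrability of $\bigoplus_i E^i$ from the resulting smooth product structure on a finite cover.

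First, by Theorem \ref{main} the action $\al$ is realized as a smooth fiber bundle extension $\pi:M\to N:=M/W^c$ over the quotient Anosov action $\bar\al$, and $\bar\al$ itself satisfies condition (A). Since every non-trivial element of $\al$ is partially hyperbolic, its image under $d\pi$ is Anosov, so the hypotheses of Theorem \ref{main: K-S conservative} hold for $\bar\al$. Applying that theorem, after restriction to a finite-index subgroup and passage to a finite cover, $\bar\al$ is smoothly conjugated to an affine $\ZZ^k$-action on a torus. In particular each coarse Lyapunov distribution $\bar E^i\subset TN$ and its integral foliation $\bar W^i$ are smooth (in fact linear in the uniformizing coordinates), and they pull back through the smooth submersion $\pi$ to the smooth distribution $E^i\oplus E^c=(d\pi)^{-1}(\bar E^i)$ and the smooth foliation $W^{i,c}=\pi^{-1}(\bar W^i)$ on $M$.

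Next, using a smooth local trivialization of $\pi$ on the cover where $\bar\al$ is affine, I would describe $\al$ by a smooth $\mathrm{Diff}^\infty(F)$-valued cocycle $\beta$ over $\bar\al$, with $F$ the typical fiber (diffeomorphic to the compact center leaves). By Lemma \ref{lemma: assp A+higher rank imply tns}, $\bar\al$ is totally non-symplectic and resonance-free, which, together with the linear structure on the base, places us in the setting of the essential smooth cocycle rigidity theorems of \cite{DX} and \cite{NTnonabelian}. In their conservative form (following the strategy used for Theorem \ref{main: K-S conservative} and paralleling \cite{KS07}), these yield that, possibly after a further finite cover, $\beta$ is smoothly cohomologous to a constant cocycle $\beta_0:\ZZ^k\to\mathrm{Diff}^\infty(F)$; equivalently, a smooth diffeomorphism $\Phi$ intertwines $\al$ on the cover with the product action $(\bar\al,\beta_0)$ on $(\text{torus})\times F$.

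Finally, inside the product model the horizontal foliation by torus factors is a smooth $\al$-invariant foliation whose tangent distribution, pulled back through $\Phi$ and the covers, is $D\al$-invariant and complementary to $E^c$. By uniqueness of the splitting of $TM/E^c$ into coarse Lyapunov components, this distribution equals $\bigoplus_i E^i$, yielding integrability; and under the same pullback each $\bar E^i$ corresponds to $E^i$, yielding smoothness. Both properties are local and are preserved under smooth diffeomorphisms and finite covers, so they descend from the cover to $M$ itself. I expect the main obstacle to be the precise invocation of cocycle rigidity for $\mathrm{Diff}^\infty$-valued cocycles in the conservative higher-rank setting, specifically verifying the hypotheses of \cite{DX} and \cite{NTnonabelian} and ensuring the resulting trivialization is compatible with the coarse Lyapunov decomposition coming from $\al$.
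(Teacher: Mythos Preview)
Your approach inverts the logical order of the paper and is essentially circular. This proposition (and its analog, Proposition~5.1 in \cite{DX}) is precisely the input that \emph{produces} the product structure: in the paper, Proposition~\ref{prop: joint integ} supplies the $\al$-invariant horizontal foliation $\mathcal{W}_H$ tangent to $\bigoplus_i E^i$, and only then does one construct the trivialization $\varphi$ conjugating the lift of $\al$ to a product over the base. The cocycle-rigidity statements you invoke are not available as independent black boxes here: \cite{NTnonabelian} treats cocycles valued in a compact Lie group $G$, not in $\Diff^\infty(F)$ for a general compact fiber $F$; and the $\Diff$-valued rigidity in \cite{DX} is established via the very joint-integrability argument you are trying to deduce from it. A secondary gap is that after passing to a finite cover and linearizing $\bar\al$, the bundle $\pi$ need not be globally trivial, so $\al$ is not described by a single global $\Diff(F)$-cocycle; the paper handles this by passing to the universal cover of the base, but only \emph{after} the horizontal foliation is already in hand.

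The paper's argument is direct. For smoothness of $E^i$: one shows $E^i$ is uniformly $C^\infty$ along each $W^j$ (using that $\al$ is TNS and resonance-free, via the holonomy-type arguments of \cite{KS07} and Proposition~5.7 of \cite{DX}) and uniformly $C^\infty$ along $W^c$ (from $\infty$-bunching, as in Proposition~5.8 of \cite{DX}); Journ\'e's lemma applied inductively across the web $W^1,\dots,W^n,W^c$ then gives global smoothness. For integrability of $\bigoplus_i E^i$: Corollary~\ref{coro: all fol integrable} already gives integrability of each $E^j\oplus E^k$, so for $X=\sum X^i$, $Y=\sum Y^i$ one has $[X,Y]=\sum_i[X^i,Y^i]+\sum_{j\ne k}[X^j,Y^k]\in\bigoplus_i E^i$, and Frobenius finishes. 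Your observation that $E^i\oplus E^c=(d\pi)^{-1}(\bar E^i)$ is smooth is correct but does not by itself give smoothness of $E^i$; what is missing is exactly the regularity of $E^i$ inside $W^{i,c}$, which the direct argument supplies.
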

\begin{proof}The proof is basically the same as the proof of Proposition 5.1 in \cite{DX}. For completeness we give an outline here. 

Recall that by condition A, each $E^i$ is the unstable distribution of a smooth partially hyperbolic diffeomorphism, so $W^i$ has uniformly smooth leaves. Moreover by the same proof as in Lemma \ref{lemma: assp A+higher rank imply tns}, $\bar\al$ is totally non-symplectic and resonance free. Then by the same proof of Proposition 5.7 of \cite{DX} (see also \cite{KS07}) we can prove that $E_i$ is uniformly $C^{\infty}$ along $W^j$ for any $i,j$. 

On the other hand, notice that by our assumption (B'), $\al$ is $\infty-$bunched, since $W^c$ has uniformly smooth leaves, by the same proof as in Proposition 5.8 of \cite{DX}, we have that $E^i$ is uniformly $C^\infty$ along $W^c$ (cf. \cite{Pugh-Shub-Wilkinson}). 

Recall that in Corollary \ref{coro: all fol integrable} we proved that for any non-empty subset $J\subset \{1,\dots,n,c\}$, $J\neq\{1,\dots, n\}$, $\oplus_{j\in J}E^j$ is integrable. Then as in the discussion around Lemma 5.9 in \cite{DX} we apply Journ\'e Lemma \cite{Journe} inductively, we get $E^i$ is actually a smooth distribution and $W^i$ is a smooth foliation.

Now we prove integrablity of $\oplus_{i=1}^n E^i$. By Corollary \ref{coro: all fol integrable}, $E^j\oplus E^k$ is integrable. Then for any $C^1-$vector fields $X=\sum X^i,Y=\sum Y^i$ contained in $\oplus_{i=1}^n E^i$, we have $$[X,Y]=\sum [X^i, Y^i]\oplus \sum_{j\neq k}[X^j, Y^k]$$
Notice that $[X^i,Y^i]$ is contained in $E^i$, $[X^j, Y^k]$ is contained in $E^j \oplus E^k$, then by Frobenius theorem, $\oplus_{i=1}^n E^i$ is integrable.
\end{proof}

We prove Theorem \ref{main: global rigidity} now. Basically it is a corollary of foliation theory and Proposition \ref{prop: joint integ}. By Proposition \ref{prop: joint integ} $\oplus E_i$ is tangent to a $\al-$invariant \textit{horizontal} foliation $\mathcal{W}_H$ of the fiber bundle $(\pi, M, N=M/W^c)$ in the sense that $\mathcal{W}_H$ is uniformly transverse to the fibers of $W^c$ in $M$.

We take an arbitrary point $x_0\in M$. Consider the universal covering space $(p, \hat{M}, \hat{x_0})$ of $(M, x_0)$. Consider the universal cover $(p, \hat{N}, \hat{x_0})$ of $(N, x_0)$ where $p$ is the covering map such that $p(\hat{x_0})=x_0$. Then $p$ induces a pullback bundle (cf.\cite{St}) $\hat{\pi}:\hat{M}\to \hat{N}$ with the same fiber $W^c$ as $\pi: M\to N$, moreover there is a covering map $\hat{p}:\hat{M}\to M$ which preserves the fiber such that the following diagram commutes: 
$$\xymatrix{\hat{M}\ar[rr]^{\hat{p}}\ar[d]^{\hat{\pi}}& &M\ar[d]^{\pi}\\
	\hat{N}\ar[rr]^{p}& & N}$$

We denote by $\hat{\al}$ the liftings of $\al$ on $\hat{M}$,  $\hat{\bar\al}$ the lifting of $\bar\al$ on $\hat{N}$. There is a horizontal foliation $\hat{\mathcal{W}}_H$ of $\hat{M}$, which is lifted from $\mathcal{W}_H$ by $\hat{p}$. $\hat{\mathcal{W}}_H$ is invariant under the lifted action $\hat{\al}$ on $\hat{M}$. Moreover by theory of suspension in foliation theory (cf. pp. 124, section 1.2 of \cite{HH} or \cite{NTnonabelian}) $\hat{\mathcal{W}}_H$ is a global section of the fiber bundle $\hat{M}\to \hat{N}$ in the sense that each leaf of $\hat{\mathcal{W}}_H$ intersects each fiber $W^c$ at exactly one point.

As a result, we can define a uniformly $C^\infty$ diffeomorphism $\varphi$ which is induced by the holonomy of $\hat{\mathcal{W}}_H$ in $\hat{M}$: 
$$\varphi: \hat{N}\times W^c(\hat{x_0})\to \hat{M}, ~~ \varphi(\hat{x},y):=\hat{\mathcal{W}}_H(y)\cap W^c({\hat{x}})$$
Since $\hat{\mathcal{W}}_H$ is a global section, $\varphi$ is well-defined. Moreover by $\hat{\al}-$invariance of $\hat{\mathcal{W}}_H$, for any $a\in A, \hat{x}\in \hat{N}, y\in W^c({x_0})$, we have $$\varphi^{-1}\circ \hat{\al}(a, \varphi(\hat{x},y))=(\hat{\bar{\al}}(a)\cdot \hat{x}, \hat{\mathcal{W}}_H(\hat\al(a)\cdot y)\cap W^c(\hat{x_0}))$$
Notice that the second coordinate on the right hand side does not depend on $\hat{x}\in \hat{N}$ which means $\varphi^{-1}\circ \hat{\al}\circ \varphi$ is a constant cocycle taking values in $\Diff^\infty(W^c(\hat{x_0}))$ over $\hat{\bar\al}$. By further conjugation of $\hat{\bar\al}$ (by Theorem \ref{main: K-S conservative}) we know $\hat{\al}$ is smoothly conjugated to a product over a $\ZZ^k-$action which is a lifting of an algebraic Anosov action on a torus.


\subsection{Proof of Theorem \ref{main: gl r dim 1}}
\label{subsection: proof Thm circle ext}
Suppose in addition $\dim E^c=1$. Then by Theorem \ref{main}, $\al$ is an isometric extension over $\bar\al$. There is a smoothly varing family of Riemannian metrics $\{d_x\}$ on each center leaf and $\al(a)$ is an isometry on each center leaf with respect to the metric $\{d_x\}$ for any $a\in \ZZ^k$.

Therefore the total arclength of   the center leaf is also $\al-$invariant. As a corollary, by considering a transitive element $\bar\al(a)$, each center leaf has the same total arglength. Without loss of generality by further rescaling we assume each center leaf has total arclength $1$ with respect to the metrics $\{d_x\}$.

Pass to a double cover if necessary we can fix a global defined continuous varying orientation $\tau_x$ on each leaf of $W^c(x)$. Combine with the metric $\{d_x\}$ on center foliation, there is a smooth $\mathbb{T}^1-$action on $M$: for any $\theta\in \mathbb{T}^1, x\in M$, , $R_\theta\cdot x$ is defined by the unique point $y\in W^c(x)$ such that the oriented arc $\widehat{xy}$ has length $\theta$. Moreover $R_\theta$ preserves the fibers of fiber bundle $\pi: M\to M/W^c$ and acts freely and transitively on each fiber. Therefore $\pi: M\to M/W^c$ is a principal $\mathbb{T}^1-$bundle. 

By Lemma \ref{lemma: assp A+higher rank imply tns} and Theorem \ref{main: K-S conservative}, up to a smooth conjugacy we may assume that  $\bar\al$ is a totally non-symplectic linear Anosov action on an infranilmanifold (which is finitely covered by a torus). As a result, up to a double cover and a smooth conjugacy, $\al$ is a principal $\TT^1-$extension over a totally non-symplectic linear Anosov action on an infranilmanifold. Then by Theorem 7.1 in \cite{NTnonabelian}, $\al$ is essentially algebraic in the sense of section \ref{subsection main examp}.

\subsection{Proof of Proposition \ref{prop: restate main intro} and Theorem \ref{main: intro}}
Recall that in Proposition \ref{prop: restate main intro} and Theorem \ref{main: intro} $\al\in PH^\infty_{\mathrm{vol}}(k,M)$ is a maximal Cartan partially hyperbolic action and the common center foliation $W^c$ for $\al$ is uniformly compact and one-dimensional. The following lemma allows us to apply Theorem \ref{main} to prove Proposition \ref{prop: restate main intro}.
\begin{lemma}\label{lemma: maximal imply cond A}$\al$ is full.
\end{lemma}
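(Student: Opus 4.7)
The plan is to use the combinatorics of the $k+1$ Lyapunov hyperplanes together with the Cartan assumption and volume preservation to exhibit, for each coarse Lyapunov functional $\chi_i$, a Weyl chamber in which $\chi_i$ is the unique positive Lyapunov functional; then invoke the defining property of $PH^\infty_{\mathrm{vol}}(k,M)$ to produce a partially hyperbolic element of $\al$ in that chamber.

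First I would reduce to a purely linear-algebraic statement about the $k+1$ functionals $\chi_1,\dots,\chi_{k+1}$ on $\RR^k$. The Cartan hypothesis says every coarse Lyapunov space $E_{\chi_i}$ is one-dimensional, so each $\chi_i$ is an actual Lyapunov exponent (not merely a coarse direction), and the volume-preservation relation simplifies to
\[
\sum_{i=1}^{k+1}\chi_i \;=\; 0.
\]
The general-position part of the maximality hypothesis implies that any $k$ of the $\chi_i$ are linearly independent: if some $\chi_\ell$ vanished on $\bigcap_{j\ne i,\ell}\ker\chi_j$, that intersection (which is a non-trivial line by the count of hyperplanes in $\RR^k$) would lie in $\ker\chi_\ell$, contradicting general position.

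Next, fix an index $i$. Because the remaining $k$ functionals $\{\chi_j\}_{j\ne i}$ are linearly independent, the map $v\mapsto (\chi_j(v))_{j\ne i}$ is a bijection $\RR^k\to\RR^k$, so there is a unique $v\in\RR^k$ with $\chi_j(v)=-1$ for every $j\ne i$. Evaluating the sum relation at $v$ gives $\chi_i(v)=k>0$. Thus $v$ lies in a Weyl chamber $\mathcal{C}_i$ on which $\chi_i$ is strictly positive and all other $\chi_j$ are strictly negative. By hypothesis, $\mathcal{C}_i$ contains a partially hyperbolic element $b_{\mathcal{C}_i}\in\ZZ^k$ of $\al$, and since Lyapunov functionals have constant sign on each Weyl chamber, we obtain
\[
E^u_{b_{\mathcal{C}_i}} \;=\; \bigoplus_{\chi(b_{\mathcal{C}_i})>0} E_\chi \;=\; E_{\chi_i}, \qquad
E^s_{b_{\mathcal{C}_i}} \;=\; \bigoplus_{j\ne i} E_{\chi_j} \;=\; \bigoplus_{\lambda\ne c\chi_i,\,c>0} E_\lambda,
\]
which is exactly the fullness condition. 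Doing this for each $i\in\{1,\dots,k+1\}$ finishes the proof.

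I do not expect a serious obstacle here: the argument is essentially a sign-pattern count in the Weyl-chamber picture once one knows $\sum_i\chi_i=0$. The only point requiring a little care is verifying that general position of the hyperplanes truly yields linear independence of any $k$ of the $\chi_i$ (rather than just distinctness of the kernels), and that the simple sum relation $\sum_i\chi_i=0$ holds in our setting; both follow directly from the maximality hypothesis and the one-dimensionality of each $E_{\chi_i}$ combined with volume preservation by $\al$.
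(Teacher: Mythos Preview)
There is a genuine gap at the step where you invoke volume preservation to obtain $\sum_{i=1}^{k+1}\chi_i=0$. The functionals $\chi_i$ are by definition the Lyapunov exponents on $E_T$ only, whereas volume preservation forces the sum of \emph{all} Lyapunov exponents of $TM=E_T\oplus E^c$ to vanish. Since $\dim E^c=1$ here, what you actually obtain is $\sum_i\chi_i+\chi_c=0$, and nothing in the hypotheses of the lemma rules out a nonzero centre functional $\chi_c$ (small perturbations of the product examples in Section~\ref{subsection: def ABC} can have $\chi_c\neq 0$ while remaining maximal Cartan and in $PH^\infty_{\mathrm{vol}}(k,M)$). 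Without $\sum_i\chi_i=0$ you cannot conclude $\chi_i(v)>0$ at the vector $v$ you construct, and the argument breaks. Put differently: $k+1$ functionals in general position on $\RR^k$ always satisfy a unique relation $\sum_j c_j\chi_j=0$ with every $c_j\neq 0$, but what you need is that all $c_j$ have the same sign, and volume preservation alone does not deliver that.

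The paper circumvents this by a counting argument that exploits the hypothesis $\al\in PH^\infty_{\mathrm{vol}}(k,M)$ rather than the volume: $k+1$ hyperplanes in general position cut $\RR^k$ into exactly $2^{k+1}-2$ Weyl chambers, and since each chamber must contain a partially hyperbolic element (which necessarily has both a positive and a negative exponent on $E_T$), every realized sign pattern is mixed. There are exactly $2^{k+1}-2$ mixed patterns, so \emph{all} of them occur, in particular the pattern with a single $+$ in position $i$ --- precisely the chamber you were trying to exhibit. A secondary issue: your justification that any $k$ of the $\chi_i$ are independent appeals only to the pairwise condition ``no $\ker\chi_\ell$ contains a nontrivial $\ker\chi_i\cap\ker\chi_j$''; for $k\ge 4$ this is strictly weaker than linear independence of any $k$ of the $\chi_i$ (and weaker than what the chamber count needs as well).
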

\begin{proof}(cf. \cite{DX} also) Since $\al$ is maximal, then it has exactly $k+1$ Lyapunov hyperplanes in general position. This implies that obviously there must be at least two Lyapunov hyperspaces, and  that there are exactly $2^{k+1}-2$ Weyl chambers. Since there is no Weyl chamber where all Lyapunov exponents are positive (or all negative), it follows that all combinations of signs appear among Weyl chambers, so for any $\chi_i$ there is a  Weyl chamber $\mathcal{C}$ in which $\chi_i$ is positive while all other non-positively proportional Lyapunov functionals are negative. 

By Definition \ref{def: PHmathrmvolinfty(k, M)} we know  there exists a partially element $b_{\mathcal{C}}$ in $ \mathcal{C}$.  We claim that 
\begin{eqnarray}\label{eqn:Eu=Echi_i}
E^u_{b_{\mathcal{C}}}&=&E_{\chi_i}, \nu_M-a.e. \\\label{eqn: Es=E rest}
E^s_{b_{\mathcal{C}}}&=&\bigoplus_{\lambda\neq c\chi_i, c>0}E_\lambda, \nu_M-a.e. 
\end{eqnarray}
which implies the action is full (see section \ref{subsection: def PH} for precise definition of fullness).

In fact by definition of Lyapunov functional and Lyapunov subspaces in section \ref{subsection: def PH} we know $\bigoplus_{\chi_i}E_{\chi_i}=E_T=E^s_{b_{\mathcal{C}}}\oplus E^u_{b_{\mathcal{C}}}$ Therefore for $\nu_M$ almost every  $x$, 
\begin{eqnarray*}
E_{\chi_i}(x)-\{0\}&=&\{v\in E_T(x), \lim_{n\to-\infty}\log\|D\al(nb_\mathcal{C})\cdot v\|<0 \}=E^u_{b_C}-\{0\}
\end{eqnarray*}
Similarly we could prove that for $\nu_M-$almost every $x$, $$\bigoplus_{\lambda\neq c\chi_i, c>0}E_\lambda(x)-\{0\}=E^s_{b_C}-\{0\}$$ 
Therefore \eqref{eqn:Eu=Echi_i}, \eqref{eqn: Es=E rest} hold.
\end{proof}

Since $\al$ is full, there exists $a\in \ZZ^k$ such that $\al(a)$ is a partially hyperbolic diffeomorphism with central codimension $1+k$ type (cf. \cite{BoThesis}) then by Theorem C of \cite{BoThesis} we know up to a double cover all leaves of the central foliation $W^c$ for $\al$  have trivial holonomy. Therefore $\al$ satisfy assumptions  (A), (B'), (C) in section \ref{subsection: def ABC}. 

If $W^c$ is pathological then (1). of Proposition \ref{prop: restate main intro} holds. If $W^c$ is not pathological, by Theorem \ref{main} we know $W^c$ is an isometric extension over an Anosov action $\bar\al\in PH^\infty_{\mathrm{vol}}(k, N)$ where $N=M/W^c$. Moreover $\bar\al$ also satisfies condition (A). Therefore by Lemma \ref{lemma: assp A+higher rank imply tns} and discussion in section \ref{subsection: case 1 chapter proof KS} we know $\bar\al$ is smoothly conjugated to a linear Anosov action on a torus. And $\al$ is an isometric circle extension over $\bar\al$. But in section \ref{subsection: proof Thm circle ext} we actually proved that any isometric circle extension over linear TNS Anosov action on a torus is essentially algebraic. Then by Lemma \ref{lemma: assp A+higher rank imply tns} we know $\al$ is essentially algebraic which means (2). of Proposition \ref{prop: restate main intro} holds. Therefore we get the proof of Proposition \ref{prop: restate main intro} hence Theorem \ref{main: intro}.

\section{Proof of Theorems \ref{main for single diffeo} and  \ref{main for single diffeo'}}\label{section: proof thm rank-1}
Theorem \ref{main for single diffeo} is basically a corollary of Proposition \ref{prop: single diff, non-pathologic imply smoothness}. Recall that in Theorem \ref{main for single diffeo} $f:M\to M$ is a smooth volume preserving partially hyperbolic diffeomorphism with compact center foliation, moreover $\dim E^u=\dim E^s=1$, then by Theorem A of \cite{BoThesis}, we know that up to a double cover of $f$, $W^c(f)$ has trivial holonomy. 

Suppose $W^c(f)$ is not pathological in the sense that $\nu_M$ has Lebesgue disintegration along $W^c$ and $W^c$ is strongly absolutely continuous in both of $W^{cs}, W^{cu}$, then the center holonomy $h^{c,s}, h^{c,u}$ of $f$ within $W^{cs}$ and $W^{cu}$ respectively are both absolutely continuous. Therefore up to a double cover, $f, f^{-1}$ satisfy all assumptions of Proposition \ref{prop: single diff, non-pathologic imply smoothness}. 

As a result, by Proposition \ref{prop: single diff, non-pathologic imply smoothness} $h^{c,u}$ (or $h^{c,s}$) is uniformly smooth within $W^{cu}$ (or $W^{cs}$) restricted on $W^u_{loc}$ (or $W^s_{loc}$). If $f$ is $r-$bunching, then $W^{cu},W^{cs},W^c$ have $C^{r+}-$leaves. Since $W^{u(s)}$ have smooth leaves, by uniform smoothness of $h^{c,u}$ (or $h^{c,s}$), applying Lemma \ref{lemma: reg of holonomy and foliations},  we get that $W^c$ is a $C^{r+}-$foliation within $W^{cu}$ and $W^{cs}$. Then by Lemma \ref{lemma: Foliation version Journe Lemma} we know that $W^c$ is a $C^{r+}$ foliation of manifold $M$. Then the rest of proof for Theorem \ref{main for single diffeo} is the same as the proof of Theorem \ref{main} (see discussion after Proposition \ref{prop: smoothness of foliations}).

Similarly we can prove Theorem \ref{main for single diffeo'}, by the same argument we can prove that for diffeomorphism $f$ in Theorem \ref{main for single diffeo'}, if $W^c(f)$ is not pathological, then we have $W^c(f)$ is a $C^{r+}$ foliation in $W^{cs}(f)$ (by Proposition \ref{prop: ac implies linearity of center fol}) and $W^{cu}(f)$ (by Proposition \ref{prop: single diff, non-pathologic imply smoothness}) respectively, then the rest of proof for Theorem \ref{main for single diffeo'} is the same as the proof of Theorem \ref{main for single diffeo}.

\end{document}